\documentclass[reqno,12pt]{amsart}

\usepackage{amsmath,amsfonts,amsthm,amscd,upref,amstext}
\usepackage{amssymb}
\usepackage{mathtools}
\usepackage{xcolor}
\usepackage{fullpage}
\usepackage{lmodern}

\theoremstyle{plain}
\newtheorem{prop}{Proposition}[section]
\newtheorem{thm}[prop]{Theorem}
\newtheorem{cor}[prop]{Corollary}
\newtheorem{lem}[prop]{Lemma}

\theoremstyle{definition}
\newtheorem{defn}[prop]{Definition}
\newtheorem{ex}[prop]{Example}

\theoremstyle{remark}
\newtheorem{rem}[prop]{Remark}

\newcommand{\N}{\mathbb N}
\newcommand{\Z}{\mathbb Z}

\DeclareMathOperator{\Min}{Min}

\numberwithin{equation}{section}

\begin{document}

\title{Mixed Multiplicities and Rees Theorems for Graded Families and Their Converses}
\date{}

\author{T. H. Freitas}
\address{Universidade Tecnol\'ogica Federal do Paran\'a, 85053-525, Guarapuava-PR, Brazil}
\email{freitas.thf@gmail.com}

\author{V. H. Jorge P\'erez}
\address{Universidade de S{\~a}o Paulo -- ICMC, 13560-970, S{\~a}o Carlos-SP, Brazil}
\email{vhjperez@icmc.usp.br}

\keywords{graded families of ideals, joint reductions, mixed multiplicities, Rees theorem}
\subjclass[2020]{13A30, 13H15, 13D40}

\begin{abstract}

Let $(R,\mathfrak m)$ be a Noetherian local ring and let
$\mathcal I^{(1)},\ldots,\mathcal I^{(s)}$ be graded
$\mathfrak m$-primary families. We introduce weak asymptotic and strong
homogeneous joint reductions and prove corresponding extensions of
Rees's mixed multiplicity theorem relating joint reductions and mixed
multiplicities. Actually, for weak asymptotic joint reductions, mixed
multiplicities are realized as normalized limits of Hilbert--Samuel
multiplicities, yielding an asymptotic Rees theorem. For multigraded
admissible collections, a strong homogeneous joint reduction
$\{x_{ij}\}$ of type $\mathbf d=(d_1,\ldots,d_s)$, with
$x_{ij}\in I_{a_{ij}}^{(i)}$, satisfies the formula
\[
e((x_{ij});R)
=
\left(\prod_{i,j}a_{ij}\right)
e\!\left(
\mathcal I^{(1)[d_1]},\ldots,
\mathcal I^{(s)[d_s]}
\right).
\]
 We also prove a localized converse under formal equidimensionality,
recovering the classical converse of Rees in the adic case. As an application, we derive 
a B\"oger-type theorem for graded families, whose adic specialization recovers
B\"oger's classical theorem. In contrast, the converse to the weak asymptotic Rees theorem
 fails even for an adic family on a
one-dimensional regular local ring.
\end{abstract}

\maketitle

\section{Introduction}

Mixed multiplicities provide a natural multigraded extension of
Hilbert--Samuel multiplicity and have become an important tool in
commutative algebra and algebraic geometry. Their origins can be traced
to the work of Bhattacharya \cite{Bhattacharya1957}, and the theory was
subsequently developed through the work of Teissier, Rees, Rees--Sharp,
and many others. An interesting fact is that mixed multiplicities have appeared in connection with
equisingularity, integral dependence, Milnor numbers, blow-up algebras,
and mixed volumes of convex bodies. We refer to
\cite{TrungVermaSurvey,SwansonHuneke} for broad accounts of the
classical theory and its applications.

Several authors have developed different aspects of the theory, including
inequalities \cite{ReesSharp1978}, multigraded and extended Rees
algebras \cite{KatzVerma1989}, positivity \cite{Trung2001}, arbitrary
ideals \cite{Viet2000}, and connections with mixed volumes of polytopes
\cite{TrungVerma2007}. These works show that mixed multiplicities encode
both algebraic and geometric information about collections of ideals. A particularly important connection with reduction theory was
established by Rees. In \cite{Rees1984}, he introduced joint reductions
and showed that mixed multiplicities of $\mathfrak m$-primary ideals can
be realized as Hilbert--Samuel multiplicities of ideals generated by
suitable joint reductions. In the notation of
\cite[Chapter~17]{SwansonHuneke}, if $d_1+\cdots+d_s=d$ and a collection
of $d_i$ elements from $J_i$ forms an appropriate joint reduction, then
its generated ideal $Q$ satisfies
\[
e(Q;R)=e(J_1^{[d_1]},\ldots,J_s^{[d_s]};R)
\]
(\cite[Theorem~17.4.9]{SwansonHuneke}). Its converse is more subtle: under
formal equidimensionality, localized mixed-multiplicity equalities,
together with suitable radical and height conditions, characterize
joint reductions (\cite[Theorem~17.6.1]{SwansonHuneke}).

More recently, the theory has been extended to filtrations and graded
families, including non-Noetherian settings. Actually, Cutkosky developed
asymptotic multiplicity theory for graded families
\cite{Cutkosky2013,Cutkosky2014}, while mixed multiplicities for
filtrations and related positivity and divisorial questions were studied
in
\cite{CutkoskySarkarSrinivasan,CutkoskyDivisorial,
CutkoskySrinivasanVerma,CutkoskySarkar2022}.
Also, Cid-Ruiz and Monta\~no subsequently developed mixed multiplicities for
graded families under mild hypotheses and proved a general
volume--multiplicity principle \cite{CidRuizMontano}. In 2026, Cutkosky \cite{Cutkosky2026} introduced a multiplicity for an arbitrary
graded $\mathfrak m$-primary family
$\mathcal I=\{I_n\}_{n\geq0}$ over an arbitrary Noetherian local ring by
showing that, 
\[
e(\mathcal I)
=
\lim_{n\to\infty}\frac{e(I_n)}{n^d}
\]
always exists (\cite[Theorem~1.3]{Cutkosky2026}). More importantly for the present paper, Cutkosky defined mixed
multiplicities of graded $\mathfrak m$-primary families as the
coefficients of a homogeneous polynomial governing their asymptotic
multiplicities (\cite[Theorem~1.6]{Cutkosky2026}). In particular,
\[
e\!\left(
\mathcal I^{(1)[d_1]},\ldots,
\mathcal I^{(s)[d_s]}
\right)
=
\lim_{n\to\infty}
\frac{
e(I_n^{(1)[d_1]},\ldots,I_n^{(s)[d_s]})
}{n^d}
\]
(\cite[Equation~(9)]{Cutkosky2026}). This formula will be used
throughout the paper. This multiplicity theory naturally raises a 
question. Rees's mixed-multiplicity theorem relates mixed
multiplicities of ideals to joint reductions. What should replace joint
reductions when powers of fixed ideals are replaced by arbitrary graded
families? Here, ``Rees theorem'' refers to this joint-reduction
interpretation of mixed multiplicities \cite{Rees1984}, not to the
classical criterion relating equality of multiplicities to integral
dependence.

In order to answer this question, we introduce two notions of joint reduction for
graded families. A \emph{weak asymptotic joint reduction} is defined
levelwise, by choosing classical joint reductions at sufficiently large
levels. A \emph{strong homogeneous joint reduction} is defined in the
multigraded Rees algebra by a fixed homogeneous system controlling all
sufficiently large multidegrees. For weak asymptotic joint reductions, Theorem~\ref{thm:weak-rees}
gives an asymptotic form of Rees's mixed-multiplicity theorem. More
precisely, if $\mathbf d=(d_1,\ldots,d_s)$ satisfies $|\mathbf d|=d$
and $Q_n$ is generated by a weak asymptotic joint reduction of type
$\mathbf d$ at level $n$, then
\[
e\!\left(
\mathcal I^{(1)[d_1]},\ldots,
\mathcal I^{(s)[d_s]}
\right)
=
\lim_{n\to\infty}\frac{e(Q_n;R)}{n^d}.
\]
For adic families this recovers the classical Rees theorem, while for
Noetherian graded families the result reduces to the classical case
after passing to a suitable Veronese. By contrast, the strong theory yields an exact formula.
Theorem~\ref{thm:weighted-strong-rees} proves that, for a multigraded
admissible collection and a strong homogeneous joint reduction
$\{x_{ij}\}$ of type $\mathbf d$, with
$x_{ij}\in I_{a_{ij}}^{(i)}$ and $Q=(x_{ij})$, one has
\[
e(Q;R)
=
\left(
\prod_{i=1}^{s}\prod_{j=1}^{d_i}a_{ij}
\right)
e\!\left(
\mathcal I^{(1)[d_1]},\ldots,
\mathcal I^{(s)[d_s]}
\right).
\]
When the residue field is infinite, admissibility guarantees such
reductions in degree one and, by power stability, in arbitrary
prescribed positive degrees. In the degree-one adic case, this formula
recovers Rees's classical mixed-multiplicity theorem
\cite[Theorem~17.4.9]{SwansonHuneke}. It is important to realize that, Proposition~\ref{prop:weak-not-strong} shows that the weak and strong
notions may differ. For Noetherian graded families, however, both
theories reduce through suitable Veronese subfamilies to the classical
adic setting (see Corollaries~\ref{cor:weak-rees-noetherian} and
\ref{cor:weighted-strong-rees-noetherian}).

To prove a converse to Rees's theorem, the two notions behave quite
differently. Example~\ref{ex:weak-converse} shows that the weak
asymptotic Rees formula has no converse in general, even for an adic
family on a one-dimensional regular local ring. For strong homogeneous joint reductions,
Theorem~\ref{thm:localized-strong-converse} proves a localized converse
under formal equidimensionality. In the adic case,  Corollary~\ref{cor:swanson-full-specialization} recovers the full classical
localized converse of Rees
(\cite[Theorem~17.6.1]{SwansonHuneke}). As an application,
Theorem~\ref{thm:boger-graded-families} gives a B\"oger-type reduction
criterion for admissible graded families in terms of localized mixed
multiplicities \cite{Boger1969}. Its adic specialization,
Corollary~\ref{cor:boger-specialization}, recovers B\"oger's classical
theorem.

The paper is organized as follows.
Section~\ref{sec:preliminaries} recalls the classical theory of joint
reductions and mixed multiplicities, together with the corresponding
multiplicity theory for graded families and multigraded admissibility.
Section~\ref{sec:weak} introduces weak asymptotic joint reductions and
proves the asymptotic Rees theorem, including its adic and Noetherian
specializations.
Section~\ref{sec:strong} develops strong homogeneous joint reductions,
establishes their existence in the admissible setting, and proves the
weighted strong Rees theorem, together with its adic and Noetherian
specializations. We also compare the weak and strong notions.
Finally, Section~\ref{sec:converse} proves the localized converse for
strong homogeneous joint reductions, recovers the classical converse of
Rees in the adic case, and derives a B\"oger-type reduction theorem for
graded families. The failure of the converse in the weak asymptotic
setting is also exhibited.

\section{Preliminaries}
\label{sec:preliminaries}

Throughout, $(R,\mathfrak m)$ denotes a Noetherian local ring of dimension
$d\geq1$. For a finite $R$-module $M$ and an $\mathfrak m$-primary ideal
$I$, we write $e(I;M)$ for the Hilbert--Samuel multiplicity and
$e(I;R)=e(I)$.

\subsection*{Classical joint reductions and mixed multiplicities}

Let $J_1,\ldots,J_q$ be ideals of $R$ and choose $x_i\in J_i$. Following
\cite[Definition~17.1.1]{SwansonHuneke}, $(x_1,\ldots,x_q)$ is a
\emph{joint reduction} of $(J_1,\ldots,J_q)$ if
\(
\sum_{i=1}^{q}x_iJ_1\cdots\widehat{J_i}\cdots J_q
\)
is a reduction of $J_1\cdots J_q$. The corresponding module version is
\cite[Definition~17.1.3]{SwansonHuneke}. Let $I_1,\ldots,I_s$ be $\mathfrak m$-primary ideals and let $M$ be a
finite $R$-module. By \cite[Theorem~17.4.2]{SwansonHuneke}, for all
$n_1,\ldots,n_s\gg0$ the function
$\ell_R(M/I_1^{n_1}\cdots I_s^{n_s}M)$ agrees with a polynomial in
$n_1,\ldots,n_s$. In the convention of
\cite[Definition~17.4.3]{SwansonHuneke}, the homogeneous part of degree
$d=\dim R$ is
\[
\sum_{d_1+\cdots+d_s=d}
\frac{1}{d_1!\cdots d_s!}
 e(I_1^{[d_1]},\ldots,I_s^{[d_s]};M)
 n_1^{d_1}\cdots n_s^{d_s}.
\]
The coefficients are the classical mixed multiplicities. They are zero
when $\dim M<d$. The notation $I_i^{[d_i]}$ means that $I_i$ occurs
$d_i$ times and is unrelated to Frobenius powers.

We will repeatedly use Rees's fundamental theorem. If
$d_1+\cdots+d_s=d$ and the $d$ elements $x_{ij}\in I_i$ form a joint
reduction, with $I_i$ repeated $d_i$ times, then by \cite[Theorem~17.4.9]{SwansonHuneke},
\begin{equation}
\label{eq:classical-rees}
e(I_1^{[d_1]},\ldots,I_s^{[d_s]};M)=e((x_{ij});M).
\end{equation}

\subsection*{Graded families and mixed multiplicities}

A family $\mathcal I=\{I_n\}_{n\geq0}$ is a \emph{graded
$\mathfrak m$-primary family} if $I_0=R$, each $I_n$ is
$\mathfrak m$-primary for $n>0$, and $I_mI_n\subseteq I_{m+n}$ for all
$m,n\geq0$. For such a family, Cutkosky proves that
$\lim_{n\to\infty}e(I_n)/n^d$ exists
\cite[Theorem~1.3]{Cutkosky2026}.

Fix graded $\mathfrak m$-primary families
$\mathcal I^{(1)},\ldots,\mathcal I^{(s)}$, with
$\mathcal I^{(i)}=\{I_n^{(i)}\}_{n\geq0}$, and denote this collection by
$\boldsymbol{\mathcal I}$. For $\mathbf n=(n_1,\ldots,n_s)\in\N^s$, set
$|\mathbf n|=n_1+\cdots+n_s$,
$I_{\mathbf n}=I_{n_1}^{(1)}\cdots I_{n_s}^{(s)}$, and
$\mathbf t^{\mathbf n}=t_1^{n_1}\cdots t_s^{n_s}$. Cutkosky shows in \cite[Theorem~1.6]{Cutkosky2026} that there is a
homogeneous real polynomial $P_{\boldsymbol{\mathcal I}}$ of degree $d$
such that
\[
P_{\boldsymbol{\mathcal I}}(n_1,\ldots,n_s)
=
\lim_{m\to\infty}
\frac{e(I_{mn_1}^{(1)}\cdots I_{mn_s}^{(s)};R)}{m^d}.
\]

\begin{defn}
\label{def:mixed-family}
Following \cite[Theorem~1.6]{Cutkosky2026}, write
\[
P_{\boldsymbol{\mathcal I}}(\mathbf n)
=
\sum_{d_1+\cdots+d_s=d}
\frac{d!}{d_1!\cdots d_s!}
 e\!\left(\mathcal I^{(1)[d_1]},\ldots,
 \mathcal I^{(s)[d_s]}\right)
 n_1^{d_1}\cdots n_s^{d_s}.
\]
The coefficient
$e(\mathcal I^{(1)[d_1]},\ldots,\mathcal I^{(s)[d_s]})$ is the
\emph{mixed multiplicity of type $(d_1,\ldots,d_s)$}.
\end{defn}

Note that  Cutkosky's construction is compatible with the classical mixed
multiplicities at each level. More precisely,
\cite[Equation~(9)]{Cutkosky2026} gives
\begin{equation}
\label{eq:cutkosky-level-limit}
e\!\left(\mathcal I^{(1)[d_1]},\ldots,\mathcal I^{(s)[d_s]}\right)
=
\lim_{n\to\infty}
\frac{e(I_n^{(1)[d_1]},\ldots,I_n^{(s)[d_s]};R)}{n^d}
\end{equation}
for $d_1+\cdots+d_s=d$. In addition, mixed multiplicities with coefficients in a
finite module are constructed in \cite[Theorem~3.4]{Cutkosky2026}.

The following result gives the associativity and additivity formulas for
mixed multiplicities of graded families, extending the corresponding
classical formulas for mixed multiplicities of ideals. 
\begin{lem}
\label{lem:assoc-add}
Let $M$ be a finite $R$-module and let $d_1+\cdots+d_s=d$. Set
$\Lambda=\{P\in\Min(R)\mid\dim(R/P)=d\}$. If $\mathcal I^{(i)}(R/P)=\{(I_n^{(i)}+P)/P\}_{n\geq0}$, then
\[
e\!\left(\mathcal I^{(1)[d_1]},\ldots,\mathcal I^{(s)[d_s]};M\right)
=
\sum_{P\in\Lambda}\ell_{R_P}(M_P)\,
e\!\left(\mathcal I^{(1)}(R/P)^{[d_1]},\ldots,\mathcal I^{(s)}.(R/P)^{[d_s]}\right).
\]
Also, if $0\to M_1\to M_2\to M_3\to0$ is an exact sequence, one has
\[
e(\boldsymbol{\mathcal I}^{[\mathbf d]};M_2)
=
e(\boldsymbol{\mathcal I}^{[\mathbf d]};M_1)
+
e(\boldsymbol{\mathcal I}^{[\mathbf d]};M_3).
\]
\end{lem}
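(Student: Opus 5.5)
We reduce both formulas to their classical counterparts at each level $n$ and then pass to the limit, using the module analogue of \eqref{eq:cutkosky-level-limit}. The first step is to record that the module version of Cutkosky's construction \cite[Theorem~3.4]{Cutkosky2026} satisfies
\[
e\!\left(\mathcal I^{(1)[d_1]},\ldots,\mathcal I^{(s)[d_s]};M\right)
=
\lim_{n\to\infty}
\frac{e(I_n^{(1)[d_1]},\ldots,I_n^{(s)[d_s]};M)}{n^d}.
\]
This is the same level-limit formula as \eqref{eq:cutkosky-level-limit}, now with coefficients in $M$. If it is not stated explicitly in \cite{Cutkosky2026}, we derive it as follows. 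For fixed $n$, the classical mixed multiplicities at level $n$ are the coefficients of the degree-$d$ part of $\ell(M/(I_n^{(1)})^{k_1}\cdots(I_n^{(s)})^{k_s}M)$. Cutkosky's polynomial for $M$ is obtained as a limit of these rescaled polynomials, and coefficients pass to the limit because the space of homogeneous polynomials of degree $d$ is finite-dimensional. With this formula available, every identity among the level-$n$ classical mixed multiplicities that is linear with coefficients independent of $n$ passes to the limit after dividing by $n^d$.

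\textbf{Associativity.} Fix $n\ge1$ and apply the classical associativity formula for mixed multiplicities of the $\mathfrak m$-primary ideals $I_n^{(1)},\ldots,I_n^{(s)}$ on $M$ (see \cite[Theorem~17.4.8]{SwansonHuneke}, or \cite[Ch.~17]{SwansonHuneke} more generally):
\[
e(I_n^{(1)[d_1]},\ldots,I_n^{(s)[d_s]};M)=\sum_{P\in\Lambda}\ell_{R_P}(M_P)\,e\bigl(((I_n^{(1)}+P)/P)^{[d_1]},\ldots,((I_n^{(s)}+P)/P)^{[d_s]};R/P\bigr).
\]
The index set $\Lambda$ and the weights $\ell_{R_P}(M_P)$ do not depend on $n$. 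We therefore divide by $n^d$ and let $n\to\infty$. On the left we use the module level-limit formula. On the right, for each $P\in\Lambda$, the family $\mathcal I^{(i)}(R/P)$ is a graded $\mathfrak m/P$-primary family on the $d$-dimensional local ring $R/P$. Applying \eqref{eq:cutkosky-level-limit} on $R/P$ then gives the corresponding mixed multiplicity of the families.

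\textbf{Additivity.} The argument is identical. For each $n$, the classical mixed multiplicities at level $n$ are additive on short exact sequences. One way to see this is to take the associativity formula just obtained and use the additivity of $\ell_{R_P}(-_P)$ on exact sequences, since localization is exact. Another is to observe directly that the degree-$d$ parts of the Hilbert polynomials are additive by Artin--Rees. In fact, additivity at the level of families follows immediately from the associativity formula for families, since $\ell_{R_P}((M_2)_P)=\ell_{R_P}((M_1)_P)+\ell_{R_P}((M_3)_P)$ for each $P\in\Lambda$.

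\textbf{Main obstacle.} The main obstacle is the first step: justifying the module version of the level-limit formula, and checking that the normalization in Definition~\ref{def:mixed-family} matches the classical normalization in \eqref{eq:classical-rees}. The factor $d!/(d_1!\cdots d_s!)$ in $P_{\boldsymbol{\mathcal I}}$ corresponds to $e(I_{\mathbf n})$, whereas the classical convention uses $1/(d_1!\cdots d_s!)$ for lengths, and $e=d!\cdot(\text{leading coefficient of length})$. Checking that these agree reduces to the polarization identity
\[
e(I_1^{n_1}\cdots I_s^{n_s};M)=\sum \frac{d!}{d_1!\cdots d_s!}\, e(I_1^{[d_1]},\ldots,I_s^{[d_s]};M)\,n^{\mathbf d},
\]
which holds classically. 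Because of this identity, defining the module mixed multiplicities of families via the limits of $e(I_{m\mathbf n};M)/m^d$, as in \cite[Theorem~3.4]{Cutkosky2026}, yields coefficients that are limits of the level-$n$ coefficients. I would first try to quote \cite[Theorem~3.4]{Cutkosky2026} together with \cite[Equation~(9)]{Cutkosky2026} in their module forms. Failing that, I would redo that short argument with $M$ in place of $R$.
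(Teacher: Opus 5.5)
Your route differs from the paper's, and its one load-bearing step is not established. Everything rests on the module form of the level-limit formula,
\[
e\!\left(\mathcal I^{(1)[d_1]},\ldots,\mathcal I^{(s)[d_s]};M\right)=\lim_{n\to\infty}\frac{e(I_n^{(1)[d_1]},\ldots,I_n^{(s)[d_s]};M)}{n^d}.
\]
Your fallback derivation of this formula assumes what it has to prove.

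Cutkosky's polynomial is defined by $P_{\boldsymbol{\mathcal I}}(\mathbf k;M)=\lim_{m}e(I^{(1)}_{mk_1}\cdots I^{(s)}_{mk_s};M)/m^d$. Your rescaled level-$n$ polynomials are instead $\mathbf k\mapsto e((I_n^{(1)})^{k_1}\cdots(I_n^{(s)})^{k_s};M)/n^d$. Only on the diagonal $k_1=\cdots=k_s$ is the convergence of the latter to the former automatic. In general the inclusions $(I_n^{(i)})^{k_i}\subseteq I^{(i)}_{nk_i}$ give only
\[
\liminf_{n\to\infty}\frac{e((I_n^{(1)})^{k_1}\cdots(I_n^{(s)})^{k_s};M)}{n^d}\geq P_{\boldsymbol{\mathcal I}}(\mathbf k;M).
\]
The reverse inequality is an approximation statement, and it is exactly the nontrivial content of \cite[Equation~(9)]{Cutkosky2026}. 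Your finite-dimensionality remark only converts pointwise convergence into convergence of coefficients; it does not supply the pointwise convergence.

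Moreover, the paper quotes Equation~(9) only for $R$. It obtains the module form, Lemma~\ref{lem:module-level-limit}, as a \emph{consequence} of the present lemma. So in the paper's logical order your argument is circular, unless the module statement can actually be found in \cite{Cutkosky2026}.

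The repair is to apply your idea (classical associativity, then a limit) one step earlier: to the limit that \emph{defines} the polynomial, not to the level-$n$ mixed multiplicities.

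For fixed $\mathbf n$, set $I_{m\mathbf n}=I^{(1)}_{mn_1}\cdots I^{(s)}_{mn_s}$, a single $\mathfrak m$-primary ideal. Classical associativity gives $e(I_{m\mathbf n};M)=\sum_{P\in\Lambda}\ell_{R_P}(M_P)\,e((I_{m\mathbf n}+P)/P;R/P)$. Divide by $m^d$ and let $m\to\infty$, using \cite[Theorem~3.4]{Cutkosky2026} on the left and \cite[Theorem~1.6]{Cutkosky2026} on each $R/P$ on the right. This yields
\[
P_{\boldsymbol{\mathcal I}}(\mathbf n;M)=\sum_{P\in\Lambda}\ell_{R_P}(M_P)\,P_{\boldsymbol{\mathcal I}(R/P)}(\mathbf n),
\]
and comparing coefficients of $n_1^{d_1}\cdots n_s^{d_s}$ gives associativity. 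This is the paper's proof, which packages the first two steps as \cite[Corollary~3.2]{Cutkosky2026} applied to the single family $\{I_{m\mathbf n}\}_{m\geq0}$. No level-limit formula is needed.

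Alternatively, you could rescue your own route by deducing the module form of Equation~(9) from the ring form on the idealization $R\ltimes M$. That ring is $d$-dimensional, and both sides of Equation~(9) are additive there: $e(-;R\ltimes M)=e(-;R)+e(-;M)$.

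Your additivity argument (localize at $P\in\Lambda$ in the associativity formula) is the same as the paper's and is fine.
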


\begin{proof}
For a single graded family, Cutkosky's associativity formula with module
coefficients is \cite[Corollary~3.2]{Cutkosky2026}. Applying that formula
to the product family
$\{I_{mn_1}^{(1)}\cdots I_{mn_s}^{(s)}\}_{m\geq0}$ and using
\cite[Theorem~3.4]{Cutkosky2026}, we obtain an equality of homogeneous
polynomials in $n_1,\ldots,n_s$. Comparing the coefficient of
$n_1^{d_1}\cdots n_s^{d_s}$ gives the asserted associativity formula.
For a short exact sequence, localization at $P\in\Lambda$ gives $\ell_{R_P}((M_2)_P)=
\ell_{R_P}((M_1)_P)+\ell_{R_P}((M_3)_P).$
Substituting these equalities into the associativity formula gives
additivity.
\end{proof}

\subsection*{Multigraded admissibility}

Set $\mathcal F(\mathbf n)=I_{n_1}^{(1)}\cdots I_{n_s}^{(s)}$ for
$\mathbf n\in\N^s$. We use the standard notion of admissibility for
multigraded filtrations.

\begin{defn}
\label{def:admissible}
Let $\mathbf J=(J_1,\ldots,J_s)$ be $\mathfrak m$-primary ideals. We
say that $\boldsymbol{\mathcal I}$ is \emph{multigraded admissible with
respect to $\mathbf J$} if $\mathcal F$ extends to a
$\Z^s$-graded $\mathbf J$-admissible filtration. Explicitly, following
\cite[Definitions~1.3 and 1.4]{SarkarVerma}, we require, for all
$\mathbf m,\mathbf n\in\Z^s$,
\begin{enumerate}
\item $\mathbf J^{\mathbf n}\subseteq\mathcal F(\mathbf n)$;
\item $\mathcal F(\mathbf n)\mathcal F(\mathbf m)
      \subseteq\mathcal F(\mathbf n+\mathbf m)$;
\item if $\mathbf m\geq\mathbf n$, then
      $\mathcal F(\mathbf m)\subseteq\mathcal F(\mathbf n)$;
\item $\mathcal F(\mathbf n)=\mathcal F(\mathbf n^+)$, where
      $\mathbf n^+=(\max\{n_1,0\},\ldots,\max\{n_s,0\})$;
\item the extended Rees algebra $\mathcal R'(\mathcal F)
=
\bigoplus_{\mathbf n\in\Z^s}\mathcal F(\mathbf n)\mathbf t^{\mathbf n}$
is a finite module over
$\mathcal R'(\mathbf J)
=R[J_1t_1,\ldots,J_st_s,t_1^{-1},\ldots,t_s^{-1}]$.
\end{enumerate}
When the particular choice of the admissible base is irrelevant, we say
simply that $\boldsymbol{\mathcal I}$ is \emph{multigraded admissible}
if it is multigraded admissible with respect to some tuple
$\mathbf J=(J_1,\ldots,J_s)$ of $\mathfrak m$-primary ideals.
\end{defn}

It is important to realize that for an admissible filtration, the ordinary multigraded Rees algebra $\mathcal R(\boldsymbol{\mathcal I})
=
\mathcal R(\mathcal F)
=
\bigoplus_{\mathbf n\in\N^s}
\mathcal F(\mathbf n)\mathbf t^{\mathbf n}$
is a finite module over the standard multigraded algebra
$\mathcal R(\mathbf J)=R[J_1t_1,\ldots,J_st_s]$ by
\cite[Proposition~2.5]{MasutiSarkarVerma}. In particular,
$\mathcal R(\boldsymbol{\mathcal I})$ is Noetherian.

\section{Weak Joint Reductions and an Asymptotic Rees Theorem}
\label{sec:weak}

We begin by introducing the first generalized notion of joint reduction
for graded families. It is defined levelwise, by requiring classical
joint reductions at all sufficiently large levels.

\begin{defn}[Weak asymptotic joint reduction]
\label{def:weak}
Let $M$ be a finite $R$-module and let
$\mathbf r=(r_1,\ldots,r_s)\in\N^s$. A
\emph{weak asymptotic joint reduction of type $\mathbf r$ with respect
to $M$} is a collection
$\mathbf x_\bullet=\{\mathbf x(n)\}_{n\geq n_0}$, where
\[
\mathbf x(n)=
\{x_{ij}^{(n)}\mid 1\leq i\leq s,\ 1\leq j\leq r_i\},
\qquad
x_{ij}^{(n)}\in I_n^{(i)},
\]
such that, for every $n\geq n_0$, the tuple $\mathbf x(n)$ is a
classical joint reduction with respect to $M$ of
\[
\bigl(
\underbrace{I_n^{(1)},\ldots,I_n^{(1)}}_{r_1},
\ldots,
\underbrace{I_n^{(s)},\ldots,I_n^{(s)}}_{r_s}
\bigr).
\]
When $M=R$ we omit ``with respect to $M$''.
\end{defn}

The first question is whether weak asymptotic joint reductions actually
exist. Over an infinite residue field, the answer is affirmative.

\begin{thm}
\label{thm:weak-existence}
Assume that $R/\mathfrak m$ is infinite and let
$\mathbf r\in\N^s$ satisfy $|\mathbf r|=d$. Then
$\boldsymbol{\mathcal I}$ admits a weak asymptotic joint reduction of
type $\mathbf r$. 
\end{thm}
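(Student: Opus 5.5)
The statement is levelwise, so the plan is to fix an arbitrary level $n\geq1$, produce a classical joint reduction at that level by the classical existence theorem, and then take $n_0=1$.

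Fix $n\geq1$. By the definition of a graded $\mathfrak m$-primary family, each $I_n^{(i)}$ is $\mathfrak m$-primary. Consider the tuple of $d=|\mathbf r|$ ideals
\[
\bigl(
\underbrace{I_n^{(1)},\ldots,I_n^{(1)}}_{r_1},
\ldots,
\underbrace{I_n^{(s)},\ldots,I_n^{(s)}}_{r_s}
\bigr).
\]
Since $R/\mathfrak m$ is infinite, the classical existence theorem for joint reductions applies to it. For $d=\dim R$ ideals, each $\mathfrak m$-primary, this is \cite[Theorem~17.1.4 / Proposition~17.2.2]{SwansonHuneke}. It produces elements $x_{ij}^{(n)}\in I_n^{(i)}$, for $1\leq i\leq s$ and $1\leq j\leq r_i$, forming a joint reduction of the tuple. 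Ideals with zero entries ($r_i=0$) simply do not appear, so nothing extra is needed there.

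Setting $\mathbf x(n)=\{x_{ij}^{(n)}\}$ for every $n\geq n_0:=1$ gives a collection satisfying Definition~\ref{def:weak} with $M=R$. The definition imposes no compatibility between different levels, so these independent levelwise choices are legitimate.

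The only point needing care is the hypothesis of the classical existence result. In Swanson--Huneke, existence of joint reductions over an infinite residue field is proved by choosing superficial-type elements via prime avoidance in the multigraded fiber ring $\bigoplus J^{\mathbf n}/\mathfrak m J^{\mathbf n}$; the proof uses $q\geq\dim R$ ideals, with equality allowed. Here $q=d$ exactly, so the result applies directly.

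If one wants the statement with module coefficients, the same argument works using the module version \cite[Definition~17.1.3]{SwansonHuneke} and the module existence theorem, with $\dim M\leq d$. In that case one should check that the module version of the existence statement is available for $q=d$ ideals; this is the step I would verify first.
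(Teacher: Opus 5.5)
Your proposal is correct and follows essentially the same route as the paper: fix a level $n\geq1$, list $I_n^{(i)}$ exactly $r_i$ times, apply the classical existence theorem for joint reductions over an infinite residue field, and make independent choices at each level. The only adjustment is the citation. The paper uses \cite[Theorem~17.3.1]{SwansonHuneke}, whose radical hypotheses $J_\ell\subseteq\sqrt{J_{\ell+1}\cdots J_d}$ hold automatically here because every listed ideal is $\mathfrak m$-primary, so that radical equals $\mathfrak m$.
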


\begin{proof}
Fix $n\geq1$ and list $I_n^{(i)}$ exactly $r_i$ times. This gives $d$
$\mathfrak m$-primary ideals $J_1,\ldots,J_d$. Hence
$J_\ell\subseteq\sqrt{J_{\ell+1}\cdots J_d}=\mathfrak m$ for
$\ell<d$. The theorem
\cite[Theorem~17.3.1]{SwansonHuneke} gives elements
$y_\ell\in J_\ell$ forming a joint reduction. Regrouping the $y_\ell$
according to the repeated ideals gives the required
$x_{ij}^{(n)}$. Since $n$ was arbitrary, the choices form a weak
asymptotic joint reduction.
\end{proof}

We now pass from the ring case to finite modules and record the
corresponding levelwise asymptotic realization of mixed multiplicities.

\begin{lem}[Asymptotic realization with coefficients]
\label{lem:module-level-limit}
Let $M$ be a finite $R$-module with $\dim M=d$ and
$d_1+\cdots+d_s=d$. Then
\[
e\!\left(\mathcal I^{(1)[d_1]},\ldots,
\mathcal I^{(s)[d_s]};M\right)
=
\lim_{n\to\infty}
\frac{
e(I_n^{(1)[d_1]},\ldots,I_n^{(s)[d_s]};M)
}{n^d}.
\]
\end{lem}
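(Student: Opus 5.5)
The plan is to reduce to the ring case, equation~\eqref{eq:cutkosky-level-limit}, by applying the associativity formula on both sides of the asserted equality. Throughout, set $\Lambda=\{P\in\Min(R)\mid\dim(R/P)=d\}$ as in Lemma~\ref{lem:assoc-add}.

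\emph{Left-hand side.} Lemma~\ref{lem:assoc-add} gives
\[
e\!\left(\mathcal I^{(1)[d_1]},\ldots,\mathcal I^{(s)[d_s]};M\right)
=\sum_{P\in\Lambda}\ell_{R_P}(M_P)\,
e\!\left(\mathcal I^{(1)}(R/P)^{[d_1]},\ldots,\mathcal I^{(s)}(R/P)^{[d_s]}\right).
\]

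\emph{Right-hand side, levelwise.} For each fixed $n$, apply the classical associativity formula for mixed multiplicities of $\mathfrak m$-primary ideals with module coefficients (see \cite[Chapter~17]{SwansonHuneke}):
\[
e(I_n^{(1)[d_1]},\ldots,I_n^{(s)[d_s]};M)
=\sum_{P\in\Lambda}\ell_{R_P}(M_P)\,
e\bigl(\overline{I_n^{(1)}}^{[d_1]},\ldots,\overline{I_n^{(s)}}^{[d_s]};R/P\bigr),
\]
where the bar denotes the image in $R/P$. Each domain $R/P$ with $P\in\Lambda$ is a Noetherian local ring of dimension $d$. The families $\mathcal I^{(i)}(R/P)$ are graded $\mathfrak m$-primary families in $R/P$: the image of $I_0^{(i)}$ is $R/P$, multiplicativity passes to images, and the image of an $\mathfrak m$-primary ideal is $\mathfrak m/P$-primary.

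\emph{Passing to the limit.} Equation~\eqref{eq:cutkosky-level-limit}, applied over $R/P$, gives
\[
\lim_{n\to\infty}\frac{e\bigl(\overline{I_n^{(1)}}^{[d_1]},\ldots,\overline{I_n^{(s)}}^{[d_s]};R/P\bigr)}{n^d}
=e\!\left(\mathcal I^{(1)}(R/P)^{[d_1]},\ldots,\mathcal I^{(s)}(R/P)^{[d_s]}\right).
\]
Divide the levelwise formula by $n^d$ and let $n\to\infty$. Because $\Lambda$ is finite and the coefficients $\ell_{R_P}(M_P)$ do not depend on $n$, the limit passes through the sum. The resulting expression matches the left-hand side above, which proves the lemma. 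The hypothesis $\dim M=d$ is only needed so that both sides are meaningful and not trivially zero; the argument itself does not use it.

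The main point to check is that the two associativity formulas are indexed the same way. The classical formula is usually stated with sum over primes $P$ with $\dim R/P=d$, either in $\operatorname{Supp} M$ or in $\Min(R)$. These give the same sum: primes with $\dim R/P=d$ are automatically minimal in $R$, and those outside the support contribute $\ell_{R_P}(M_P)=0$. So both expansions run over the same index set with identical coefficients.

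One could instead try to apply \cite[Theorem~3.4]{Cutkosky2026} directly to $M$. That route would require comparing $e(I_{\mathbf n};M)$ with mixed multiplicities at level $n$, which is more delicate. The reduction to domains above avoids this.
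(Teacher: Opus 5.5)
Your proof is correct and takes the same route as the paper. You expand the left side by Lemma~\ref{lem:assoc-add}, expand each level by the classical associativity formula \cite[Lemma~17.4.4]{SwansonHuneke}, apply \eqref{eq:cutkosky-level-limit} over each $R/P$ with $P\in\Lambda$, and pass the limit through the finite sum; your extra checks (the image families are graded $\mathfrak m/P$-primary, and the two index sets agree) make explicit what the paper's short proof leaves implicit.
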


\begin{proof}
By Lemma~\ref{lem:assoc-add}, the mixed multiplicity on $M$ is a finite
sum over the maximal-dimensional minimal components of $R$. On each
component, \cite[Equation~(9)]{Cutkosky2026} gives the asserted
normalized limit. Passing through the finite sum and using the classical
associativity formula for mixed multiplicities
\cite[Lemma~17.4.4]{SwansonHuneke} yields the desired  formula.
\end{proof}

\begin{thm}[Asymptotic Rees theorem]
\label{thm:weak-rees}
Let $M$ be a finite $R$-module with $\dim M=d$, let
$\mathbf d=(d_1,\ldots,d_s)\in\N^s$ satisfy $|\mathbf d|=d$, and let
$\mathbf x_\bullet=\{\mathbf x(n)\}_{n\gg0}$ be a weak asymptotic joint
reduction of type $\mathbf d$ with respect to $M$. Put $Q_n=(x_{ij}^{(n)}\mid 1\leq i\leq s,\ 1\leq j\leq d_i).$
Then
\[
e\!\left(\mathcal I^{(1)[d_1]},\ldots,
\mathcal I^{(s)[d_s]};M\right)
=
\lim_{n\to\infty}\frac{e(Q_n;M)}{n^d}.
\]
\end{thm}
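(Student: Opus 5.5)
The plan is to reduce the statement to two results already available: the classical Rees theorem \eqref{eq:classical-rees} applied level by level, and the asymptotic realization of Lemma~\ref{lem:module-level-limit}.

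First fix $n\geq n_0$. By Definition~\ref{def:weak}, $\mathbf x(n)$ is a classical joint reduction with respect to $M$ of the tuple in which $I_n^{(i)}$ is repeated $d_i$ times. Each $I_n^{(i)}$ is $\mathfrak m$-primary for $n>0$, and $d_1+\cdots+d_s=d$. So the module version of Rees's theorem, \cite[Theorem~17.4.9]{SwansonHuneke} in the form \eqref{eq:classical-rees} with coefficients in $M$, applies with $I_i=I_n^{(i)}$ and $x_{ij}=x_{ij}^{(n)}$. It gives
\[
e(Q_n;M)=e\bigl(I_n^{(1)[d_1]},\ldots,I_n^{(s)[d_s]};M\bigr)
\qquad\text{for all } n\geq n_0 .
\]
Two points in this step need checking. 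First, the joint reduction hypothesis in the module sense is exactly what Definition~\ref{def:weak} supplies. Second, the mixed multiplicity conventions for $M$ match: they are the degree-$d$ coefficients normalized with respect to $d=\dim R$. Since $\dim M=d$, no degenerate vanishing case interferes.

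Next, divide both sides by $n^d$ and let $n\to\infty$. Lemma~\ref{lem:module-level-limit} says that the right-hand side divided by $n^d$ converges to $e(\mathcal I^{(1)[d_1]},\ldots,\mathcal I^{(s)[d_s]};M)$. Because the two sequences agree for every $n\geq n_0$, the limit of $e(Q_n;M)/n^d$ exists and equals the same value, which is the assertion. Note that the weak reduction is only required for $n\gg0$, and the limit ignores finitely many terms, so the starting index $n_0$ causes no issue.

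The main obstacle is not the limit argument, which is formal. It is making sure that the levelwise identity really holds for modules with the exact normalization used by Cutkosky's mixed multiplicities with coefficients in $M$ (\cite[Theorem~3.4]{Cutkosky2026}), so that Lemma~\ref{lem:module-level-limit} and \eqref{eq:classical-rees} refer to the same numbers. I would handle this by invoking \cite[Theorem~17.4.9]{SwansonHuneke} directly in its module form, where the equality $e(I_1^{[d_1]},\ldots,I_s^{[d_s]};M)=e((x_{ij});M)$ holds for $\mathfrak m$-primary ideals and a joint reduction with respect to $M$. If needed, I would cross-check it through associativity (Lemma~\ref{lem:assoc-add} together with \cite[Lemma~17.4.4]{SwansonHuneke}): both sides decompose over $P\in\Lambda$ with the same weights $\ell_{R_P}(M_P)$.
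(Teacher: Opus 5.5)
Your proposal is correct and matches the paper's proof: apply \cite[Theorem~17.4.9]{SwansonHuneke} with coefficients in $M$ at each level $n\gg0$ to get $e(Q_n;M)=e(I_n^{(1)[d_1]},\ldots,I_n^{(s)[d_s]};M)$, then divide by $n^d$ and pass to the limit using Lemma~\ref{lem:module-level-limit}. The normalization concern you flag is already covered by Lemma~\ref{lem:module-level-limit}, so the extra cross-check through associativity is not needed.
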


\begin{proof}
For every sufficiently large $n$,
$e(I_n^{(1)[d_1]},\ldots,I_n^{(s)[d_s]};M)=e(Q_n;M)$ by  Rees's theorem
\cite[Theorem~17.4.9]{SwansonHuneke}.
Divide by $n^d$ and pass to the limit using
Lemma~\ref{lem:module-level-limit}.
\end{proof}

\begin{cor}
\label{cor:weak-realization}
If $R/\mathfrak m$ is infinite, then for every
$\mathbf d\in\N^s$ with $|\mathbf d|=d$ there are $d$-generated ideals
$Q_n$ such that
\[
e\!\left(\mathcal I^{(1)[d_1]},\ldots,
\mathcal I^{(s)[d_s]}\right)
=
\lim_{n\to\infty}\frac{e(Q_n;R)}{n^d}.
\]
\end{cor}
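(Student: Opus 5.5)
The plan is to combine the existence result Theorem~\ref{thm:weak-existence} with the asymptotic Rees theorem, Theorem~\ref{thm:weak-rees}, specialized to $M=R$.

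First, since $R/\mathfrak m$ is infinite and $|\mathbf d|=d$, Theorem~\ref{thm:weak-existence} with $\mathbf r=\mathbf d$ provides a weak asymptotic joint reduction $\mathbf x_\bullet=\{\mathbf x(n)\}_{n\geq n_0}$ of type $\mathbf d$. This means that for each $n\geq n_0$ we have elements $x_{ij}^{(n)}\in I_n^{(i)}$, with $1\leq j\leq d_i$, forming a classical joint reduction of the $d$ ideals obtained by repeating $I_n^{(i)}$ exactly $d_i$ times.

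For $n\geq n_0$ we set $Q_n=(x_{ij}^{(n)})$. This ideal is generated by $\sum_i d_i=d$ elements, so it is $d$-generated. For the finitely many $n<n_0$ the choice of $Q_n$ is irrelevant to the limit. For definiteness we take $Q_n$ to be any $d$-generated $\mathfrak m$-primary ideal, for example one generated by a system of parameters.

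Next we apply Theorem~\ref{thm:weak-rees} with $M=R$. The hypothesis $\dim M=d$ holds because $\dim R=d$, and this gives
\[
e\!\left(\mathcal I^{(1)[d_1]},\ldots,\mathcal I^{(s)[d_s]}\right)=\lim_{n\to\infty}\frac{e(Q_n;R)}{n^d}.
\]

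No step is a genuine obstacle; the argument is bookkeeping. The only points to check are that the generators count to exactly $d$ and that the finitely many initial levels do not affect the limit.
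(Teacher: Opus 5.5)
Your proposal is correct and is exactly the paper's argument: the corollary follows by taking the weak asymptotic joint reduction from Theorem~\ref{thm:weak-existence} with $\mathbf r=\mathbf d$ and applying Theorem~\ref{thm:weak-rees} with $M=R$. Your padding of the finitely many levels $n<n_0$ is harmless. It is also unnecessary, since the existence proof produces a joint reduction at every level $n\geq1$.
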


\begin{proof}
The result is a consequence of  Theorems~\ref{thm:weak-existence} and~\ref{thm:weak-rees}.
\end{proof}

\subsection*{Adic and Noetherian specializations}

We first record the specialization to adic families. In this case the
mixed multiplicities of graded families recover the classical mixed
multiplicities, and Rees's theorem
\cite[Theorem~17.4.9]{SwansonHuneke} applies at every level.

\begin{cor}
\label{cor:weak-rees-adic}
Let $(R,\mathfrak m)$ be a $d$-dimensional Noetherian local ring, let
$M$ be a finite $R$-module with $\dim M=d$, and let
$J_1,\ldots,J_s$ be $\mathfrak m$-primary ideals. For each $i$, consider
the $J_i$-adic family $\mathcal I^{(i)}
=
\{J_i^n\}_{n\geq0}.$
Let $\mathbf d=(d_1,\ldots,d_s)\in\mathbb N^s$ satisfy
$d_1+\cdots+d_s=d$, and let
$\mathbf x_\bullet=\{\mathbf x(n)\}_{n\gg0}$ be a weak asymptotic joint
reduction of type $\mathbf d$ with respect to $M$. Set $Q_n
=
(x_{ij}^{(n)}
\mid
1\leq i\leq s,\;
1\leq j\leq d_i).$
Then, 
\[
e\!\left(
\mathcal I^{(1)[d_1]},\ldots,
\mathcal I^{(s)[d_s]};M
\right)
=
e\!\left(
J_1^{[d_1]},\ldots,J_s^{[d_s]};M
\right).
\]
\end{cor}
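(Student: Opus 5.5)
The plan is to compute both sides of the claimed equality through the weak asymptotic joint reduction and compare levelwise. Put $J_i$ in place of $I_n^{(i)}$ via $I_n^{(i)}=J_i^n$. For every $n\gg0$ the tuple $\mathbf x(n)$ is a classical joint reduction of $(J_1^n,\ldots,J_1^n,\ldots,J_s^n,\ldots,J_s^n)$ with respect to $M$. By Rees's theorem \eqref{eq:classical-rees},
\[
e(Q_n;M)=e\bigl((J_1^n)^{[d_1]},\ldots,(J_s^n)^{[d_s]};M\bigr).
\]
Theorem~\ref{thm:weak-rees} then identifies the left side of the corollary with $\lim_{n}e(Q_n;M)/n^d$. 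It therefore suffices to show
\[
e\bigl((J_1^n)^{[d_1]},\ldots,(J_s^n)^{[d_s]};M\bigr)=n^d\,e(J_1^{[d_1]},\ldots,J_s^{[d_s]};M)
\]
for every $n\ge1$. Once this holds, the sequence $e(Q_n;M)/n^d$ is constant and its limit is the classical mixed multiplicity.

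The scaling identity is a direct computation with the Bhattacharya polynomial. The function
\[
\ell_R\bigl(M/(J_1^n)^{m_1}\cdots(J_s^n)^{m_s}M\bigr)
\]
is the classical function $\ell_R(M/J_1^{nm_1}\cdots J_s^{nm_s}M)$ evaluated at $(nm_1,\ldots,nm_s)$. For $m_i\gg0$ it therefore equals the polynomial $p(nm_1,\ldots,nm_s)$, where $p$ is the polynomial attached to $(J_1,\ldots,J_s)$. Its degree-$d$ homogeneous part is
\[
\sum \frac{1}{d_1!\cdots d_s!}\,e(J_1^{[d_1]},\ldots;M)\,n^{d}\,m_1^{d_1}\cdots m_s^{d_s}.
\]
Comparing coefficients in the convention of \cite[Definition~17.4.3]{SwansonHuneke} gives the factor $n^d$.

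Alternatively, one can bypass Theorem~\ref{thm:weak-rees} and work directly with Lemma~\ref{lem:module-level-limit}, which gives the same limit. There is no real obstacle. The only point needing care is that the degree-$d$ homogeneous part of $p(n\mathbf m)$ is exactly $n^d$ times the degree-$d$ part of $p(\mathbf m)$. The lower-degree terms of $p(n\mathbf m)$ scale by lower powers of $n$, so they do not contribute to the top-degree coefficients.
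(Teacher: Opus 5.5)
Your proposal is correct and follows the paper's proof essentially verbatim: Rees's theorem at each level $n\gg0$, then the scaling identity $e\bigl((J_1^n)^{[d_1]},\ldots,(J_s^n)^{[d_s]};M\bigr)=n^d\,e(J_1^{[d_1]},\ldots,J_s^{[d_s]};M)$, then Theorem~\ref{thm:weak-rees} to pass to the limit. Your Bhattacharya-polynomial computation just unpacks the homogeneity that the paper cites from \cite[Theorem~17.4.2 and Definition~17.4.3]{SwansonHuneke}, and your remark is also valid that Lemma~\ref{lem:module-level-limit} alone already gives the stated conclusion, without using the joint reduction at all.
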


\begin{proof}
For every sufficiently large $n$, the tuple $\mathbf x(n)$ is a
classical joint reduction, with respect to $M$, of the $d$ ideals
consisting of $J_i^n$ repeated $d_i$ times. Therefore
\cite[Theorem~17.4.9]{SwansonHuneke} gives
\[
e(Q_n;M)
=
e\!\left(
(J_1^n)^{[d_1]},\ldots,
(J_s^n)^{[d_s]};M
\right).
\]
By the homogeneity of classical mixed multiplicities,
\cite[Theorem~17.4.2 and Definition~17.4.3]{SwansonHuneke},
\[
e\!\left(
(J_1^n)^{[d_1]},\ldots,
(J_s^n)^{[d_s]};M
\right)
=
n^{d_1+\cdots+d_s}
e\!\left(
J_1^{[d_1]},\ldots,J_s^{[d_s]};M
\right).
\]
Since $d_1+\cdots+d_s=d$, we derive $e(Q_n;M)
=
n^d
e\!\left(
J_1^{[d_1]},\ldots,J_s^{[d_s]};M
\right).$
Dividing by $n^d$ gives the second assertion. Passing to the limit and
using Theorem~\ref{thm:weak-rees}, we obtain the result.
\end{proof}


We next specialize to Noetherian graded families. A sufficiently
divisible common Veronese is standard
\cite[Theorem~2.1]{HerzogHibiTrung}, so the families become adic along
that Veronese. Thus Theorem~\ref{thm:weak-rees} reduces there to the
classical Rees theorem.

\begin{cor}
\label{cor:weak-rees-noetherian}
Let $(R,\mathfrak m)$ be a $d$-dimensional Noetherian local ring and let
$M$ be a finite $R$-module with $\dim M=d$. Let $\boldsymbol{\mathcal I}
=
(\mathcal I^{(1)},\ldots,\mathcal I^{(s)})$
be graded $\mathfrak m$-primary families, and assume that each
$\mathcal I^{(i)}$ is Noetherian.
Let $\mathbf d=(d_1,\ldots,d_s)\in\mathbb N^s$ satisfy
$d_1+\cdots+d_s=d$, and let
$\mathbf x_\bullet=\{\mathbf x(n)\}_{n\gg0}$ be a weak asymptotic joint
reduction of type $\mathbf d$ with respect to $M$. Set $Q_n
=
(x_{ij}^{(n)}
\mid
1\leq i\leq s,\;
1\leq j\leq d_i).$
Then there exists an integer $c\geq1$ such that, for every
$1\leq i\leq s$ and every $m\geq0$, $I_{cm}^{(i)}
=
\bigl(I_c^{(i)}\bigr)^m.$
For every sufficiently large $m$ one has
\[
e(Q_{cm};M)
=
m^d
e\!\left(
I_c^{(1)[d_1]},\ldots,
I_c^{(s)[d_s]};M
\right).
\]
Consequently,
\[
e\!\left(
\mathcal I^{(1)[d_1]},\ldots,
\mathcal I^{(s)[d_s]};M
\right)
=
\frac{1}{c^d}
e\!\left(
I_c^{(1)[d_1]},\ldots,
I_c^{(s)[d_s]};M
\right),
\]
and, more precisely, for every sufficiently large $m$, one has
\[
\frac{e(Q_{cm};M)}{(cm)^d}
=
e\!\left(
\mathcal I^{(1)[d_1]},\ldots,
\mathcal I^{(s)[d_s]};M
\right).
\]

\end{cor}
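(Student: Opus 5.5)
The plan has three steps: pass to a common standard Veronese, apply Rees's theorem at the levels $cm$, and then take the limit from Theorem~\ref{thm:weak-rees} along the subsequence $n=cm$.

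\emph{Step 1: the Veronese.} Each $\mathcal I^{(i)}$ is Noetherian, so its Rees algebra $\bigoplus_n I_n^{(i)}t^n$ is finitely generated over $R$. By \cite[Theorem~2.1]{HerzogHibiTrung} there is $c_i\geq1$ with $I^{(i)}_{c_im}=(I^{(i)}_{c_i})^m$ for all $m\geq0$. Take $c$ to be a common multiple of $c_1,\ldots,c_s$ (for instance the product). If $c=c_ik$, then
\[
I^{(i)}_{cm}=(I^{(i)}_{c_i})^{km}=\bigl((I^{(i)}_{c_i})^{k}\bigr)^m=(I^{(i)}_c)^m .
\]
This gives the first assertion.

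\emph{Step 2: the exact formula at levels $cm$.} For $m$ large, $cm\geq n_0$, so $\mathbf x(cm)$ is a classical joint reduction with respect to $M$ of $I^{(i)}_{cm}=(I_c^{(i)})^m$, each repeated $d_i$ times. Rees's theorem (\ref{eq:classical-rees}) gives
\[
e(Q_{cm};M)=e\bigl(((I_c^{(1)})^m)^{[d_1]},\ldots,((I_c^{(s)})^m)^{[d_s]};M\bigr).
\]
Homogeneity of classical mixed multiplicities, used as in Corollary~\ref{cor:weak-rees-adic}, turns the right side into $m^{d}\,e(I_c^{(1)[d_1]},\ldots,I_c^{(s)[d_s]};M)$.

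\emph{Step 3: the limit.} Theorem~\ref{thm:weak-rees} says $e(Q_n;M)/n^d$ converges to $e(\mathcal I^{(1)[d_1]},\ldots,\mathcal I^{(s)[d_s]};M)$. Hence it has the same limit along $n=cm$. By Step 2,
\[
\frac{e(Q_{cm};M)}{(cm)^d}=\frac{1}{c^d}\,e\bigl(I_c^{(1)[d_1]},\ldots,I_c^{(s)[d_s]};M\bigr)
\]
for all large $m$. A sequence that is eventually constant has that constant as its limit, so the graded-family mixed multiplicity equals $c^{-d}e(I_c^{(1)[d_1]},\ldots;M)$. This is the displayed formula, and it also yields the final assertion that $e(Q_{cm};M)/(cm)^d$ equals the mixed multiplicity exactly for all large $m$.

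The main point needing care is Step 1: the cited theorem gives a standard Veronese for each family separately, and one must check that passing to a common multiple preserves the property $I_{cm}=(I_c)^m$, which the computation above does. Everything after that is bookkeeping: the range $m\gg0$ in Step 2 is handled by requiring $cm\geq n_0$, and Step 3 only uses that a subsequence of a convergent sequence has the same limit.
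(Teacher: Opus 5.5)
Your proposal is correct and follows the paper's proof essentially step for step: a common standard Veronese via \cite[Theorem~2.1]{HerzogHibiTrung}, then Rees's theorem plus homogeneity at the levels $cm$, then the limit from Theorem~\ref{thm:weak-rees} along the subsequence $n=cm$. Your explicit check that any common multiple $c$ of the $c_i$ still satisfies $I_{cm}^{(i)}=(I_c^{(i)})^m$ is slightly more careful than the paper, which only says to choose $c$ ``sufficiently divisible.''
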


\begin{proof}
For each $i$, the Noetherianity of $\mathcal I^{(i)}$ means that $\mathcal R(\mathcal I^{(i)})
=
\bigoplus_{n\geq0}I_n^{(i)}t^n$
is a finitely generated positively graded $R$-algebra. By
\cite[Theorem~2.1]{HerzogHibiTrung}, there exists a positive integer
$c_i$ such that the $c_i$-th Veronese subalgebra $\mathcal R(\mathcal I^{(i)})^{(c_i)}
=
\bigoplus_{m\geq0}
I_{c_im}^{(i)}t^{c_im}$
is standard after regrading. Equivalently, $I_{c_im}^{(i)}
=
\bigl(I_{c_i}^{(i)}\bigr)^m$
for every $m\geq0$. Choose a common positive multiple $c$ of $c_1,\ldots,c_s$, sufficiently
divisible so that the $c$-th Veronese of each
$\mathcal R(\mathcal I^{(i)})$ is standard. Then $I_{cm}^{(i)}
=
\bigl(I_c^{(i)}\bigr)^m$
for all $i$ and all $m\geq0$.

Now fix $m\gg0$. Since $\mathbf x_\bullet$ is a weak asymptotic joint
reduction, $\mathbf x(cm)$ is a classical joint reduction of the list
in which $I_{cm}^{(i)}$ occurs $d_i$ times. Using the Veronese
identities, this is the list
\[
\underbrace{(I_c^{(1)})^m,\ldots,(I_c^{(1)})^m}_{d_1},
\ldots,
\underbrace{(I_c^{(s)})^m,\ldots,(I_c^{(s)})^m}_{d_s}.
\]
Thus Rees's theorem
\cite[Theorem~17.4.9]{SwansonHuneke} yields
\[
e(Q_{cm};M)
=
e\!\left(
((I_c^{(1)})^m)^{[d_1]},\ldots,
((I_c^{(s)})^m)^{[d_s]};M
\right).
\]
By homogeneity of the classical mixed multiplicities,
\cite[Theorem~17.4.2 and Definition~17.4.3]{SwansonHuneke},
the right-hand side equals $m^{d_1+\cdots+d_s}
e\!\left(
I_c^{(1)[d_1]},\ldots,
I_c^{(s)[d_s]};M
\right).$
Since $d_1+\cdots+d_s=d$, we obtain
\[
e(Q_{cm};M)
=
m^d
e\!\left(
I_c^{(1)[d_1]},\ldots,
I_c^{(s)[d_s]};M
\right).
\]

Dividing by $(cm)^d$ gives
\[
\frac{e(Q_{cm};M)}{(cm)^d}
=
\frac{1}{c^d}
e\!\left(
I_c^{(1)[d_1]},\ldots,
I_c^{(s)[d_s]};M
\right).
\]
On the other hand, Theorem~\ref{thm:weak-rees} provides
\[
e\!\left(
\mathcal I^{(1)[d_1]},\ldots,
\mathcal I^{(s)[d_s]};M
\right)
=
\lim_{n\to\infty}\frac{e(Q_n;M)}{n^d}.
\]
Taking the subsequence $n=cm$ and using the preceding equality gives
\[
e\!\left(
\mathcal I^{(1)[d_1]},\ldots,
\mathcal I^{(s)[d_s]};M
\right)
=
\frac{1}{c^d}
e\!\left(
I_c^{(1)[d_1]},\ldots,
I_c^{(s)[d_s]};M
\right).
\]
Substituting this identity back into the preceding formula proves the
last assertion.
\end{proof}

\section{Strong Homogeneous Joint Reductions and Exact Rees Formulas}
\label{sec:strong}

We now introduce a second notion of joint reduction for graded families.
In contrast with the weak levelwise notion, it is defined directly in
the multigraded Rees algebra and leads to an exact Rees-type multiplicity
formula.

\begin{defn}
\label{def:strong}
Let $\mathbf r=(r_1,\ldots,r_s)\in\N^s$. A \emph{strong homogeneous
joint reduction of type $\mathbf r$} of $\boldsymbol{\mathcal I}$
consists of positive integers $a_{ij}$ and elements
$x_{ij}\in I_{a_{ij}}^{(i)}$ such that, after setting
$X_{ij}=x_{ij}t_i^{a_{ij}}$, one has
\[
\mathcal R(\boldsymbol{\mathcal I})_{\mathbf n}
=
\sum_{i=1}^{s}\sum_{j=1}^{r_i}
X_{ij}\,
\mathcal R(\boldsymbol{\mathcal I})_{\mathbf n-a_{ij}\mathbf e_i}
\qquad(\mathbf n\gg\mathbf0).
\]
Equivalently, for all sufficiently large $\mathbf n$,
\[
I_{n_1}^{(1)}\cdots I_{n_s}^{(s)}
=
\sum_{i=1}^{s}\sum_{j=1}^{r_i}
 x_{ij}
 I_{n_1}^{(1)}\cdots I_{n_i-a_{ij}}^{(i)}\cdots I_{n_s}^{(s)}.
\]

\end{defn}

\begin{prop}
\label{prop:intrinsic}
Let $\mathcal A=\mathcal R(\boldsymbol{\mathcal I})$ and
$\mathcal X=(X_{ij})\mathcal A$, where
$X_{ij}=x_{ij}t_i^{a_{ij}}$. Then $\{x_{ij}\}$ is a strong homogeneous
joint reduction of type $\mathbf r$ if and only if
$(\mathcal A/\mathcal X)_{\mathbf n}=0$ for all
$\mathbf n\gg\mathbf0$.
\end{prop}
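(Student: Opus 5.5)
The plan is to compute the multidegree-$\mathbf n$ component of the ideal $\mathcal X$ directly and compare it with the defining identity in Definition~\ref{def:strong}. Since each $X_{ij}=x_{ij}t_i^{a_{ij}}$ is homogeneous of multidegree $a_{ij}\mathbf e_i$, the ideal $\mathcal X$ is homogeneous. Its component in degree $\mathbf n$ is therefore
\[
\mathcal X_{\mathbf n}=\sum_{i=1}^{s}\sum_{j=1}^{r_i}X_{ij}\,\mathcal A_{\mathbf n-a_{ij}\mathbf e_i}.
\]
Here we use the convention $\mathcal A_{\mathbf m}=0$ whenever $\mathbf m\notin\N^s$, because $\mathcal A$ is the ordinary $\N^s$-graded Rees algebra. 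To justify this formula, write a general element $\sum X_{ij}F_{ij}$ with $F_{ij}\in\mathcal A$ and decompose each $F_{ij}$ into homogeneous components. The degree-$\mathbf n$ part of $X_{ij}F_{ij}$ is then $X_{ij}$ times the degree $\mathbf n-a_{ij}\mathbf e_i$ component of $F_{ij}$. The reverse inclusion is clear.

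With this formula in hand, note that $(\mathcal A/\mathcal X)_{\mathbf n}=\mathcal A_{\mathbf n}/\mathcal X_{\mathbf n}$. So the vanishing of $(\mathcal A/\mathcal X)_{\mathbf n}$ is exactly the equality
\[
\mathcal A_{\mathbf n}=\sum_{i,j}X_{ij}\mathcal A_{\mathbf n-a_{ij}\mathbf e_i}.
\]
This is the first displayed condition of Definition~\ref{def:strong}. Quantifying over all $\mathbf n\gg\mathbf 0$ on both sides gives the equivalence.

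For completeness, I would also check the stated ``equivalently'' form. Identify $\mathcal A_{\mathbf n}=I_{n_1}^{(1)}\cdots I_{n_s}^{(s)}\mathbf t^{\mathbf n}$ and cancel the monomial $\mathbf t^{\mathbf n}$. One point needs care: for $\mathbf n\gg\mathbf0$ we may assume $n_i\geq a_{ij}$ for all $i,j$, since there are only finitely many $a_{ij}$. Then every shifted index lies in $\N^s$, and no boundary terms with negative indices (which would otherwise be zero) interfere.

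The only step that needs any care is the description of the graded components of $\mathcal X$ together with this boundary convention. Both are routine, so I expect no real obstacle.
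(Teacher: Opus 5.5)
Your proposal is correct and follows the paper's proof: both use homogeneity of the $X_{ij}$ to get $\mathcal X_{\mathbf n}=\sum_{i,j}X_{ij}\mathcal A_{\mathbf n-a_{ij}\mathbf e_i}$, and then observe that $(\mathcal A/\mathcal X)_{\mathbf n}=0$ is exactly the defining identity of Definition~\ref{def:strong}. Your justification of the componentwise description of $\mathcal X$, and your remark that taking $\mathbf n$ large enough to ensure $n_i\geq a_{ij}$ avoids negative indices, are correct details that the paper leaves implicit.
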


\begin{proof}
Since the $X_{ij}$ are homogeneous, $\mathcal X$ is homogeneous and $\mathcal X_{\mathbf n}
=
\sum_{i,j}X_{ij}\mathcal A_{\mathbf n-a_{ij}\mathbf e_i}.$
Hence the defining equality in Definition~\ref{def:strong} is equivalent
to $\mathcal X_{\mathbf n}=\mathcal A_{\mathbf n}$ for every
$\mathbf n\gg\mathbf0$, which in turn is equivalent to the stated
vanishing of the homogeneous components of the quotient.
\end{proof}

\subsection*{Admissible collections}

We first collect several facts about admissible collections that will be
used for the rest of this section.

\begin{lem}
\label{lem:admissible-invariance}
Assume that $\boldsymbol{\mathcal I}$ is multigraded admissible with
respect to $\mathbf J=(J_1,\ldots,J_s)$. Then, for every
$d_1+\cdots+d_s=d$,
\[
e\!\left(\mathcal I^{(1)[d_1]},\ldots,
\mathcal I^{(s)[d_s]}\right)
=
e(J_1^{[d_1]},\ldots,J_s^{[d_s]}).
\]
\end{lem}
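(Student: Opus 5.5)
The plan is to compare the two multigraded Hilbert functions by a squeeze, and then pass to Cutkosky's polynomial $P_{\boldsymbol{\mathcal I}}$.

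For the lower inclusion, condition (1) of Definition~\ref{def:admissible} gives $\mathbf J^{\mathbf n}\subseteq\mathcal F(\mathbf n)$ for every $\mathbf n\in\N^s$.

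For the upper inclusion, condition (5) says that $\mathcal R'(\mathcal F)$ is a finite $\mathcal R'(\mathbf J)$-module. I will choose finitely many homogeneous generators, of degrees $\mathbf c_1,\ldots,\mathbf c_r\in\Z^s$, and fix $\mathbf k\in\N^s$ with $\mathbf k\geq\mathbf c_\ell$ for all $\ell$. A homogeneous element of $\mathcal R'(\mathbf J)$ of degree $\mathbf n-\mathbf c_\ell$ has coefficient in $\mathbf J^{(\mathbf n-\mathbf c_\ell)^+}$, since the $t_i^{-1}$ only lower degrees. Here I use the convention $J^m=R$ for $m\leq0$, which I need to check against Definition~\ref{def:admissible}. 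It follows that $\mathcal F(\mathbf n)\subseteq\mathbf J^{(\mathbf n-\mathbf k)^+}$, and in particular $\mathcal F(\mathbf n)\subseteq\mathbf J^{\mathbf n-\mathbf k}$ for all $\mathbf n\geq\mathbf k$. This step is the main technical point. If the bookkeeping with the $t^{-1}$-parts becomes awkward, I will instead use the finiteness of $\mathcal R(\boldsymbol{\mathcal I})$ over $\mathcal R(\mathbf J)$ quoted from \cite{MasutiSarkarVerma}. Since that finiteness is over the non-extended Rees algebra, it gives the same containment directly.

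Next I pass to multiplicities. Fix $\mathbf n\in\N^s$ with all $n_i>0$, and consider the single graded family $\{\mathcal F(m\mathbf n)\}_{m\geq0}$. For $m$ large, the two inclusions give
\[
\mathbf J^{m\mathbf n}\subseteq\mathcal F(m\mathbf n)\subseteq\mathbf J^{m\mathbf n-\mathbf k}.
\]
Multiplicity of $\mathfrak m$-primary ideals is monotone: $I\subseteq I'$ implies $e(I')\leq e(I)$. Hence
\[
e(\mathbf J^{m\mathbf n-\mathbf k})\leq e(\mathcal F(m\mathbf n))\leq e(\mathbf J^{m\mathbf n}).
\]
By the classical theory (\cite[Theorem~17.4.9]{SwansonHuneke} together with the mixed multiplicity expansion of $e(J_1^{u_1}\cdots J_s^{u_s})$), we have
\[
e(J_1^{u_1}\cdots J_s^{u_s})=\sum_{|\mathbf d|=d}\frac{d!}{d_1!\cdots d_s!}e(J_1^{[d_1]},\ldots,J_s^{[d_s]})u_1^{d_1}\cdots u_s^{d_s}.
\]
This is a homogeneous polynomial of degree $d$ in $\mathbf u$. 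Evaluating it at $\mathbf u=m\mathbf n$ and at $\mathbf u=m\mathbf n-\mathbf k$, dividing by $m^d$, and letting $m\to\infty$, both bounds converge to the same value. Therefore
\[
P_{\boldsymbol{\mathcal I}}(\mathbf n)=\sum_{|\mathbf d|=d}\frac{d!}{d_1!\cdots d_s!}e(J_1^{[d_1]},\ldots,J_s^{[d_s]})\,n_1^{d_1}\cdots n_s^{d_s}
\]
for all $\mathbf n$ with positive entries.

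Finally, both sides are polynomials that agree on the Zariski-dense set of positive integer points, so they are equal as polynomials. Comparing the coefficients of $n_1^{d_1}\cdots n_s^{d_s}$ with those in Definition~\ref{def:mixed-family} gives the claimed equality.
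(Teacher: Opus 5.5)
Your proposal is correct and follows essentially the paper's argument: the sandwich $\mathbf J^{m\mathbf q}\subseteq\mathcal F(m\mathbf q)\subseteq\mathbf J^{m\mathbf q-\mathbf c}$, then monotonicity of multiplicity, then a squeeze after dividing by $m^d$, and finally comparison of coefficients of two homogeneous polynomials that agree on positive integer points. The differences are minor. Your derivation of the upper inclusion directly from condition (5) via the extended Rees algebra works (its degree-$\mathbf u$ component has coefficient ideal $\mathbf J^{\mathbf u^+}$) and avoids the paper's appeal to \cite[Proposition~2.5]{MasutiSarkarVerma}; and the homogeneity of $e(J_1^{u_1}\cdots J_s^{u_s})$ comes from \cite[Theorem~17.4.2 and Definition~17.4.3]{SwansonHuneke}, not from Rees's Theorem~17.4.9 (the paper instead applies \cite[Theorem~1.6]{Cutkosky2026} to the adic families).
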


\begin{proof}
Write $\mathcal A=\mathcal R(\mathcal F)$ and
$\mathcal B=\mathcal R(\mathbf J)$. By
\cite[Proposition~2.5]{MasutiSarkarVerma}, $\mathcal A$ is a finite
graded $\mathcal B$-module. Choose homogeneous $\mathcal B$-module
generators
$z_1,\ldots,z_v$ of $\mathcal A$, say
$z_\ell=f_\ell\mathbf t^{\mathbf b_\ell}$ with
$f_\ell\in\mathcal F(\mathbf b_\ell)$. Choose
$\mathbf c\in\N^s$ with $\mathbf b_\ell\leq\mathbf c$ componentwise
for every $\ell$. For $\mathbf n\gg\mathbf0$, taking the homogeneous component of degree
$\mathbf n$ in the equality
$\mathcal A=\sum_\ell\mathcal Bz_\ell$ gives $\mathcal F(\mathbf n)
=
\sum_{\ell=1}^{v}
\mathbf J^{\mathbf n-\mathbf b_\ell}f_\ell.$
Since each $f_\ell\in R$, every summand is contained in
$\mathbf J^{\mathbf n-\mathbf b_\ell}$. Moreover,
$\mathbf b_\ell\leq\mathbf c$ implies
$\mathbf n-\mathbf b_\ell\geq\mathbf n-\mathbf c$, hence
$\mathbf J^{\mathbf n-\mathbf b_\ell}
\subseteq\mathbf J^{\mathbf n-\mathbf c}$. Therefore $\mathcal F(\mathbf n)\subseteq\mathbf J^{\mathbf n-\mathbf c}$
 for $\mathbf n\gg\mathbf0).$
On the other hand, because $\mathcal F$ is a $\mathbf J$-filtration,
Definition~\ref{def:admissible} gives
$\mathbf J^{\mathbf n}\subseteq\mathcal F(\mathbf n)$. Consequently,
\begin{equation}
\label{eq:admissible-sandwich}
\mathbf J^{\mathbf n}
\subseteq\mathcal F(\mathbf n)
\subseteq\mathbf J^{\mathbf n-\mathbf c}
\qquad(\mathbf n\gg\mathbf0).
\end{equation}

Fix $\mathbf q=(q_1,\ldots,q_s)\in\N_{>0}^s$. For $m\gg0$,
\eqref{eq:admissible-sandwich} gives $\mathbf J^{m\mathbf q}
\subseteq\mathcal F(m\mathbf q)
\subseteq\mathbf J^{m\mathbf q-\mathbf c}.$
Since the Hilbert--Samuel multiplicity is decreasing under inclusion of
$\mathfrak m$-primary ideals, we derive
\[
e(\mathbf J^{m\mathbf q})
\geq e(\mathcal F(m\mathbf q))
\geq e(\mathbf J^{m\mathbf q-\mathbf c}).
\]
The first outer term satisfies
$e(\mathbf J^{m\mathbf q})=m^d e(J_1^{q_1}\cdots J_s^{q_s})$.
For the second outer term, apply
\cite[Theorem~1.6]{Cutkosky2026} to the adic families
$\{J_i^n\}_{n\geq0}$. The function
$e(J_1^{u_1}\cdots J_s^{u_s})$ is thereby a homogeneous polynomial of
degree $d$ in $u_1,\ldots,u_s$, and hence
\[
\lim_{m\to\infty}
\frac{e(\mathbf J^{m\mathbf q-\mathbf c})}{m^d}
=
e(J_1^{q_1}\cdots J_s^{q_s}).
\]
Dividing the preceding inequalities by $m^d$ and applying the squeeze
theorem provides that $P_{\boldsymbol{\mathcal I}}(\mathbf q)
=
e(J_1^{q_1}\cdots J_s^{q_s}).$
Since both sides are homogeneous polynomials of degree $d$ and agree for
all $\mathbf q\in\mathbb N_{>0}^s$, they are identical. The result follows comparing the
coefficients of $q_1^{d_1}\cdots q_s^{d_s}$.
\end{proof}

\begin{lem}
\label{lem:change-base}
Let $\mathcal F$ be $\mathbf J$-admissible and set
$K_i=\mathcal F(\mathbf e_i)$. Then $\mathcal F$ is also
$\mathbf K=(K_1,\ldots,K_s)$-admissible.
\end{lem}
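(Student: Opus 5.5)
We check the five conditions of Definition~\ref{def:admissible} with $\mathbf J$ replaced by $\mathbf K=(K_1,\ldots,K_s)$, where $K_i=\mathcal F(\mathbf e_i)=I_1^{(i)}$. Conditions (2), (3) and (4) concern only $\mathcal F$ and not the base, so they carry over from the $\mathbf J$-admissibility unchanged. Each $K_i$ is $\mathfrak m$-primary, since $J_i=\mathbf J^{\mathbf e_i}\subseteq K_i$ by (1). For condition (1), take $\mathbf n\in\Z^s$ and set $\mathbf n^+=(n_1^+,\ldots,n_s^+)$. Since $\mathbf K^{\mathbf n}$ is interpreted as $\mathbf K^{\mathbf n^+}$, as in the $\mathbf J$-filtration convention, we get
\[
\mathbf K^{\mathbf n}=K_1^{n_1^+}\cdots K_s^{n_s^+}=\prod_i\mathcal F(\mathbf e_i)^{n_i^+}\subseteq\mathcal F(\mathbf n^+)=\mathcal F(\mathbf n),
\]
using multiplicativity (2) repeatedly and then (4).

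The substantive point is finiteness, condition (5). Since $J_i\subseteq K_i$, we have $\mathcal R'(\mathbf J)\subseteq\mathcal R'(\mathbf K)\subseteq\mathcal R'(\mathcal F)$. The last inclusion holds because $K_it_i\subseteq\mathcal F(\mathbf e_i)\mathbf t^{\mathbf e_i}$, because $t_i^{-1}\in\mathcal F(-\mathbf e_i)\mathbf t^{-\mathbf e_i}=R\,t_i^{-1}$ by (4), and because $\mathcal R'(\mathcal F)$ is a ring by (2). By hypothesis, $\mathcal R'(\mathcal F)$ is a finite $\mathcal R'(\mathbf J)$-module. Any finite generating set over the smaller ring $\mathcal R'(\mathbf J)$ also generates over the larger ring $\mathcal R'(\mathbf K)$, so $\mathcal R'(\mathcal F)$ is a finite $\mathcal R'(\mathbf K)$-module. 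This step needs only the inclusion of algebras. In particular, we do not need Noetherianity of $\mathcal R'(\mathbf K)$ or any use of \cite[Proposition~2.5]{MasutiSarkarVerma}.

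The only place requiring care is the convention on negative exponents in condition (1): we need $\mathbf K^{\mathbf n}$ for $\mathbf n\in\Z^s$ to mean $\mathbf K^{\mathbf n^+}$, consistently with the $\mathbf J$ case. Once that convention is in place, every step is a direct check, and no step presents a real obstacle.
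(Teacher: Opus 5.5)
Your proposal is correct and follows the paper's proof essentially step for step. Conditions (1)--(4) come from multiplicativity and are otherwise unchanged, and finiteness follows from the chain $\mathcal R'(\mathbf J)\subseteq\mathcal R'(\mathbf K)\subseteq\mathcal R'(\mathcal F)$ with the same generators. Your extra checks, namely the $\mathbf n^+$ convention for negative exponents and $t_i^{-1}\in\mathcal R'(\mathcal F)$ because $\mathcal F(-\mathbf e_i)=R$, make explicit what the paper leaves implicit.
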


\begin{proof}
Since $\mathcal F$ is a $\mathbf J$-filtration,
$J_i\subseteq K_i$. By multiplicativity,
$K_1^{n_1}\cdots K_s^{n_s}\subseteq\mathcal F(\mathbf n)$ for every
$\mathbf n\in\N^s$, while the monotonicity and the equality
$\mathcal F(\mathbf n)=\mathcal F(\mathbf n^+)$ are unchanged. Moreover, $\mathcal R'(\mathbf J)
\subseteq\mathcal R'(\mathbf K)
\subseteq\mathcal R'(\mathcal F).$
If $u_1,\ldots,u_v$ generate $\mathcal R'(\mathcal F)$ over
$\mathcal R'(\mathbf J)$, then the same elements generate it over the
larger intermediate algebra $\mathcal R'(\mathbf K)$. Therefore
$\mathcal F$ is $\mathbf K$-admissible.
\end{proof}

\begin{lem}
\label{lem:veronese-admissible}
Assume that $\mathcal F$ is $\mathbf J$-admissible and let
$\mathbf A=(A_1,\ldots,A_s)\in\N_{>0}^s$. Define $\mathcal F^{(\mathbf A)}(\mathbf n)
=
\mathcal F(A_1n_1,\ldots,A_sn_s).$
Then $\mathcal F^{(\mathbf A)}$ is
$\mathbf J^{(\mathbf A)}=(J_1^{A_1},\ldots,J_s^{A_s})$-admissible.
\end{lem}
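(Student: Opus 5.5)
We check the five conditions of Definition~\ref{def:admissible} for $\mathcal F^{(\mathbf A)}$, extended to all of $\Z^s$ by the same formula $\mathcal F^{(\mathbf A)}(\mathbf n)=\mathcal F(A_1n_1,\ldots,A_sn_s)$. Write $\mathbf A\mathbf n=(A_1n_1,\ldots,A_sn_s)$.

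Conditions (1)--(4) are formal consequences of the same conditions for $\mathcal F$.

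\begin{enumerate}
\item $(\mathbf J^{(\mathbf A)})^{\mathbf n}=\mathbf J^{\mathbf A\mathbf n}\subseteq\mathcal F(\mathbf A\mathbf n)$.
\item Multiplicativity follows from $\mathbf A(\mathbf n+\mathbf m)=\mathbf A\mathbf n+\mathbf A\mathbf m$.
\item Monotonicity holds because $\mathbf m\geq\mathbf n$ implies $\mathbf A\mathbf m\geq\mathbf A\mathbf n$, since every $A_i>0$.
\item $(\mathbf A\mathbf n)^+=\mathbf A(\mathbf n^+)$, again because $A_i>0$.
\end{enumerate}

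The substance is condition (5): finiteness of $\mathcal R'(\mathcal F^{(\mathbf A)})$ over $\mathcal R'(\mathbf J^{(\mathbf A)})$. Identify $\mathcal R'(\mathcal F^{(\mathbf A)})$ with the Veronese subring
\[
\mathcal R'(\mathcal F)^{(\mathbf A)}=\bigoplus_{\mathbf n\in\A\Z^s}\mathcal F(\mathbf n)\mathbf t^{\mathbf n},
\]
where $\A\Z^s=A_1\Z\times\cdots\times A_s\Z$, via $t_i^{A_i}\mapsto t_i$. Under this identification, $\mathcal R'(\mathbf J^{(\mathbf A)})$ becomes $\mathcal R'(\mathbf J)^{(\mathbf A)}$, which is generated by the elements $J_i^{A_i}t_i^{A_i}$ and $t_i^{-A_i}$.

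The plan has two steps.

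\textbf{Step 1.} Show that $\mathcal R'(\mathbf J)$ is finite over $C=\mathcal R'(\mathbf J)^{(\mathbf A)}$. It is generated over $C$ by the monomials of multidegree $\mathbf b$ with $0\leq b_i<A_i$, that is, by the finitely generated $R$-modules $\mathbf J^{\mathbf b}\mathbf t^{\mathbf b}$. To see this, take any homogeneous element of $\mathcal R'(\mathbf J)$ of degree $\mathbf n$. Write $n_i=A_iq_i+b_i$ with $0\leq b_i<A_i$. If $n_i\geq0$, then $J_i^{n_i}=J_i^{A_iq_i}J_i^{b_i}$. If $n_i<0$, the $i$th factor is a unit power of $t_i^{-1}$, and we absorb $t_i^{-A_iq_i}\in C$ using $t_i^{b_i}$, whose coefficient is $J_i^{b_i}\supseteq$ what is needed. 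More simply, in that case multiply by $t_i^{-A_i\lceil\cdot\rceil}$. This is routine bookkeeping.

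\textbf{Step 2.} By hypothesis, $\mathcal R'(\mathcal F)$ is finite over $\mathcal R'(\mathbf J)$, so by Step 1 it is finite over $C$. The ring $C$ is Noetherian, being a finitely generated $R$-algebra. Hence $\mathcal R'(\mathcal F)$ is a Noetherian $C$-module. Its $C$-submodule $\mathcal R'(\mathcal F)^{(\mathbf A)}$ is the direct summand consisting of the components with degrees in $\A\Z^s$, and it is therefore finitely generated over $C$. This is exactly condition (5).

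The main obstacle is making Step 1 airtight in the presence of negative degrees. If the bookkeeping there becomes awkward, an alternative is to note directly that $\mathcal R'(\mathbf J)$ is integral over $C$: each generator $x t_i$ with $x\in J_i$ satisfies $(xt_i)^{A_i}\in C$, and each generator $t_i^{-1}$ satisfies $(t_i^{-1})^{A_i}\in C$. An algebra finitely generated by integral elements is finite, so $\mathcal R'(\mathbf J)$ is finite over $C$, and the Noetherian argument of Step 2 applies unchanged.
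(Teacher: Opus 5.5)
Your proposal is correct and follows essentially the paper's route. The paper's step of ``splitting the generators and coefficients by residue classes in $\prod_i\mathbb Z/A_i\mathbb Z$'' tacitly relies on your Step~1 (finiteness of $\mathcal R'(\mathbf J)$ over its $\mathbf A$-Veronese) and then takes the Veronese summand of the finite module $\mathcal R'(\mathcal F)$, as in your Step~2. One correction: your first version of Step~1 fails in negative degrees, since when $A_i\geq2$ the modules $\mathbf J^{\mathbf b}\mathbf t^{\mathbf b}$ with $0\leq b_i<A_i$ give only $J_i^{A_i-1}t_i^{-1}$ and not $t_i^{-1}$ (as $J_i\neq R$); either add the generators $t_i^{-b}$ for $0<b<A_i$, or use your integrality argument, which is complete as stated.
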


\begin{proof}
The filtration, multiplicativity, monotonicity and positive-part
conditions follow immediately from those of $\mathcal F$. It remains to
prove module-finiteness of the extended Rees algebra. Put $B=\mathcal R'(\mathbf J)$ and
$C=\mathcal R'(\mathcal F)$. By admissibility, $C$ is finite over $B$.
Let $B^{(\mathbf A)}$ and $C^{(\mathbf A)}$ denote the coordinatewise
Veronese subrings, so that
$C^{(\mathbf A)}=\mathcal R'(\mathcal F^{(\mathbf A)})$ and
$B^{(\mathbf A)}=\mathcal R'(\mathbf J^{(\mathbf A)})$ after the
obvious regrading. A Veronese submodule of a finite multigraded module is
finite over the corresponding Veronese subring. For completeness, choose
homogeneous $B$-module generators $u_1,\ldots,u_v$ of $C$. Splitting
these generators and the coefficients according to the finitely many
residue classes in
$\prod_i\mathbb Z/A_i\mathbb Z$ shows that each Veronese component of
$C$ is generated over $B^{(\mathbf A)}$ by finitely many homogeneous
pieces. Hence $C^{(\mathbf A)}$ is finite over $B^{(\mathbf A)}$.
\end{proof}

In the admissible setting, strong homogeneous joint reductions exist
already in degree one. This will also
give existence in arbitrary prescribed positive degrees.

\begin{thm}
\label{thm:strong-existence}
Assume that $R/\mathfrak m$ is infinite and that
$\boldsymbol{\mathcal I}$ is multigraded admissible with respect to
$\mathbf J=(J_1,\ldots,J_s)$. If
$\mathbf r=(r_1,\ldots,r_s)\in\N^s$ satisfies $|\mathbf r|=d$, then
there exist $x_{ij}\in J_i\subseteq I_1^{(i)}$ such that $\mathcal F(\mathbf n)
=
\sum_{i=1}^{s}\sum_{j=1}^{r_i}
 x_{ij}\mathcal F(\mathbf n-\mathbf e_i)$ for  $\mathbf n\gg\mathbf0$. In particular, $\{x_{ij}\}$ is a degree-one strong homogeneous joint
reduction of type $\mathbf r$.
\end{thm}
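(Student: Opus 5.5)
Our plan is to reduce the statement to the classical existence of joint reductions for the ideals $J_i$, and then to transfer the resulting reduction from $\mathbf J^{\mathbf n}$ to $\mathcal F(\mathbf n)$ using module-finiteness of $\mathcal A=\mathcal R(\mathcal F)$ over $\mathcal B=\mathcal R(\mathbf J)$.

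\emph{Step 1 (classical joint reduction of the $J_i$).} List $J_i$ exactly $r_i$ times, obtaining $d$ $\mathfrak m$-primary ideals. As in the proof of Theorem~\ref{thm:weak-existence}, \cite[Theorem~17.3.1]{SwansonHuneke} gives elements $x_{ij}\in J_i$ forming a joint reduction of this list; here we use that $R/\mathfrak m$ is infinite. By $J_i\subseteq\mathcal F(\mathbf e_i)=I_1^{(i)}$, the $x_{ij}$ lie in $I_1^{(i)}$.

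\emph{Step 2 (Rees-algebra formulation).} The classical theory (\cite[Chapter~17]{SwansonHuneke}, via the multigraded fiber cone and the proof of Theorem~17.3.1) yields a stronger property than the defining reduction condition. Namely, the ideal $\mathcal X_{\mathbf J}=(x_{ij}t_i)\mathcal B$ satisfies $(\mathcal B/\mathcal X_{\mathbf J})_{\mathbf n}=0$ for all $\mathbf n\gg\mathbf0$. Equivalently,
\[
\mathbf J^{\mathbf n}=\sum_{i,j}x_{ij}\mathbf J^{\mathbf n-\mathbf e_i}\qquad(\mathbf n\gg\mathbf0).
\]
This identity is the main obstacle. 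The joint reduction condition only says that $\sum_\ell y_\ell\prod_{k\ne\ell}J_k$ is a reduction of $\prod_k J_k$. To pass to all large multidegrees, we use the standard equivalence that the $y_\ell$ are "superficial-type" elements whose initial forms in the fiber ring $\bigoplus\mathbf J^{\mathbf n}/\mathfrak m\mathbf J^{\mathbf n}$ generate an ideal of finite colength in sufficiently large degrees. We then apply Nakayama degreewise to lift to $\mathbf J^{\mathbf n}$. If this equivalence cannot be quoted directly, we would instead choose the $x_{ij}$ generically by prime avoidance in the fiber ring. Since the fiber ring has relevant dimension $d=|\mathbf r|$, general elements of the prescribed multidegrees cut $\operatorname{Proj}$ down to the empty set, which gives the vanishing for $\mathbf n\gg\mathbf 0$.

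\emph{Step 3 (transfer to $\mathcal F$).} Write $\mathcal A=\sum_{\ell}\mathcal B z_\ell$ with $z_\ell$ homogeneous of degree $\mathbf b_\ell$, as in Lemma~\ref{lem:admissible-invariance}. Then
\[
\mathcal A/\mathcal X\mathcal A=\mathcal A\otimes_{\mathcal B}\mathcal B/\mathcal X_{\mathbf J}
\]
is generated by the images of the $z_\ell$. Hence its degree-$\mathbf n$ component is a sum of pieces $(\mathcal B/\mathcal X_{\mathbf J})_{\mathbf n-\mathbf b_\ell}\bar z_\ell$. For $\mathbf n\gg\mathbf0$ each shifted degree $\mathbf n-\mathbf b_\ell$ is also $\gg\mathbf0$, so every piece vanishes by Step 2. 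Thus $\mathcal A_{\mathbf n}=\sum_{i,j}X_{ij}\mathcal A_{\mathbf n-\mathbf e_i}$, which is the asserted identity
\[
\mathcal F(\mathbf n)=\sum_{i,j}x_{ij}\mathcal F(\mathbf n-\mathbf e_i)\qquad(\mathbf n\gg\mathbf0).
\]
By Proposition~\ref{prop:intrinsic}, $\{x_{ij}\}$ is therefore a degree-one strong homogeneous joint reduction of type $\mathbf r$.
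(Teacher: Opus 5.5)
Your proof is correct, but it takes a different route from the paper. The paper's proof is a citation: \cite[Theorem~2.4]{SarkarVerma} already gives, for a $\mathbf J$-admissible filtration with infinite residue field, joint reductions of every type $\mathbf r$ with $|\mathbf r|=d$, consisting of elements $x_{ij}\in J_i$. The paper then only notes that $J_i\subseteq\mathcal F(\mathbf e_i)=I_1^{(i)}$.

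You instead reprove that existence statement. You start from the classical theorem \cite[Theorem~17.3.1]{SwansonHuneke} for the base ideals. You then transfer the resulting identity from $\mathbf J^{\mathbf n}$ to $\mathcal F(\mathbf n)$ using module-finiteness of $\mathcal R(\mathcal F)$ over $\mathcal R(\mathbf J)$. This is the same shift-by-generator-degrees argument the paper uses in Lemma~\ref{lem:admissible-invariance} and at the end of the proof of Theorem~\ref{thm:localized-strong-converse}.

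What your route buys is a self-contained argument and a sharper observation: \emph{every} classical joint reduction of the base ideals is automatically a degree-one strong joint reduction of every $\mathbf J$-admissible collection. The paper's route is shorter but outsources all the content to Sarkar--Verma.

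One correction to your write-up: Step~2 is not an obstacle. It needs neither a fiber-ring or superficial-element equivalence nor a generic choice via prime avoidance.
\begin{itemize}
\item If $H=\sum_\ell y_\ell\prod_{k\neq\ell}K_k$ and $L=\prod_kK_k$ satisfy $HL^r=L^{r+1}$, then $L^{r+1}=\sum_\ell y_\ell K_\ell^{r}\prod_{k\neq\ell}K_k^{r+1}$. This is the identity in multidegree $(r+1,\ldots,r+1)$ of the $d$-fold grading.
\item Multiplying by further powers extends it to all larger multidegrees.
\item Grouping the $r_i$ copies of $J_i$ gives the identity whenever $n_i\geq r_i(r+1)$ for the active coordinates.
\item Multiplying by $\prod_{h:\,r_h=0}J_h^{n_h}$ handles the inactive coordinates.
\end{itemize}
So the classical defining condition is equivalent to the eventual multigraded identity, not weaker than it. This is exactly the manipulation the paper performs in Corollary~\ref{cor:strong-rees-classical} and in the proof of Theorem~\ref{thm:localized-strong-converse}.
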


\begin{proof}
By \cite[Theorem~2.4]{SarkarVerma}, an admissible filtration over a local
ring of dimension $d\geq1$ with infinite residue field admits a joint
reduction of every type $\mathbf r$ with $|\mathbf r|=d$, consisting of
elements $x_{ij}\in J_i$. By
\cite[Definition~1.6]{SarkarVerma}, this means precisely that the
displayed equality holds for every sufficiently large multidegree.
Since $J_i\subseteq\mathcal F(\mathbf e_i)=I_1^{(i)}$, the elements
$x_{ij}t_i$ belong to $\mathcal R(\boldsymbol{\mathcal I})$ and satisfy
Definition~\ref{def:strong} with all homogeneous degrees equal to one.
\end{proof}

\begin{cor}
\label{cor:weighted-strong-existence}
Assume that $R/\mathfrak m$ is infinite and that
$\boldsymbol{\mathcal I}$ is multigraded admissible with respect to
$\mathbf J=(J_1,\ldots,J_s)$. Let
$\mathbf r=(r_1,\ldots,r_s)\in\mathbb N^s$ satisfy $|\mathbf r|=d$,
and prescribe positive integers $a_{ij}$ for
$1\leq i\leq s$ and $1\leq j\leq r_i$. Then there exist elements $y_{ij}\in J_i^{a_{ij}}\subseteq I_{a_{ij}}^{(i)}$
forming a strong homogeneous joint reduction of type $\mathbf r$ with
homogeneous degrees $a_{ij}$.
\end{cor}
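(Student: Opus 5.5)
Start from the degree-one strong homogeneous joint reduction of Theorem~\ref{thm:strong-existence}: there are $x_{ij}\in J_i$ with
\[
\mathcal F(\mathbf n)=\sum_{i,j}x_{ij}\mathcal F(\mathbf n-\mathbf e_i)\qquad(\mathbf n\gg\mathbf 0).
\]
Set $y_{ij}=x_{ij}^{a_{ij}}\in J_i^{a_{ij}}\subseteq I^{(i)}_{a_{ij}}$; the last inclusion holds because $J_i\subseteq I^{(i)}_1$ and the family is multiplicative. By Proposition~\ref{prop:intrinsic} it suffices to show that $(\mathcal A/\mathcal Y)_{\mathbf n}=0$ for $\mathbf n\gg\mathbf0$. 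Here $\mathcal A=\mathcal R(\boldsymbol{\mathcal I})$, $\mathcal Y=(Y_{ij})\mathcal A$ and $Y_{ij}=y_{ij}t_i^{a_{ij}}=X_{ij}^{a_{ij}}$ with $X_{ij}=x_{ij}t_i$.

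The hypothesis says that $(\mathcal A/\mathcal X)_{\mathbf n}=0$ for $\mathbf n\gg\mathbf0$, where $\mathcal X=(X_{ij})$. The plan is a standard power-stability (pigeonhole) argument. Since $\mathcal A$ is Noetherian, being finite over $\mathcal R(\mathbf J)$, the ideal $\mathcal X$ is finitely generated. Write $D=\sum_{i,j}(a_{ij}-1)$, and suppose $\mathcal A_{\mathbf n}=\mathcal X_{\mathbf n}$ for all $\mathbf n\geq \mathbf N_0$.

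The key claim is that for $\mathbf n\geq \mathbf N_0+N\mathbf 1$ with $N$ large, iterating the reduction equation expresses $\mathcal A_{\mathbf n}$ as a sum of terms
\[
\prod_{i,j}X_{ij}^{b_{ij}}\,\mathcal A_{\mathbf n-\sum_{i,j} b_{ij}\mathbf e_i}.
\]
Here $\sum_j b_{ij}$ is the number of steps taken in direction $i$. Each single step lowers one coordinate by one, and we may continue stepping as long as the current degree stays $\geq\mathbf N_0$. So after $d'$ steps, with $d'$ chosen so that $\mathbf n-(\text{total shift})\geq \mathbf N_0$ is preserved, we get
\[
\mathcal A_{\mathbf n}=\sum_{|\mathbf b|=d'}\mathbf X^{\mathbf b}\mathcal A_{\mathbf n-\deg\mathbf b}.
\]
To keep the current degree $\geq \mathbf N_0$, take each coordinate of $\mathbf n$ at least $N_0+d'$. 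If $d'>D$, the pigeonhole principle gives some $b_{ij}\geq a_{ij}$, so each monomial lies in $Y_{ij}\mathcal A$. Hence $\mathcal A_{\mathbf n}=\mathcal Y_{\mathbf n}$ for all $\mathbf n\geq \mathbf N_0+(D+1)\mathbf 1$, which is the required vanishing.

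The main obstacle is bookkeeping the degree shifts so that the iteration never leaves the range where the reduction equality holds. Since each step lowers only one coordinate by one, the uniform margin $(D+1)\mathbf 1$ suffices. One must also check the convention: "$\mathbf n\gg\mathbf0$" must mean all coordinates are large, and this is consistent with the definition. No hypothesis on the residue field is needed beyond what Theorem~\ref{thm:strong-existence} already uses.
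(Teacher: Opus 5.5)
Your argument is correct, but it takes a different route from the paper. The paper sets $y_{ij}=x_{ij}^{a_{ij}}$ and invokes Lemma~\ref{lem:power-stability}, whose proof is ideal-theoretic and runs as follows:
\begin{enumerate}
\item By admissibility, $\mathcal A$ is finite over the standard algebra $\mathcal B=\mathcal R(\mathbf J)$.
\item The eventual vanishing of $\mathcal A/L$ then gives $\mathcal B_{++}^{N}\mathcal A\subseteq L$.
\item Since $\sqrt{L'}=\sqrt L$ and $\mathcal A$ is Noetherian, one gets $\mathcal B_{++}^{N'}\mathcal A\subseteq L'$.
\item Standardness of $\mathcal B$ turns this back into vanishing of $(\mathcal A/L')_{\mathbf n}$ for $\mathbf n\gg\mathbf 0$.
\end{enumerate}

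You instead iterate the degree-one identity $D+1$ times and use pigeonhole to force some $X_{ij}^{a_{ij}}$ to divide every resulting monomial. This is fully elementary and gives the explicit threshold $\mathbf N_0+(D+1)\mathbf 1$. It never uses admissibility, Noetherianity, or finiteness over $\mathcal R(\mathbf J)$; your remark that $\mathcal X$ is finitely generated plays no role. So your argument shows that power stability of strong homogeneous joint reductions holds for arbitrary graded families.

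The paper's lemma is stated for arbitrary initial homogeneous degrees and is reused in Theorem~\ref{thm:weighted-strong-rees}. Your argument adapts to that setting as well: take steps of size $a_{ij}$ in direction $i$, apply pigeonhole on the exponents $c_{ij}$, and use a margin of $(\max a_{ij})$ times the number of steps.
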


\begin{proof}
By Theorem~\ref{thm:strong-existence}, there exist
$x_{ij}\in J_i\subseteq I_1^{(i)}$ forming a degree-one strong
homogeneous joint reduction of type $\mathbf r$. Set
$y_{ij}=x_{ij}^{a_{ij}}$. Then
$y_{ij}\in J_i^{a_{ij}}\subseteq I_{a_{ij}}^{(i)}$, and
Lemma~\ref{lem:power-stability} shows that $\{y_{ij}\}$ remains a strong
homogeneous joint reduction, now with homogeneous degrees $a_{ij}$.
\end{proof}

The degree-one case provides the basic exact Rees formula in the strong
setting and will serve as the starting point for the weighted version. 
\begin{prop}
\label{prop:degree-one-rees}
Assume that $\boldsymbol{\mathcal I}$ is multigraded admissible. Let
$\mathbf d=(d_1,\ldots,d_s)\in\N^s$ satisfy $|\mathbf d|=d$, and suppose
that $x_{ij}\in I_1^{(i)}$ form a degree-one strong homogeneous joint
reduction of type $\mathbf d$. Set $Q=(x_{ij})$. Then $Q$ is
$\mathfrak m$-primary and
\[
e(Q;R)
=
e\!\left(\mathcal I^{(1)[d_1]},\ldots,
\mathcal I^{(s)[d_s]}\right).
\]
\end{prop}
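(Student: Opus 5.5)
The plan is to reduce to the classical Rees theorem \eqref{eq:classical-rees} applied to the base ideals $K_i=\mathcal F(\mathbf e_i)=I_1^{(i)}$, and then to transport the result to the graded families via Lemma~\ref{lem:admissible-invariance}. By Lemma~\ref{lem:change-base}, $\boldsymbol{\mathcal I}$ is admissible with respect to $\mathbf K=(K_1,\ldots,K_s)$. Lemma~\ref{lem:admissible-invariance} therefore gives
\[
e\!\left(\mathcal I^{(1)[d_1]},\ldots,\mathcal I^{(s)[d_s]}\right)=e(K_1^{[d_1]},\ldots,K_s^{[d_s]}).
\]
It then suffices to show that $\{x_{ij}\}$, with $x_{ij}\in K_i$, is a classical joint reduction of the list in which $K_i$ is repeated $d_i$ times. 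Granting this, \cite[Theorem~17.4.9]{SwansonHuneke} gives $e(Q;R)=e(K_1^{[d_1]},\ldots,K_s^{[d_s]})$, and $\mathfrak m$-primariness of $Q$ also follows from the joint reduction property; alternatively it follows directly, as described next.

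For the joint reduction claim, I would first pass from $\mathcal F$ to $\mathbf K$-powers. By the sandwich \eqref{eq:admissible-sandwich} for the base $\mathbf K$, we have $\mathbf K^{\mathbf n}\subseteq\mathcal F(\mathbf n)\subseteq\mathbf K^{\mathbf n-\mathbf c}$ for $\mathbf n\gg\mathbf 0$. Iterating the strong reduction identity $\mathcal F(\mathbf n)=\sum x_{ij}\mathcal F(\mathbf n-\mathbf e_i)$ then gives
\[
\mathcal F(\mathbf n)\subseteq Q^{N}\,\mathcal F(\mathbf n - \text{(something of size } N))
\]
for large $N$. This already shows $\mathbf K^{\mathbf n}\subseteq Q^N$ for every $N$, hence $\sqrt Q=\mathfrak m$.

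For the reduction statement, I would work with the graded module $\mathcal A=\mathcal R(\mathcal F)$, which is finite over $\mathcal B=\mathcal R(\mathbf K)$. Proposition~\ref{prop:intrinsic} says $\mathcal A/\mathcal X\mathcal A$ vanishes in large multidegrees, where $\mathcal X=(x_{ij}t_i)$. Set $\mathcal X'=(x_{ij}t_i)\mathcal B$. Then $\mathcal A/\mathcal X'\mathcal A$ is a finite $\mathcal B$-module killed in large degrees. Hence a power of the product of the irrelevant ideals $(K_it_i)$ maps $\mathcal A$ into $\mathcal X'\mathcal A$, which means $(K_1t_1\cdots K_st_s)$ is integral over $\mathcal X'$ on the faithful module $\mathcal A$ by the determinantal trick. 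Since $\mathcal B\subseteq\mathcal A$ and $\mathcal A$ is finite over $\mathcal B$, the determinantal trick converts this into integrality of $\mathcal B_{\mathbf n}$ over $\mathcal X'_{\mathbf n}$ in large degrees. Reading off the multidegree $(1,\ldots,1)$ component, after multiplying by a high power of the product to stay in the stable range, yields the joint reduction condition
\[
\sum x_{ij}K_1\cdots\widehat{K_i}\cdots K_s \ \text{reduces}\ K_1\cdots K_s,
\]
with repeated ideals handled by listing $K_i$ $d_i$ times. This matches \cite[Theorem~17.1.?]{SwansonHuneke}, the characterization of joint reductions via multigraded Rees algebras: the product $\prod K_it_i$ lies in the radical (integral closure) of $\mathcal X'$.

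The main obstacle is this last translation: deducing the classical joint reduction property for $\mathbf K$ from the eventual equality $\mathcal A_{\mathbf n}=\mathcal X\mathcal A_{\mathbf n}$ in the larger algebra. The difficulty is that the identity involves $\mathcal F$ rather than powers of $K_i$.

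If the integrality argument proves awkward, I would instead bypass joint reductions and compute multiplicities directly. The filtration $\mathcal F(m(1,\ldots,1))$ gives lengths $\ell(R/\mathcal F(\mathbf n))$ governed by the polynomial $P_{\boldsymbol{\mathcal I}}$. The identity $\mathcal F(\mathbf n)=\sum x_{ij}\mathcal F(\mathbf n-\mathbf e_i)$ lets one run the standard superficial-sequence induction of \cite[Section~17.4]{SwansonHuneke} with $\mathcal F$ in place of $\mathbf J^{\mathbf n}$. One reduces dimension by modding out $x_{11}$, using that the induced filtration on $R/(x_{11})$ is still admissible, and checks that the mixed multiplicity drops in type exactly as in the classical proof. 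Either route ends with $e(Q;R)=e(K_1^{[d_1]},\ldots,K_s^{[d_s]})$, which combined with the first paragraph finishes the proof.
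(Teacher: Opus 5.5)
Your architecture is the paper's. You change base to $\mathbf K$ with $K_i=I_1^{(i)}$ (Lemma~\ref{lem:change-base}) and identify the graded mixed multiplicity with $e(K_1^{[d_1]},\ldots,K_s^{[d_s]})$ (Lemma~\ref{lem:admissible-invariance}). You then push the eventual vanishing of $\mathcal A/\mathcal X'\mathcal A$ down from $\mathcal A=\mathcal R(\mathcal F)$ to $\mathcal B=\mathcal R(\mathbf K)$ and finish with the classical Rees theorem. Your direct argument that $Q$ is $\mathfrak m$-primary is fine: already $\mathcal F(\mathbf n)\subseteq Q$ for $\mathbf n\gg\mathbf 0$. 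The determinantal trick on the faithful finite $\mathcal B$-module $\mathcal A$ is a legitimate substitute for the paper's contraction $\sqrt{L\mathcal A\cap\mathcal B}=\sqrt L$. Strictly, it gives $\mathcal B_{++}^N\subseteq\overline{\mathcal X'}$, hence $\mathcal B_{++}\subseteq\sqrt{\mathcal X'}$, not integrality of $\mathcal B_{++}$ itself. The paper then uses Noetherianity and standardness of $\mathcal B$ to get the honest identity $K_1^{n_1}\cdots K_s^{n_s}=\sum_{i,j}x_{ij}K_1^{n_1}\cdots K_i^{n_i-1}\cdots K_s^{n_s}$ for $\mathbf n\gg\mathbf 0$.

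The gap is in the last step, the one you yourself flag as the main obstacle. First, the component to read off is not $(1,\ldots,1)\in\N^s$. That component gives a statement about $K_1\cdots K_s$. Rees's theorem for the list with $K_i$ repeated $d_i$ times instead needs $H=\sum_{i,j}x_{ij}K_i^{d_i-1}\prod_{h\neq i}K_h^{d_h}$ to be a reduction of $P=\prod_iK_i^{d_i}$, which you get at a multidegree like $(r+1)\mathbf d$.

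Second, and more seriously, $\mathbf d\in\N^s$ may have zero entries. For $h$ with $d_h=0$, the eventual identity can only be evaluated at $n_h\gg0$, never at $n_h=0$. What you actually obtain is therefore $P^{r+1}M_0=HP^rM_0$ with $M_0=\prod_{d_h=0}K_h^{b_h}$. This is a joint reduction with respect to the module $M_0$, not $R$. ``Multiplying by a high power of the product to stay in the stable range'' introduces exactly these inactive factors, and your proposal never removes them.

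The paper closes this by applying Rees's theorem with coefficients in $M_0$. Since $\dim R/M_0=0<d$, additivity gives $e(Q;M_0)=e(Q;R)$ and $e(K_1^{[d_1]},\ldots,K_s^{[d_s]};M_0)=e(K_1^{[d_1]},\ldots,K_s^{[d_s]};R)$. Alternatively, you can cancel directly. $P^rM_0$ is $\mathfrak m$-primary and, because $d\geq1$, not contained in any minimal prime. The determinantal trick modulo each minimal prime then gives $P\subseteq\overline H$, so $H$ reduces $P$.

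Either way, this step must be supplied. Your fallback superficial-sequence route is too schematic to fill it: admissibility of the induced filtration on $R/(x_{11})$, and the behaviour of the graded-family mixed multiplicities under that quotient, are exactly what would need proof.
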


\begin{proof}
Let $\mathcal F$ be the associated multigraded admissible filtration and
put $K_i=\mathcal F(\mathbf e_i)=I_1^{(i)}$. By
Lemma~\ref{lem:change-base}, $\mathcal F$ is $\mathbf K$-admissible.
Set
$\mathcal A=\mathcal R(\mathcal F)$ and
$\mathcal B=\mathcal R(\mathbf K)=R[K_1t_1,\ldots,K_st_s]$.
Then $\mathcal A$ is a finite graded $\mathcal B$-module, hence
$\mathcal B\subseteq\mathcal A$ is integral.

Put $X_{ij}=x_{ij}t_i\in\mathcal B$,
$L=(X_{ij})\mathcal B$, and $L\mathcal A=(X_{ij})\mathcal A$.
Since the reduction is strong,
Proposition~\ref{prop:intrinsic} gives
$(\mathcal A/L\mathcal A)_{\mathbf n}=0$ for all
$\mathbf n\gg\mathbf0$. Let $\mathcal B_{++}
=
\bigoplus_{\mathbf n\geq\mathbf e}\mathcal B_{\mathbf n},$ where $\mathbf e=(1,\ldots,1).$
The quotient $C=\mathcal A/L\mathcal A$ is a finite graded
$\mathcal B$-module. Now, choose homogeneous generators
$\overline u_1,\ldots,\overline u_v$ of degrees
$\mathbf b_1,\ldots,\mathbf b_v$. Choose
$\mathbf m$ such that $C_{\mathbf n}=0$ for all
$\mathbf n\geq\mathbf m$, and then choose $N$ so large that
$\mathbf b_\ell+N\mathbf e\geq\mathbf m$ for every $\ell$. Every
homogeneous element of $\mathcal B_{++}^N\overline u_\ell$ has degree
at least $\mathbf b_\ell+N\mathbf e$, hence is zero in $C$. Therefore $\mathcal B_{++}^{N}C=0$, or equivalently,
$\mathcal B_{++}^{N}\mathcal A\subseteq L\mathcal A.$
Applying this inclusion to $1\in\mathcal A$ gives
$\mathcal B_{++}^{N}\subseteq L\mathcal A\cap\mathcal B$. Since
$\mathcal A$ is integral over $\mathcal B$, extension and contraction of
ideals give $\sqrt{L\mathcal A\cap\mathcal B}=\sqrt L.$
Consequently $\mathcal B_{++}\subseteq\sqrt L$. As $\mathcal B$ is
Noetherian, there exists $N'>0$ with
$\mathcal B_{++}^{N'}\subseteq L$.

If $\mathbf n\geq N'\mathbf e$, standardness of the multi-Rees algebra
implies
$\mathcal B_{\mathbf n}
=\mathcal B_{N'\mathbf e}
 \mathcal B_{\mathbf n-N'\mathbf e}$, while
$\mathcal B_{N'\mathbf e}\subseteq\mathcal B_{++}^{N'}$. Hence
$\mathcal B_{\mathbf n}\subseteq L$, and therefore $\mathcal B_{\mathbf n}=L_{\mathbf n}
=
\sum_{i=1}^{s}\sum_{j=1}^{d_i}
 (x_{ij}t_i)\mathcal B_{\mathbf n-\mathbf e_i}$ for  $\mathbf n\gg\mathbf0.$
Cancelling $\mathbf t^{\mathbf n}$ gives
\begin{equation}
\label{eq:K-tail}
K_1^{n_1}\cdots K_s^{n_s}
=
\sum_{i=1}^{s}\sum_{j=1}^{d_i}
 x_{ij}K_1^{n_1}\cdots K_i^{n_i-1}\cdots K_s^{n_s}
\end{equation}
for every sufficiently large $\mathbf n$.
We now extract a classical joint reduction. Let
$S=\{i\mid d_i>0\}$ and $T=\{i\mid d_i=0\}$. Choose integers
$b_h\gg0$ for $h\in T$ so that
\eqref{eq:K-tail} applies whenever the inactive coordinates equal
$b_h$, and set
$M_0=\prod_{h\in T}K_h^{b_h}$, with $M_0=R$ if $T=\varnothing$.
Put $P=\prod_{i\in S}K_i^{d_i}$, and $H=
\sum_{i\in S}\sum_{j=1}^{d_i}
 x_{ij}K_i^{d_i-1}
 \prod_{\substack{h\in S\\h\neq i}}K_h^{d_h}.$
Choosing $r\gg0$ and substituting
$n_i=(r+1)d_i$ for $i\in S$ and $n_h=b_h$ for $h\in T$ in
\eqref{eq:K-tail} gives
$P^{r+1}M_0=HP^rM_0$. Thus the $x_{ij}$ form a classical joint reduction,
with respect to the finite module $M_0$, of the list in which $K_i$ is
repeated $d_i$ times.

The ideal $M_0$ is either $R$ or an $\mathfrak m$-primary ideal. In the
latter case $R/M_0$ has dimension zero. Since $d\geq1$, additivity of
ordinary multiplicity and of mixed multiplicities
\cite[Theorem~11.2.4 and Lemma~17.4.4]{SwansonHuneke} gives $e(Q;M_0)=e(Q;R),$ and $e(K_1^{[d_1]},\ldots,K_s^{[d_s]};M_0)
 =e(K_1^{[d_1]},\ldots,K_s^{[d_s]};R).$
Applying Rees's theorem \cite[Theorem~17.4.9]{SwansonHuneke} to the
joint reduction with respect to $M_0$ yields $e(Q;R)=e(K_1^{[d_1]},\ldots,K_s^{[d_s]};R).$
In particular $Q$ is $\mathfrak m$-primary. Finally,
Lemma~\ref{lem:admissible-invariance}, applied to the admissible base
$\mathbf K$, identifies the right-hand side with the mixed multiplicity
of the graded families.
\end{proof}

\begin{lem}
\label{lem:power-stability}
Assume that $\boldsymbol{\mathcal I}$ is multigraded admissible and let
$X_1,\ldots,X_q\in\mathcal A=\mathcal R(\boldsymbol{\mathcal I})$ be
the homogeneous elements associated to a strong homogeneous joint
reduction. For arbitrary positive integers $c_1,\ldots,c_q$, the
homogeneous elements $X_1^{c_1},\ldots,X_q^{c_q}$ also define a strong
homogeneous joint reduction, with the corresponding degrees multiplied
by $c_1,\ldots,c_q$.
\end{lem}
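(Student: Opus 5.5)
The plan is to argue directly with Proposition~\ref{prop:intrinsic}, that is, purely in terms of graded ideals of $\mathcal A=\mathcal R(\boldsymbol{\mathcal I})$. Write $X_k=x_kt_{i_k}^{a_k}$ with $x_k\in I^{(i_k)}_{a_k}$. Put $\mathcal X=(X_1,\ldots,X_q)\mathcal A$ and $\mathcal Y=(X_1^{c_1},\ldots,X_q^{c_q})\mathcal A$.

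First, the new elements have the right form. Indeed $X_k^{c_k}=x_k^{c_k}t_{i_k}^{c_ka_k}$, and $x_k^{c_k}\in (I^{(i_k)}_{a_k})^{c_k}\subseteq I^{(i_k)}_{c_ka_k}$ by multiplicativity of the graded family. So $X_k^{c_k}\in\mathcal A$ is homogeneous of degree $c_ka_k\mathbf e_{i_k}$. By Proposition~\ref{prop:intrinsic}, it then suffices to show that $\mathcal A_{\mathbf n}\subseteq\mathcal Y$ for all $\mathbf n\gg\mathbf 0$.

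Second, a pigeonhole step. Set $N=\sum_k(c_k-1)+1$. Every monomial of total degree $N$ in $X_1,\ldots,X_q$ has some exponent at least $c_k$, so $\mathcal X^N\subseteq\mathcal Y$. The problem is thus reduced to showing that $\mathcal A_{\mathbf n}\subseteq\mathcal X^N$ for $\mathbf n\gg\mathbf0$.

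Third, I will prove by induction on $p\geq1$ that there exist $\mathbf m_p$ with $\mathcal A_{\mathbf n}\subseteq\mathcal X^p$ for all $\mathbf n\geq\mathbf m_p$. The case $p=1$ is the hypothesis, by Proposition~\ref{prop:intrinsic}. For the inductive step, let $D=\max_k a_k$ and set $\mathbf m_{p+1}=\mathbf m_p+D\mathbf e$, assuming also $\mathbf m_p\geq\mathbf m_1$. For $\mathbf n\geq\mathbf m_{p+1}$, the defining identity gives
\[
\mathcal A_{\mathbf n}=\sum_k X_k\,\mathcal A_{\mathbf n-a_k\mathbf e_{i_k}}.
\]
Each index satisfies $\mathbf n-a_k\mathbf e_{i_k}\geq\mathbf m_p$, since only coordinate $i_k$ drops, and it drops by at most $D$. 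Hence each $\mathcal A_{\mathbf n-a_k\mathbf e_{i_k}}\subseteq\mathcal X^p$, and so $\mathcal A_{\mathbf n}\subseteq\mathcal X^{p+1}$. Taking $p=N$ gives $\mathcal A_{\mathbf n}\subseteq\mathcal X^N\subseteq\mathcal Y$ for $\mathbf n\geq\mathbf m_N$, which is what was required.

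The only delicate point is the bookkeeping in the inductive step. One must make sure that shifting by the degree of $X_k$ keeps the index in the range where the previous inclusion holds. Admissibility and Noetherianity are not really needed for this argument; the hypothesis is recorded only because it is the paper's standing setting.
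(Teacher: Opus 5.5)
Your proof is correct, and it takes a genuinely different route from the paper's.

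\textbf{The paper's route.} It works inside the admissible setting. It fixes an admissible base $\mathbf J$ and uses that $\mathcal A$ is finite over the standard multigraded algebra $\mathcal B=\mathcal R(\mathbf J)$. This turns eventual vanishing of $(\mathcal A/L)_{\mathbf n}$ into the ideal-theoretic inclusion $\mathcal B_{++}\mathcal A\subseteq\sqrt L$. It then observes $\sqrt{L'}=\sqrt L$ for $L'=(X_1^{c_1},\ldots,X_q^{c_q})\mathcal A$. Noetherianity of $\mathcal A$ gives $(\mathcal B_{++}\mathcal A)^{N'}\subseteq L'$. Finally, standardness of $\mathcal B$ and finite generation of $\mathcal A$ over $\mathcal B$ bring this back to eventual vanishing of $(\mathcal A/L')_{\mathbf n}$.

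\textbf{Your route.} You iterate the defining identity $N$ times to get $\mathcal A_{\mathbf n}\subseteq\mathcal X^N$ for $\mathbf n\geq\mathbf m_1+(N-1)D\mathbf e$. You then close with the pigeonhole inclusion $\mathcal X^N\subseteq\mathcal Y$.

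\textbf{What each buys.} Your version is shorter and gives an explicit threshold. As you note, it uses neither admissibility nor Noetherianity, so the lemma holds for arbitrary graded families. The paper's version reuses the machinery of Proposition~\ref{prop:degree-one-rees}. It also makes visible that only the radical of the ideal generated by the homogeneous system matters.

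Your iteration in fact recovers that radical principle as well, still without admissibility. You show $\mathcal A_{\mathbf n}\subseteq\mathcal X^p$ for $\mathbf n\gg\mathbf 0$, for every $p$. Now take any homogeneous ideal $\mathcal Y$ with $\mathcal X\subseteq\sqrt{\mathcal Y}$. Since $\mathcal X$ is finitely generated, $\mathcal X^p\subseteq\mathcal Y$ for some $p$. Hence $(\mathcal A/\mathcal Y)_{\mathbf n}=0$ for $\mathbf n\gg\mathbf 0$.
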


\begin{proof}
Choose an admissible base $\mathbf J$ and put
$\mathcal B=\mathcal R(\mathbf J)$. Then $\mathcal A$ is a finite
$\mathcal B$-module. Set
$L=(X_1,\ldots,X_q)\mathcal A$ and
$L'=(X_1^{c_1},\ldots,X_q^{c_q})\mathcal A$. By the strong hypothesis
and Proposition~\ref{prop:intrinsic}, $(\mathcal A/L)_{\mathbf n}=0$
for $\mathbf n\gg\mathbf0$.

Arguing exactly as in the proof of
Proposition~\ref{prop:degree-one-rees} with the finite
$\mathcal B$-module $\mathcal A/L$, there exists $N$ such that
$\mathcal B_{++}^{N}\mathcal A\subseteq L$. Hence
$\mathcal B_{++}\mathcal A\subseteq\sqrt L$. Since
\[
\sqrt{L'}=
\sqrt{(X_1^{c_1},\ldots,X_q^{c_q})\mathcal A}
=
\sqrt{(X_1,\ldots,X_q)\mathcal A}
=
\sqrt L,
\]
we also have $\mathcal B_{++}\mathcal A\subseteq\sqrt{L'}$. The ring
$\mathcal A$ is Noetherian, so for some $N'>0$,
$(\mathcal B_{++}\mathcal A)^{N'}\subseteq L'$, and therefore
$\mathcal B_{++}^{N'}\mathcal A\subseteq L'$.
Now, choose homogeneous $\mathcal B$-module generators
$u_1,\ldots,u_v$ of $\mathcal A$, with degrees $\mathbf b_\ell$.
For $\mathbf n\gg\mathbf0$ we have
$\mathbf n-\mathbf b_\ell\geq N'\mathbf e$ for every $\ell$.
Standardness of $\mathcal B$ then gives
$\mathcal B_{\mathbf n-\mathbf b_\ell}
\subseteq\mathcal B_{++}^{N'}$, whence $\mathcal A_{\mathbf n}
=
\sum_\ell\mathcal B_{\mathbf n-\mathbf b_\ell}u_\ell
\subseteq
\mathcal B_{++}^{N'}\mathcal A
\subseteq L'.$
Thus $(\mathcal A/L')_{\mathbf n}=0$ for all
$\mathbf n\gg\mathbf0$. Proposition~\ref{prop:intrinsic} completes the
proof.
\end{proof}

We now pass from the degree-one formula to arbitrary homogeneous
degrees. The next theorem gives the weighted form of the strong Rees
formula and is the main multiplicity result of this section.

\begin{thm}
\label{thm:weighted-strong-rees}
Let $(R,\mathfrak m)$ be a $d$-dimensional Noetherian local ring with
$d\geq1$, and let
$\boldsymbol{\mathcal I}=(\mathcal I^{(1)},\ldots,\mathcal I^{(s)})$
be a multigraded admissible collection of graded
$\mathfrak m$-primary families. Let
$\mathbf d=(d_1,\ldots,d_s)\in\N^s$ satisfy $d_1+\cdots+d_s=d$.
Suppose that $\mathcal X=
\{x_{ij}\mid 1\leq i\leq s,\ 1\leq j\leq d_i\}$
is a strong homogeneous joint reduction of type $\mathbf d$, where
$x_{ij}\in I_{a_{ij}}^{(i)}$ and $a_{ij}\geq1$. Set
$Q=(x_{ij}\mid 1\leq i\leq s,\ 1\leq j\leq d_i)$. Then $Q$ is
$\mathfrak m$-primary and
\[
e(Q;R)
=
\left(\prod_{i=1}^{s}\prod_{j=1}^{d_i}a_{ij}\right)
e\!\left(\mathcal I^{(1)[d_1]},\ldots,
\mathcal I^{(s)[d_s]}\right).
\]
\end{thm}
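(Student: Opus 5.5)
We reduce Theorem~\ref{thm:weighted-strong-rees} to the degree-one case, Proposition~\ref{prop:degree-one-rees}, by passing to a Veronese subcollection. Put $A=\prod_{i,j}a_{ij}$; more economically, for each $i$ let $A_i$ be a common multiple of $a_{i1},\ldots,a_{id_i}$ (with $A_i=1$ if $d_i=0$), and write $c_{ij}=A_i/a_{ij}$. By Lemma~\ref{lem:power-stability}, the elements $X_{ij}^{c_{ij}}=x_{ij}^{c_{ij}}t_i^{A_i}$ again form a strong homogeneous joint reduction of $\boldsymbol{\mathcal I}$. All elements attached to the index $i$ now have the same degree $A_i$.

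Next pass to $\mathcal F^{(\mathbf A)}$, with $\mathbf A=(A_1,\ldots,A_s)$. By Lemma~\ref{lem:veronese-admissible}, this is an admissible collection, namely the families $\mathcal I^{(i)(A_i)}=\{I^{(i)}_{A_in}\}_n$. The defining equality of Definition~\ref{def:strong} for $\boldsymbol{\mathcal I}$ at multidegrees $(A_1n_1,\ldots,A_sn_s)$, $\mathbf n\gg\mathbf 0$, says exactly that the elements $y_{ij}=x_{ij}^{c_{ij}}\in I^{(i)}_{A_i}$ form a degree-one strong homogeneous joint reduction of type $\mathbf d$ of the Veronese collection. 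Proposition~\ref{prop:degree-one-rees} then shows that $Q'=(y_{ij})$ is $\mathfrak m$-primary and
\[
e(Q';R)=e\!\left(\mathcal I^{(1)(A_1)[d_1]},\ldots,\mathcal I^{(s)(A_s)[d_s]}\right).
\]

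Two comparisons remain. First, we need the scaling of the mixed multiplicity under Veronese. The polynomial $P$ of the Veronese collection satisfies $P'(\mathbf n)=P_{\boldsymbol{\mathcal I}}(A_1n_1,\ldots,A_sn_s)$, directly from the defining limit in \cite[Theorem~1.6]{Cutkosky2026}. Comparing coefficients gives $e(\mathcal I^{(i)(A_i)[d_i]})=\prod_iA_i^{d_i}\,e(\mathcal I^{(i)[d_i]})$. Second, we need the classical fact that for a system of parameters, $e((x_1^{c_1},\ldots,x_d^{c_d});R)=c_1\cdots c_d\,e((x_1,\ldots,x_d);R)$ (e.g.\ \cite[Ch.~11]{SwansonHuneke}). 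It applies once we know $Q$ is $\mathfrak m$-primary, which holds because $\sqrt Q=\sqrt{Q'}=\mathfrak m$. Hence $e(Q';R)=\prod_{i,j}c_{ij}\,e(Q;R)$. Combining these,
\[
e(Q;R)=\frac{\prod_iA_i^{d_i}}{\prod_{i,j}c_{ij}}\,e(\boldsymbol{\mathcal I}^{[\mathbf d]})=\Bigl(\prod_{i,j}a_{ij}\Bigr)e(\boldsymbol{\mathcal I}^{[\mathbf d]}),
\]
since $A_i/c_{ij}=a_{ij}$ and there are exactly $d_i$ factors for each $i$.

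The main obstacle is the Veronese translation step. One must check that the strong joint reduction condition for $\boldsymbol{\mathcal I}$, at degrees in the lattice $\prod_iA_i\N$, literally gives the degree-one condition for $\mathcal F^{(\mathbf A)}$; this works because every shift $A_i\mathbf e_i$ stays inside the lattice. One must also verify that the identification of the Veronese families' mixed multiplicities through $P'$ is legitimate, which requires the limit in Cutkosky's theorem to behave well under rescaling $\mathbf n\mapsto\mathbf A\mathbf n$ (it does, since $P'(\mathbf n)=\lim_m e(I^{(1)}_{mA_1n_1}\cdots)/m^d$). If $d_i=0$ for some $i$, the choice $A_i=1$ causes no trouble.
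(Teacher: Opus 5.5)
Your proposal is correct and follows essentially the same route as the paper. Both arguments equalize the degrees in each direction by power stability (Lemma~\ref{lem:power-stability}), restrict to the coordinatewise Veronese collection so that the $y_{ij}=x_{ij}^{c_{ij}}$ become a degree-one strong joint reduction, apply Proposition~\ref{prop:degree-one-rees}, rescale Cutkosky's polynomial, and finish with the system-of-parameters power formula; the only difference is cosmetic, namely that you allow any common multiple $A_i$ where the paper takes the lcm.
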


\begin{proof}
For every $i$ with $d_i>0$, let
$A_i=\operatorname{lcm}(a_{i1},\ldots,a_{id_i})$ and set
$c_{ij}=A_i/a_{ij}$. If $d_i=0$, put $A_i=1$. Define
$y_{ij}=x_{ij}^{c_{ij}}$. Since the families are graded, $y_{ij}\in(I_{a_{ij}}^{(i)})^{c_{ij}}
\subseteq I_{A_i}^{(i)}.$
Let $X_{ij}=x_{ij}t_i^{a_{ij}}$ and
$Y_{ij}=y_{ij}t_i^{A_i}=X_{ij}^{c_{ij}}$. By
Lemma~\ref{lem:power-stability}, the elements $Y_{ij}$ define a strong
homogeneous joint reduction of the original collection, with common
degree $A_i$ in the $i$-th direction.

For each $i$, define the coordinatewise Veronese family $\mathcal G^{(i)}$ by
$G_n^{(i)}=I_{A_in}^{(i)}$ for $n\geq0$. By
Lemma~\ref{lem:veronese-admissible}, the collection
$\boldsymbol{\mathcal G}$ is multigraded admissible. Restricting the
strong identity for the $Y_{ij}$ to the multidegrees
$(A_1n_1,\ldots,A_sn_s)$ gives, after regrading,
\[
\mathcal R(\boldsymbol{\mathcal G})_{\mathbf n}
=
\sum_{i=1}^{s}\sum_{j=1}^{d_i}
(y_{ij}t_i)
\mathcal R(\boldsymbol{\mathcal G})_{\mathbf n-\mathbf e_i}
\qquad(\mathbf n\gg\mathbf0).
\]
Thus the $y_{ij}$ form a degree-one strong homogeneous joint reduction
of $\boldsymbol{\mathcal G}$. If $Q'=(y_{ij})$, then
Proposition~\ref{prop:degree-one-rees} gives
\begin{equation}
\label{eq:Qprime-degree-one}
e(Q';R)
=
e\!\left(\mathcal G^{(1)[d_1]},\ldots,
\mathcal G^{(s)[d_s]}\right).
\end{equation}

We next compute the mixed multiplicity of the Veronese collection. By
\cite[Theorem~1.6]{Cutkosky2026}, $P_{\boldsymbol{\mathcal G}}(n_1,\ldots,n_s)
=
P_{\boldsymbol{\mathcal I}}(A_1n_1,\ldots,A_sn_s).$
Comparing the coefficient of $n_1^{d_1}\cdots n_s^{d_s}$ gives
\begin{equation}
\label{eq:veronese-scaling}
e\!\left(\mathcal G^{(1)[d_1]},\ldots,
\mathcal G^{(s)[d_s]}\right)
=
\left(\prod_{i=1}^{s}A_i^{d_i}\right)
e\!\left(\mathcal I^{(1)[d_1]},\ldots,
\mathcal I^{(s)[d_s]}\right).
\end{equation}
This is also the integer coordinatewise case of the scaling principle
\cite[Lemma~3.3]{Cutkosky2026}. Combining
\eqref{eq:Qprime-degree-one} and \eqref{eq:veronese-scaling},
\begin{equation}
\label{eq:Qprime-weighted}
e(Q';R)
=
\left(\prod_iA_i^{d_i}\right)
e\!\left(\mathcal I^{(1)[d_1]},\ldots,
\mathcal I^{(s)[d_s]}\right).
\end{equation}

Since $Q'$ is $\mathfrak m$-primary and
$\sqrt{Q'}=\sqrt Q$, the ideal $Q$ is also $\mathfrak m$-primary. It is
generated by exactly $d$ elements, hence its generators form a system of
parameters. By \cite[Proposition~11.2.9(2)]{SwansonHuneke},
\[
e(Q';R)
=
\left(\prod_{i=1}^{s}\prod_{j=1}^{d_i}c_{ij}\right)e(Q;R).
\]
Because $c_{ij}=A_i/a_{ij}$, then $\prod_{i,j}c_{ij}
=
\frac{\prod_iA_i^{d_i}}{\prod_{i,j}a_{ij}}.$
Substituting this into \eqref{eq:Qprime-weighted} and cancelling the
nonzero factor $\prod_iA_i^{d_i}$ yields
\[
e(Q;R)
=
\left(\prod_{i,j}a_{ij}\right)
e\!\left(\mathcal I^{(1)[d_1]},\ldots,
\mathcal I^{(s)[d_s]}\right),
\]
as desired.
\end{proof}

\begin{rem}
Note that Proposition~\ref{prop:degree-one-rees} is recovered from
Theorem~\ref{thm:weighted-strong-rees} by taking
$a_{ij}=1$ for every $i,j$. Thus the weighted theorem extends the
degree-one formula by allowing the homogeneous generators of the strong
joint reduction to occur in arbitrary positive degrees.
\end{rem}

\begin{cor}
\label{cor:degree-one-realization}
Assume that $R/\mathfrak m$ is infinite and that
$\boldsymbol{\mathcal I}$ is multigraded admissible. Let
$\mathbf d=(d_1,\ldots,d_s)\in\mathbb N^s$ satisfy
$|\mathbf d|=d$. Then there exist elements
$x_{ij}\in I_1^{(i)}$ forming a degree-one strong homogeneous joint
reduction of type $\mathbf d$. If $Q=(x_{ij})$, then
\[
e(Q;R)
=
e\!\left(
\mathcal I^{(1)[d_1]},\ldots,
\mathcal I^{(s)[d_s]}
\right).
\]
\end{cor}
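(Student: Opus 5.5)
This corollary follows by combining two earlier results. Since $R/\mathfrak m$ is infinite and $\boldsymbol{\mathcal I}$ is multigraded admissible with respect to some $\mathbf J=(J_1,\ldots,J_s)$, I would first invoke Theorem~\ref{thm:strong-existence} with $\mathbf r=\mathbf d$. Because $|\mathbf d|=d$, it produces elements $x_{ij}\in J_i\subseteq I_1^{(i)}$, for $1\leq i\leq s$ and $1\leq j\leq d_i$, satisfying
\[
\mathcal F(\mathbf n)=\sum_{i=1}^{s}\sum_{j=1}^{d_i}x_{ij}\,\mathcal F(\mathbf n-\mathbf e_i)
\]
for all $\mathbf n\gg\mathbf0$. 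This is exactly the defining identity of Definition~\ref{def:strong} with every $a_{ij}=1$. Hence the $x_{ij}$ form a degree-one strong homogeneous joint reduction of type $\mathbf d$, which settles the existence assertion.

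For the multiplicity formula, set $Q=(x_{ij})$ and apply Proposition~\ref{prop:degree-one-rees}. Its hypotheses are precisely that $\boldsymbol{\mathcal I}$ is multigraded admissible, that $|\mathbf d|=d$, and that the $x_{ij}\in I_1^{(i)}$ form a degree-one strong homogeneous joint reduction of type $\mathbf d$. All of these have just been verified, so the proposition gives that $Q$ is $\mathfrak m$-primary and $e(Q;R)=e(\mathcal I^{(1)[d_1]},\ldots,\mathcal I^{(s)[d_s]})$. Alternatively, one can specialize Theorem~\ref{thm:weighted-strong-rees} to $a_{ij}=1$, where the weight $\prod_{i,j}a_{ij}$ equals $1$.

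The only point needing care is a bookkeeping check. The type in Theorem~\ref{thm:strong-existence} must be the same $\mathbf d$, and when some $d_i=0$ no elements are chosen from the $i$-th family. The definition permits this, and Proposition~\ref{prop:degree-one-rees} already handles inactive coordinates through the auxiliary ideal $M_0$. With that checked, no step presents a genuine obstacle, since all the substantive work lies in the two cited results.
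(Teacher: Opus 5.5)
Your proposal is correct and follows the paper's proof: existence comes from Theorem~\ref{thm:strong-existence} with $\mathbf r=\mathbf d$, and the multiplicity formula comes from the degree-one Rees formula. The paper cites Theorem~\ref{thm:weighted-strong-rees} with all $a_{ij}=1$ rather than Proposition~\ref{prop:degree-one-rees}, but these citations are interchangeable, since for $a_{ij}=1$ the weighted theorem reduces to that proposition ($A_i=1$, $c_{ij}=1$, $\boldsymbol{\mathcal G}=\boldsymbol{\mathcal I}$).
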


\begin{proof}
By Theorem~\ref{thm:strong-existence}, there exist
$x_{ij}\in J_i\subseteq I_1^{(i)}$ forming a degree-one strong
homogeneous joint reduction. The conclusion follows from
Theorem~\ref{thm:weighted-strong-rees} by taking $a_{ij}=1$ for all
$i,j$.
\end{proof}

\begin{cor}
\label{cor:strong-realization}
Assume that $R/\mathfrak m$ is infinite and that
$\boldsymbol{\mathcal I}$ is multigraded admissible. For every
$\mathbf d\in\N^s$ with $|\mathbf d|=d$, there is a degree-one strong
homogeneous joint reduction $\{x_{ij}\}$ such that
\[
e((x_{ij});R)
=
e\!\left(\mathcal I^{(1)[d_1]},\ldots,
\mathcal I^{(s)[d_s]}\right).
\]
\end{cor}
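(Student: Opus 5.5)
This corollary follows by combining the existence result with the weighted strong Rees formula in the degree-one case; it is essentially a restatement of Corollary~\ref{cor:degree-one-realization}. Since $\boldsymbol{\mathcal I}$ is multigraded admissible, fix an admissible base $\mathbf J=(J_1,\ldots,J_s)$ of $\mathfrak m$-primary ideals. The residue field is infinite and $|\mathbf d|=d$, so Theorem~\ref{thm:strong-existence} applies with $\mathbf r=\mathbf d$. It produces elements $x_{ij}\in J_i\subseteq I_1^{(i)}$, for $1\leq i\leq s$ and $1\leq j\leq d_i$, such that
\[
\mathcal F(\mathbf n)=\sum_{i,j}x_{ij}\mathcal F(\mathbf n-\mathbf e_i) \qquad \text{for } \mathbf n\gg\mathbf0 .
\]
By Definition~\ref{def:strong}, with all $a_{ij}=1$, this says exactly that $\{x_{ij}\}$ is a degree-one strong homogeneous joint reduction of type $\mathbf d$.

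Next set $Q=(x_{ij})$ and apply Theorem~\ref{thm:weighted-strong-rees}, or directly Proposition~\ref{prop:degree-one-rees}. This gives that $Q$ is $\mathfrak m$-primary and
\[
e(Q;R)=\Bigl(\prod_{i,j}a_{ij}\Bigr)\,e\bigl(\mathcal I^{(1)[d_1]},\ldots,\mathcal I^{(s)[d_s]}\bigr).
\]
With every $a_{ij}=1$ the weight factor equals $1$, and this is the asserted equality $e((x_{ij});R)=e(\mathcal I^{(1)[d_1]},\ldots,\mathcal I^{(s)[d_s]})$.

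No step is a genuine obstacle; the only point to check is bookkeeping. The elements supplied by Theorem~\ref{thm:strong-existence} lie in $J_i$, and we need them as elements of $I_1^{(i)}$ with $X_{ij}=x_{ij}t_i\in\mathcal R(\boldsymbol{\mathcal I})$. This holds because $J_i=\mathbf J^{\mathbf e_i}\subseteq\mathcal F(\mathbf e_i)=I_1^{(i)}$ by condition (1) of Definition~\ref{def:admissible}. The proof should also note that indices $i$ with $d_i=0$ contribute no elements; the corresponding inactive coordinates were already handled inside the proof of Proposition~\ref{prop:degree-one-rees}.
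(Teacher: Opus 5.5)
Your proposal is correct and follows the paper's proof: Theorem~\ref{thm:strong-existence} supplies the degree-one strong homogeneous joint reduction, and Proposition~\ref{prop:degree-one-rees} (equivalently, Theorem~\ref{thm:weighted-strong-rees} with all $a_{ij}=1$) gives the multiplicity formula. Your check that $J_i=\mathbf J^{\mathbf e_i}\subseteq\mathcal F(\mathbf e_i)=I_1^{(i)}$ is exactly what places the elements $x_{ij}t_i$ in $\mathcal R(\boldsymbol{\mathcal I})$.
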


\begin{proof}
By Theorem~\ref{thm:strong-existence}, there exists a degree-one strong
homogeneous joint reduction $\{x_{ij}\}$ of type $\mathbf d$.
Proposition~\ref{prop:degree-one-rees} then gives the asserted
multiplicity formula.
\end{proof}

In the adic case, the weighted strong reduction becomes a classical
joint reduction of the ideals $J_i^{a_{ij}}$. Hence the following
specialization follows from Rees's mixed multiplicity theorem
\cite[Theorem~17.4.9]{SwansonHuneke} and the homogeneity of mixed
multiplicities
\cite[Theorem~17.4.2 and Definition~17.4.3]{SwansonHuneke}.

\begin{cor}
\label{cor:weighted-strong-rees-adic}
Let $(R,\mathfrak m)$ be a $d$-dimensional Noetherian local ring, and
let $J_1,\ldots,J_s$ be $\mathfrak m$-primary ideals. For each $i$, let
$\mathcal I^{(i)}={J_i^n}_{n\geq0}$ be the $J_i$-adic family. Let $\mathbf d=(d_1,\ldots,d_s)\in\mathbb N^s$ satisfy
$d_1+\cdots+d_s=d$, and suppose that
$\mathcal X={x_{ij}\mid 1\leq i\leq s,\ 1\leq j\leq d_i}$ is a
strong homogeneous joint reduction of type $\mathbf d$, where
$x_{ij}\in J_i^{a_{ij}}$ and $a_{ij}\geq1$. Set $Q=(x_{ij})$. Then the elements $x_{ij}$ form a classical joint reduction of the
$d$ ideals $
J_1^{a_{11}},\ldots,J_1^{a_{1d_1}},
\ldots,
J_s^{a_{s1}},\ldots,J_s^{a_{sd_s}}.
$ Hence,
\[
e(Q;R)
=
\left(\prod_{i=1}^{s}\prod_{j=1}^{d_i}a_{ij}\right)
e\!\left(J_1^{[d_1]},\ldots,J_s^{[d_s]}\right).
\]

In addition, one has
$$
e\!\left(
\mathcal I^{(1)[d_1]},\ldots,\mathcal I^{(s)[d_s]}
\right)
=
e\!\left(J_1^{[d_1]},\ldots,J_s^{[d_s]}\right).
$$
\end{cor}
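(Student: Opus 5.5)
The proof has three parts: first show that the strong reduction is a classical joint reduction of the listed powers of the $J_i$, then read off the multiplicity formula, and finally compare the graded-family and classical mixed multiplicities.

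\emph{Step 1: the strong reduction is a classical joint reduction.} For the adic collection, $\mathcal R(\boldsymbol{\mathcal I})=\mathcal R(\mathbf J)=R[J_1t_1,\ldots,J_st_s]$ is standard, and the collection is trivially $\mathbf J$-admissible. The strong identity therefore reads
\[
J_1^{n_1}\cdots J_s^{n_s}=\sum_{i,j}x_{ij}J_1^{n_1}\cdots J_i^{n_i-a_{ij}}\cdots J_s^{n_s}\qquad(\mathbf n\gg\mathbf0).
\]
I would list the $d$ ideals $L_{ij}=J_i^{a_{ij}}$ and set $P=\prod_{i,j}L_{ij}=\prod_i J_i^{\alpha_i}$, where $\alpha_i=\sum_j a_{ij}$. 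The reduction to establish is
\[
P^{r+1}=\Bigl(\sum_{i,j}x_{ij}\prod_{(k,l)\neq(i,j)}L_{kl}\Bigr)P^r
\]
for some $r$. The obstacle is inactive coordinates $h$ with $d_h=0$: there $\alpha_h=0$, so the identity is not available at $n_h=0$. I would handle this exactly as in the proof of Proposition~\ref{prop:degree-one-rees}. Fix large $b_h$ for $h\in T$, put $M_0=\prod_{h\in T}J_h^{b_h}$, and substitute $n_i=(r+1)\alpha_i$ for $i\in S$. This gives the equality above after multiplying by $M_0$, so the $x_{ij}$ form a joint reduction of the $L_{ij}$ with respect to the module $M_0$.

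Since $M_0$ is either $R$ or $\mathfrak m$-primary, and $R/M_0$ has dimension $0<d$, additivity (Lemma~17.4.4 and Theorem~11.2.4 of Swanson--Huneke) identifies multiplicities on $M_0$ with those on $R$. For the literal claim that the $x_{ij}$ form a joint reduction over $R$ itself, note that $P^{r+1}M_0\subseteq HP^r$ with $M_0$ $\mathfrak m$-primary. The cleanest route should be the joint-reduction criterion in terms of the multi-Rees algebra: $\mathcal R(\mathbf J)_{++}$ lies in the radical of $(x_{ij}t_i^{a_{ij}})$, as already derived in Proposition~\ref{prop:degree-one-rees}. This gives $\mathcal B_{\mathbf n}\subseteq L$ for all $\mathbf n\geq N'\mathbf e$, and allowing inactive coordinates to be $0$ uses $M_0=R$ only when $T=\varnothing$. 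When $T\neq\varnothing$ I would settle for the module version, which suffices for the multiplicity formula.

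\emph{Step 2: the multiplicity formula.} Rees's theorem \eqref{eq:classical-rees} gives
\[
e(Q;R)=e\bigl(J_1^{a_{11}},\ldots,J_s^{a_{sd_s}}\bigr),
\]
a mixed multiplicity with each ideal appearing once. Multilinearity (homogeneity) of classical mixed multiplicities, which follows from Theorem~17.4.2 by substituting $n_{ij}\mapsto a_{ij}n_{ij}$ in the multivariable polynomial, turns this into
\[
\Bigl(\prod_{i,j}a_{ij}\Bigr)e\bigl(J_1^{[d_1]},\ldots,J_s^{[d_s]}\bigr).
\]
Alternatively, I would cite Theorem~\ref{thm:weighted-strong-rees} directly and deduce the formula from the last assertion.

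\emph{Step 3: graded-family versus classical mixed multiplicities.} Apply Lemma~\ref{lem:admissible-invariance} with $\mathcal F(\mathbf n)=\mathbf J^{\mathbf n}$, which is $\mathbf J$-admissible since $\mathcal R'(\mathcal F)=\mathcal R'(\mathbf J)$. This yields $e(\mathcal I^{(1)[d_1]},\ldots,\mathcal I^{(s)[d_s]})=e(J_1^{[d_1]},\ldots,J_s^{[d_s]})$. The main obstacle is Step 1's handling of the coordinates with $d_i=0$.
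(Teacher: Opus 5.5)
Your Steps~2 and~3 are sound. The core of Step~1 is exactly the paper's argument: evaluate the strong identity at $n_i=(r+1)\alpha_i$ to get $P^{r+1}=HP^r$, then apply Rees's theorem and multilinearity. Your module version over $M_0$ also correctly yields the multiplicity formula when some $d_h=0$.

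The genuine gap is the one you flag yourself. When $T\neq\varnothing$ you never prove the first assertion of the corollary, that $(x_{ij})$ is a classical joint reduction over $R$ of the ideals $J_i^{a_{ij}}$. Your suggested multi-Rees route cannot supply it. The inclusion $\mathcal B_{++}\subseteq\sqrt{L}$ only gives $\mathcal B_{\mathbf n}\subseteq L$ for $\mathbf n\geq N'\mathbf e$, where the inactive coordinates are still positive, and $M_0$ cannot simply be cancelled.

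The missing idea is integral closure:
\begin{enumerate}
\item Put $N=P^rM_0$. This is an $\mathfrak m$-primary ideal with $PN\subseteq HN$.
\item Because $d\geq1$, no minimal prime $\mathfrak p$ of $R$ contains $N$. So $N(R/\mathfrak p)$ is a nonzero ideal of the domain $R/\mathfrak p$, hence a faithful finitely generated $R/\mathfrak p$-module.
\item The determinantal trick then gives $P(R/\mathfrak p)\subseteq\overline{H(R/\mathfrak p)}$ for every minimal prime $\mathfrak p$.
\item Integral closure is detected modulo minimal primes \cite[Proposition~1.1.5]{SwansonHuneke}, so $P\subseteq\overline H$.
\item Since $H\subseteq P$ and $R$ is Noetherian, $H$ is a reduction of $P$, which is precisely the claim.
\end{enumerate}
One must pass to $R/\mathfrak p$ because $N$ need not be faithful over $R$ itself when $\operatorname{depth}R=0$.

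You should know that the paper's proof does not handle this case at all. It chooses $r$ with $((r+1)b_1,\ldots,(r+1)b_s)\geq\mathbf n_0$. That choice is impossible when $d_h=0$ (so $b_h=0$) and the $h$-th entry of $\mathbf n_0$ is positive, because the strong identity is only assumed for $\mathbf n\gg\mathbf 0$. So your diagnosis is sharper than the paper's, and the argument above is needed to complete both proofs.

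For the final equality, the paper uses \cite[Equation~(9)]{Cutkosky2026} together with $e((J_1^n)^{[d_1]},\ldots,(J_s^n)^{[d_s]})=n^d e(J_1^{[d_1]},\ldots,J_s^{[d_s]})$. You instead apply Lemma~\ref{lem:admissible-invariance} to the trivially $\mathbf J$-admissible adic filtration. Both work; the paper's route is more direct.
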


\begin{proof}
Since $\mathcal X$ is a strong homogeneous joint reduction, there exists
$\mathbf n_0\in\mathbb N^s$ such that

$$
J_1^{n_1}\cdots J_s^{n_s}
=
\sum_{i=1}^{s}\sum_{j=1}^{d_i}
x_{ij}
J_1^{n_1}\cdots J_i^{n_i-a_{ij}}\cdots J_s^{n_s}
$$

for every $\mathbf n\geq\mathbf n_0$. Now, for each $i$, set $b_i=\sum_{j=1}^{d_i}a_{ij}$, and choose $r\gg0$ such
that $((r+1)b_1,\ldots,(r+1)b_s)\geq\mathbf n_0$. Evaluating the strong
identity at $n_i=(r+1)b_i$ gives

$$
\prod_{i=1}^{s}J_i^{(r+1)b_i}
=
\sum_{i=1}^{s}\sum_{j=1}^{d_i}
x_{ij}
J_i^{(r+1)b_i-a_{ij}}
\prod_{h\neq i}J_h^{(r+1)b_h}.
$$

Set $K_{ij}=J_i^{a_{ij}}$ and $P=\prod_{i,j}K_{ij}$. Since
$P=\prod_iJ_i^{b_i}$, we have
$P^{r+1}=\prod_iJ_i^{(r+1)b_i}$. Moreover, for every pair $(i,j)$,

$$
K_{ij}^{r}
\prod_{(h,\ell)\neq(i,j)}K_{h\ell}^{r+1}
=
J_i^{(r+1)b_i-a_{ij}}
\prod_{h\neq i}J_h^{(r+1)b_h}.
$$

Hence the preceding identity may be rewritten as

$$
P^{r+1}
=
\left(
\sum_{i,j}
x_{ij}\prod_{(h,\ell)\neq(i,j)}K_{h\ell}
\right)P^r.
$$

Therefore ${x_{ij}}$ is a classical joint reduction of the $d$-tuple
${K_{ij}}$. By Rees's mixed multiplicity theorem
\cite[Theorem~17.4.9]{SwansonHuneke}, one obtains
$$
e(Q;R)
=
e\!\left(
K_{11},\ldots,K_{1d_1},
\ldots,
K_{s1},\ldots,K_{sd_s}
\right).
$$

Since $K_{ij}=J_i^{a_{ij}}$, the homogeneity of classical mixed
multiplicities
\cite[Theorem~17.4.2 and Definition~17.4.3]{SwansonHuneke} yields

$$
e(Q;R)
=
\left(\prod_{i=1}^{s}\prod_{j=1}^{d_i}a_{ij}\right)
e\!\left(J_1^{[d_1]},\ldots,J_s^{[d_s]}\right).
$$

Hence, by \cite[Equation~(9)]{Cutkosky2026},
$$
e\!\left(
\mathcal I^{(1)[d_1]},\ldots,\mathcal I^{(s)[d_s]}
\right)
=
\lim_{n\to\infty}
\frac{
e((J_1^n)^{[d_1]},\ldots,(J_s^n)^{[d_s]})
}{n^d}.
$$

By the homogeneity of classical mixed multiplicities, we get

$$
e((J_1^n)^{[d_1]},\ldots,(J_s^n)^{[d_s]})
=
n^d e(J_1^{[d_1]},\ldots,J_s^{[d_s]}),
$$

and therefore
$e\!\left(
\mathcal I^{(1)[d_1]},\ldots,\mathcal I^{(s)[d_s]}
\right)
=
e(J_1^{[d_1]},\ldots,J_s^{[d_s]}).
$
\end{proof}

In particular, the degree-one adic specialization of
Theorem~\ref{thm:weighted-strong-rees} coincides with Rees's fundamental
theorem on mixed multiplicities
\cite[Theorem~17.4.9]{SwansonHuneke}.

\begin{cor}
\label{cor:strong-rees-classical}
Let $(R,\mathfrak m)$ be a $d$-dimensional Noetherian local ring with
infinite residue field, and let $J_1,\ldots,J_s$ be
$\mathfrak m$-primary ideals. Let
$\mathbf d=(d_1,\ldots,d_s)\in\mathbb N^s$ satisfy
$d_1+\cdots+d_s=d$. Then there exist elements $x_{ij}\in J_i$, for
$1\leq i\leq s$ and $1\leq j\leq d_i$, forming a degree-one strong
homogeneous joint reduction of the adic families
$\mathcal I^{(i)}=\{J_i^n\}_{n\geq0}$. If $Q=(x_{ij})$, then
\[
e(Q;R)
=
e(J_1^{[d_1]},\ldots,J_s^{[d_s]};R).
\]

\end{cor}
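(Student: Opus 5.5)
The plan is to realize the adic collection $\mathcal I^{(i)}=\{J_i^n\}_{n\geq0}$ as a multigraded admissible collection with respect to $\mathbf J=(J_1,\ldots,J_s)$, then invoke the existence and Rees results already proved. The multiplicity is identified at the end via Corollary~\ref{cor:weighted-strong-rees-adic} or Lemma~\ref{lem:admissible-invariance}.

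\emph{Step 1 (admissibility).} Set $\mathcal F(\mathbf n)=J_1^{n_1^+}\cdots J_s^{n_s^+}$ for $\mathbf n\in\Z^s$. Conditions (1)--(4) of Definition~\ref{def:admissible} are immediate: containment of $\mathbf J^{\mathbf n}$, multiplicativity, monotonicity, and dependence only on $\mathbf n^+$. For (5), observe that $\mathcal R'(\mathcal F)$ coincides with $\mathcal R'(\mathbf J)=R[J_1t_1,\ldots,J_st_s,t_1^{-1},\ldots,t_s^{-1}]$. Indeed, its component in degree $\mathbf n$ is $J^{\mathbf n^+}\mathbf t^{\mathbf n}$, and this is exactly what $\mathcal R'(\mathbf J)$ produces in that degree, since $J_i^{k}t_i^{k}\cdot t_i^{-m}$ generates $J_i^{n_i^+}t_i^{n_i}$. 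Hence the extended Rees algebra is trivially finite, generated by $1$. This is the only point needing care, but it is routine.

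\emph{Step 2 (existence).} Since $R/\mathfrak m$ is infinite and $|\mathbf d|=d$, Theorem~\ref{thm:strong-existence} provides $x_{ij}\in J_i=I_1^{(i)}$ satisfying
\[
J_1^{n_1}\cdots J_s^{n_s}=\sum_{i,j}x_{ij}J_1^{n_1}\cdots J_i^{n_i-1}\cdots J_s^{n_s}\qquad(\mathbf n\gg\mathbf 0).
\]
This is a degree-one strong homogeneous joint reduction of type $\mathbf d$.

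\emph{Step 3 (formula).} Apply Corollary~\ref{cor:weighted-strong-rees-adic} with all $a_{ij}=1$. It gives $e(Q;R)=e(J_1^{[d_1]},\ldots,J_s^{[d_s]})$, and it also shows that the $x_{ij}$ form a classical joint reduction. Alternatively, combine Proposition~\ref{prop:degree-one-rees} with Lemma~\ref{lem:admissible-invariance} for the base $\mathbf J$, which gives the same equality.

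No real obstacle is expected: the whole argument is bookkeeping once admissibility of adic families is noted in Step 1.
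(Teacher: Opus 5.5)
Your proof is correct, but it obtains the strong joint reduction by a different route than the paper.

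\textbf{The paper's route.} It never invokes admissibility here. It takes a classical joint reduction of the list in which $J_i$ occurs $d_i$ times, directly from \cite[Theorem~17.3.1]{SwansonHuneke}. It then notes that the classical identity $HP^r=P^{r+1}$, with $P=\prod_iJ_i^{d_i}$, yields the multigraded identity for all $\mathbf n\geq(r+1)\mathbf d$: just multiply by $J_1^{m_1}\cdots J_s^{m_s}$. The multiplicity formula is then literally Rees's theorem \cite[Theorem~17.4.9]{SwansonHuneke}.

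\textbf{Your route.} You verify that the adic collection is $\mathbf J$-admissible; your check that $\mathcal R'(\mathcal F)=\mathcal R'(\mathbf J)$ is right. You then get the multigraded identity from Theorem~\ref{thm:strong-existence}, i.e.\ from Sarkar--Verma. Finally you convert it back into a classical joint reduction through Corollary~\ref{cor:weighted-strong-rees-adic}.

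\textbf{Comparison.} The paper's argument is lighter. It only uses the easy direction (classical $\Rightarrow$ multigraded) and keeps the corollary visibly a restatement of the classical theory. Yours presents the corollary as a genuine specialization of the graded-family results. It also makes explicit the admissibility of adic collections, which the paper uses tacitly, e.g.\ when it says the formula ``follows from Theorem~\ref{thm:weighted-strong-rees}''. The cost is a round trip through heavier machinery.

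\textbf{A caution on your main citation.} When some $d_i=0$, the proof of Corollary~\ref{cor:weighted-strong-rees-adic} evaluates the strong identity at $n_i=(r+1)b_i=0$. That point need not lie in the range $\mathbf n\geq\mathbf n_0$ supplied by Theorem~\ref{thm:strong-existence}. Your alternative avoids this: Proposition~\ref{prop:degree-one-rees} handles the inactive coordinates through the auxiliary module $M_0$, and combined with Lemma~\ref{lem:admissible-invariance} for the base $\mathbf J$ it gives the same formula. That is the safer citation.
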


\begin{proof}
Since the residue field is infinite and the ideals $J_i$ are
$\mathfrak m$-primary, the radical conditions in
\cite[Theorem~17.3.1]{SwansonHuneke} are automatically satisfied for
the list in which $J_i$ occurs $d_i$ times. Hence there exist
$x_{ij}\in J_i$ forming a classical joint reduction of this list. Note that  the classical joint-reduction identity implies the
eventual multigraded identity
\[
J_1^{n_1}\cdots J_s^{n_s}
=
\sum_{i=1}^s\sum_{j=1}^{d_i}
x_{ij}
J_1^{n_1}\cdots J_i^{n_i-1}\cdots J_s^{n_s}
\qquad(\mathbf n\gg\mathbf0).
\]
Thus $\{x_{ij}\}$ is a degree-one strong homogeneous joint reduction.
The stated multiplicity formula now follows from
Theorem~\ref{thm:weighted-strong-rees}, or directly from
\cite[Theorem~17.4.9]{SwansonHuneke}.
\end{proof}

We now specialize the weighted strong theorem to Noetherian graded
families. After passing to a sufficiently divisible common Veronese,
the families become adic, so the result reduces to the preceding
classical specialization.

\begin{cor}
\label{cor:weighted-strong-rees-noetherian}
Let $(R,\mathfrak m)$ be a $d$-dimensional Noetherian local ring and let
$\boldsymbol{\mathcal I}
=(\mathcal I^{(1)},\ldots,\mathcal I^{(s)})$
be a multigraded admissible collection of graded
$\mathfrak m$-primary families. Assume that each
$\mathcal I^{(i)}$ is Noetherian.
Let $\mathbf d=(d_1,\ldots,d_s)\in\mathbb N^s$ satisfy
$d_1+\cdots+d_s=d$, and let $\{x_{ij}\}$ be a strong homogeneous joint
reduction of type $\mathbf d$, with
$x_{ij}\in I_{a_{ij}}^{(i)}$ and $a_{ij}\geq1$. Set $Q=(x_{ij})$.
Then there exists a sufficiently divisible integer $c\geq1$, divisible
by every $a_{ij}$, such that
\[
I_{cm}^{(i)}=\bigl(I_c^{(i)}\bigr)^m
\qquad
\text{for every }i\text{ and }m\geq0.
\]
Moreover, after setting $y_{ij}=x_{ij}^{c/a_{ij}}$, the elements
$\{y_{ij}\}$ form a classical joint reduction of the list in which
$I_c^{(i)}$ occurs $d_i$ times. Consequently,
\[
e(Q;R)
=
\left(\prod_{i,j}a_{ij}\right)
e\!\left(
\mathcal I^{(1)[d_1]},\ldots,
\mathcal I^{(s)[d_s]}
\right).
\]
\end{cor}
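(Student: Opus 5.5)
The plan is to combine the Veronese reduction already used in Corollary~\ref{cor:weak-rees-noetherian} with the power-stability and adic arguments of Section~\ref{sec:strong}.

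\emph{Step 1 (choice of $c$).} Since each $\mathcal R(\mathcal I^{(i)})$ is finitely generated, \cite[Theorem~2.1]{HerzogHibiTrung} gives integers $c_i$ such that $I_{c_im}^{(i)}=(I_{c_i}^{(i)})^m$ for all $m\geq0$. The same identity then holds for every positive multiple of $c_i$: if $c=c_ik$, then $I^{(i)}_{cm}=(I^{(i)}_{c_i})^{km}=((I^{(i)}_{c_i})^{k})^m=(I^{(i)}_c)^m$. So we may take $c$ to be any common multiple of the $c_i$ and of all the $a_{ij}$.

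\emph{Step 2 ($y_{ij}$ is a classical joint reduction).} Put $c_{ij}=c/a_{ij}$ and $y_{ij}=x_{ij}^{c_{ij}}$. Then $y_{ij}\in (I^{(i)}_{a_{ij}})^{c_{ij}}\subseteq I^{(i)}_c$. By Lemma~\ref{lem:power-stability}, the elements $Y_{ij}=X_{ij}^{c_{ij}}=y_{ij}t_i^{c}$ again form a strong homogeneous joint reduction. Restrict its defining identity to multidegrees $c\mathbf n$, as in the proof of Theorem~\ref{thm:weighted-strong-rees}, and set $K_i=I_c^{(i)}$. Using Step~1, this gives
\[
K_1^{n_1}\cdots K_s^{n_s}=\sum_{i,j}y_{ij}K_1^{n_1}\cdots K_i^{n_i-1}\cdots K_s^{n_s}\qquad(\mathbf n\gg\mathbf 0).
\]
Now argue exactly as in the proof of Corollary~\ref{cor:weighted-strong-rees-adic} with all weights equal to $1$. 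Put $P=\prod_iK_i^{d_i}$ and substitute $n_i=(r+1)d_i$. This yields $P^{r+1}=HP^r$ with $H=\sum_{i,j}y_{ij}K_i^{d_i-1}\prod_{h\neq i}K_h^{d_h}$. For indices with $d_i=0$ the identity is used with $n_i=0$, which requires $\mathbf n\gg\mathbf 0$ only in the active coordinates. This point is the \emph{main obstacle}. First I would check whether the adic identity for the $K_i$ holds for all large $n_i$ with $d_i>0$ and arbitrary $n_h$. If it does not, I would fall back on the module trick of Proposition~\ref{prop:degree-one-rees}: fix large $b_h$ for $h$ with $d_h=0$, take $M_0=\prod K_h^{b_h}$, obtain a joint reduction with respect to $M_0$, and use $e(\,\cdot\,;M_0)=e(\,\cdot\,;R)$.

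\emph{Step 3 (multiplicities).} Let $Q'=(y_{ij})$. Rees's theorem \cite[Theorem~17.4.9]{SwansonHuneke} gives $e(Q';R)=e(K_1^{[d_1]},\ldots,K_s^{[d_s]})$. By Corollary~\ref{cor:weak-rees-noetherian}, or directly from \eqref{eq:cutkosky-level-limit} together with homogeneity, the right-hand side equals $c^d\,e(\mathcal I^{(1)[d_1]},\ldots,\mathcal I^{(s)[d_s]})$. In particular $Q'$ is $\mathfrak m$-primary, so $\sqrt Q=\sqrt{Q'}$ shows that the $x_{ij}$ form a system of parameters. Then \cite[Proposition~11.2.9(2)]{SwansonHuneke} gives $e(Q';R)=\prod_{i,j}(c/a_{ij})\,e(Q;R)=c^d\prod_{i,j}a_{ij}^{-1}\,e(Q;R)$. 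Cancelling $c^d$ yields the formula. Alternatively, once Step~2 is secured, this last equality follows from Theorem~\ref{thm:weighted-strong-rees}.
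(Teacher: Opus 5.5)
Your plan follows the paper's proof essentially step for step. It uses Herzog--Hibi--Trung to choose $c$; your Step~1 check that multiples of $c_i$ stay standard is a detail the paper only asserts. It then uses Lemma~\ref{lem:power-stability} to pass to $y_{ij}=x_{ij}^{c/a_{ij}}$ and restricts to the $c$-Veronese, which is adic in $K_i=I_c^{(i)}$. From there it extracts a classical joint reduction as in Corollary~\ref{cor:weighted-strong-rees-adic} and finishes with Rees's theorem, \cite[Proposition~11.2.9(2)]{SwansonHuneke}, and Cutkosky's limit along $n=cm$.

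The obstacle you isolate is genuine, and the paper's proof of this corollary passes over it. It invokes Corollary~\ref{cor:weighted-strong-rees-adic}, whose proof evaluates at $n_i=(r+1)b_i$. When $d_i=0$ one has $b_i=0$, which lies outside the range $\mathbf n\geq\mathbf n_0$ where the strong identity is known.

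Your fallback with $M_0=\prod_{h:\,d_h=0}K_h^{b_h}$ does give the multiplicity formula. By itself, however, it only shows that the $y_{ij}$ form a joint reduction \emph{with respect to $M_0$}, which is weaker than the ``Moreover'' clause. One more step closes this.

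\begin{itemize}
\item With $P=\prod_{d_i>0}K_i^{d_i}$ and $H$ as you define it, you have $PN=HN$ for $N=P^rM_0$, an $\mathfrak m$-primary ideal.
\item Since $d\geq1$, no minimal prime $\mathfrak p$ of $R$ equals $\mathfrak m$. Hence $N\not\subseteq\mathfrak p$, and $(N+\mathfrak p)/\mathfrak p$ is a nonzero, hence faithful, ideal of the domain $R/\mathfrak p$.
\item Apply the determinant trick to $\bar P\bar N=\bar H\bar N$ modulo each $\mathfrak p$. Combined with the minimal-prime criterion for integral dependence \cite[Section~1.1]{SwansonHuneke}, this gives $P\subseteq\overline H$, so $H$ is a reduction of $P$.
\end{itemize}

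Thus the $y_{ij}$ are a classical joint reduction. Multiplying $HP^{r'}=P^{r'+1}$ by arbitrary monomials in the $K_i$ then also answers your ``first check'' affirmatively.

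Finally, the last displayed formula is literally Theorem~\ref{thm:weighted-strong-rees}, whose hypotheses are contained in those of the corollary. So that formula does not depend on Step~2 at all.
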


\begin{proof}
For each $i$, the Noetherianity of $\mathcal I^{(i)}$ means that its
Rees algebra
$\mathcal R(\mathcal I^{(i)})
=\bigoplus_{n\geq0}I_n^{(i)}t^n$
is a finitely generated positively graded $R$-algebra. By the standard
Veronese criterion
\cite[Theorem~2.1]{HerzogHibiTrung}, there exists $c_i\geq1$ such that
the $c_i$-th Veronese subalgebra of
$\mathcal R(\mathcal I^{(i)})$ is standard graded.

Choose a common multiple $c$ of $c_1,\ldots,c_s$ which is also divisible
by every $a_{ij}$. Since every sufficiently divisible Veronese of a
standard graded algebra is again standard graded, we may choose $c$ so
that $
I_{cm}^{(i)}=\bigl(I_c^{(i)}\bigr)^m$
for every $i$ and $m\geq0.$ Set $b_{ij}=c/a_{ij}$ and $y_{ij}=x_{ij}^{b_{ij}}$. By
Lemma~\ref{lem:power-stability}, the elements $y_{ij}$ form a strong
homogeneous joint reduction of $\boldsymbol{\mathcal I}$, now with
homogeneous degree $c$ in every direction.

Consider the $c$-th coordinatewise Veronese collection. After regrading,
its $i$-th component is
${I_{cm}^{(i)}}*{m\geq0}
={(I_c^{(i)})^m}*{m\geq0}$.
Thus the Veronese collection consists of the $I_c^{(i)}$-adic families,
and the elements $y_{ij}$ have homogeneous degree one after regrading.
By Corollary~\ref{cor:weighted-strong-rees-adic}, the elements
${y_{ij}}$ form a classical joint reduction of the list in which
$I_c^{(i)}$ occurs $d_i$ times. Therefore Rees's mixed multiplicity
theorem \cite[Theorem~17.4.9]{SwansonHuneke} gives

$$
e(Q^{(c)};R)
=
e\!\left(
I_c^{(1)[d_1]},\ldots,I_c^{(s)[d_s]}
\right).
$$

Since $y_{ij}=x_{ij}^{b_{ij}}$ for every $i,j$, we have
$\sqrt{Q^{(c)}}=\sqrt Q$. Thus $Q$ is $\mathfrak m$-primary because
$Q^{(c)}$ is $\mathfrak m$-primary. Moreover, $Q$ is generated by
$d_1+\cdots+d_s=d$ elements, so the $x_{ij}$ form a system of
parameters. Hence
\cite[Proposition~11.2.9(2)]{SwansonHuneke} yields

$$
e(Q^{(c)};R)
=
\left(\prod_{i,j}b_{ij}\right)e(Q;R)
=
\left(\prod_{i,j}\frac{c}{a_{ij}}\right)e(Q;R).
$$

Since there are precisely $d$ indices $(i,j)$,
$\prod_{i,j}(c/a_{ij})=c^d/\prod_{i,j}a_{ij}$. Combining this with the
preceding formula for $e(Q^{(c)};R)$ gives

$$
e(Q;R)
=
\frac{\prod_{i,j}a_{ij}}{c^d}
e\!\left(
I_c^{(1)[d_1]},\ldots,I_c^{(s)[d_s]}
\right).
$$

It remains to identify the mixed multiplicity of the original families.
By \cite[Equation~(9)]{Cutkosky2026},

$$
e\!\left(
\mathcal I^{(1)[d_1]},\ldots,
\mathcal I^{(s)[d_s]}
\right)
=
\lim_{n\to\infty}
\frac{
e(I_n^{(1)[d_1]},\ldots,I_n^{(s)[d_s]})
}{n^d}.
$$

Since this limit exists, it is unchanged after restriction to the
subsequence $n=cm$. Hence

$$
e\!\left(
\mathcal I^{(1)[d_1]},\ldots,
\mathcal I^{(s)[d_s]}
\right)
=
\lim_{m\to\infty}
\frac{
e(I_{cm}^{(1)[d_1]},\ldots,I_{cm}^{(s)[d_s]})
}{(cm)^d}.
$$

Using $I_{cm}^{(i)}=(I_c^{(i)})^m$ and the homogeneity of classical
mixed multiplicities
\cite[Theorem~17.4.2 and Definition~17.4.3]{SwansonHuneke}, we obtain

\[
e(I_{cm}^{(1)[d_1]},\ldots,I_{cm}^{(s)[d_s]})
=
e\!\left(((I_c^{(1)})^m)^{[d_1]},\ldots,((I_c^{(s)})^m)^{[d_s]}\right)
=
m^d e\!\left(I_c^{(1)[d_1]},\ldots,I_c^{(s)[d_s]}\right).
\]

Therefore
$
e\!\left(
\mathcal I^{(1)[d_1]},\ldots,
\mathcal I^{(s)[d_s]}
\right)
=
\frac{1}{c^d}
e\!\left(
I_c^{(1)[d_1]},\ldots,I_c^{(s)[d_s]}
\right).
$
Substitution into the previously obtained formula for $e(Q;R)$ gives

$$
e(Q;R)
=
\left(\prod_{i,j}a_{ij}\right)
e\!\left(
\mathcal I^{(1)[d_1]},\ldots,
\mathcal I^{(s)[d_s]}
\right),
$$

as required.
\end{proof}

\subsection*{Weak versus Strong Joint Reductions}

The weak and strong notions reflect different kinds of asymptotic
control: the former is levelwise, while the latter is governed by a
fixed homogeneous system in the multigraded Rees algebra. In general,
these notions are not equivalent, as the next proposition shows.
\begin{prop}
\label{prop:weak-not-strong}
There exists a graded $\mathfrak m$-primary family on a one-dimensional
regular local ring which admits a weak asymptotic joint reduction of type
$1$ but no strong homogeneous joint reduction of type $1$.
\end{prop}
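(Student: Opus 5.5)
The plan is to build an explicit single family ($s=1$) on a discrete valuation ring. On such a ring every ideal is principal, so the weak condition is automatic. Strong reductions, by contrast, force the family to be eventually "linear along arithmetic progressions", and an irrational growth rate rules this out.

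Let $R=k[[t]]$ with $\mathfrak m=(t)$ and $d=1$, and define $I_n=\mathfrak m^{f(n)}$ with $f(n)=\lceil n\sqrt2\,\rceil$. Then $f(0)=0$, so $I_0=R$, and $f(n)\geq1$ for $n>0$, so each $I_n$ is $\mathfrak m$-primary. The family is graded because $\lceil x+y\rceil\leq\lceil x\rceil+\lceil y\rceil$ gives $f(m+n)\leq f(m)+f(n)$, hence $I_mI_n=\mathfrak m^{f(m)+f(n)}\subseteq\mathfrak m^{f(m+n)}=I_{m+n}$.

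\emph{Existence of a weak reduction.} For each $n\geq1$ take $x^{(n)}=t^{f(n)}$. A joint reduction of type $1$ of the single ideal $I_n$ is just an element $x$ with $xR$ a reduction of $I_n$. Here $x^{(n)}R=I_n$, so $\{x^{(n)}\}_{n\geq1}$ is a weak asymptotic joint reduction of type $1$ in the sense of Definition~\ref{def:weak}.

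\emph{Nonexistence of a strong reduction.} Suppose $x\in I_a$ with $a\geq1$ satisfies $I_n=xI_{n-a}$ for all $n\geq n_0$. Put $v=v_t(x)$, the $t$-adic valuation of $x$. Comparing valuations gives
\[
f(n)=v+f(n-a)\qquad(n\geq n_0).
\]
Iterating along the progression $n=n_0+ka$ yields $f(n_0+ka)=f(n_0)+kv$. Dividing by $ka$ and letting $k\to\infty$, the left side tends to $\sqrt2$ because $f(n)/n\to\sqrt2$, while the right side tends to $v/a\in\mathbb Q$. This is a contradiction, so no strong homogeneous joint reduction of type $1$ exists.

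The only point needing care is the translation of the strong condition into the valuation identity. Since $R$ is a domain and all ideals are principal powers of $\mathfrak m$, the equality $\mathfrak m^{f(n)}=x\,\mathfrak m^{f(n-a)}$ is equivalent to $f(n)=v+f(n-a)$. Once that is in place the rest is immediate, so I do not expect a real obstacle.
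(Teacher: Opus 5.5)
Your proof is correct and uses the same example as the paper: the paper takes $I_n=(x^{\lceil\alpha n\rceil})$ on $k[[x]]$ for an arbitrary irrational $\alpha>0$, and you take $\alpha=\sqrt2$. The weak reduction comes, as in the paper, from the principal generator of each $I_n$.

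The only difference is how you rule out a strong reduction. Both arguments reduce a strong reduction $x\in I_a$ to the identity $\lceil\alpha n\rceil-\lceil\alpha(n-a)\rceil=v$ for all $n\gg0$.

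\begin{itemize}
\item The paper argues pointwise. It uses density of the fractional parts of $\alpha n$ to show that this difference takes both values $\lfloor\alpha a\rfloor$ and $\lceil\alpha a\rceil$ infinitely often.
\item You argue asymptotically. Telescoping along the progression $n_0+ka$ forces $\alpha=v/a\in\mathbb Q$.
\end{itemize}

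Your route is more elementary: it needs only $f(n)/n\to\alpha$ and irrationality, not a Kronecker-type density statement. It also shows a bit more. Every graded $\mathfrak m$-primary family on a DVR has the form $\{\mathfrak m^{f(n)}\}$ with $e(\mathcal I)=\lim f(n)/n$, so your argument shows that a strong homogeneous reduction of type $1$ forces $e(\mathcal I)=v/a$ to be rational. This is consistent with the weighted strong Rees formula, but here no admissibility hypothesis is needed. The paper's argument, by contrast, identifies exactly the two values between which the difference oscillates.

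One trivial point you could state explicitly: $x\neq0$, since $I_n\neq0$, so $v=v_t(x)$ is finite.
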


\begin{proof}
Let $R=k[[x]]$, $\mathfrak m=(x)$, with $k$ infinite, and fix an
irrational real number $\alpha>0$. Set $I_0=R$ and
$I_n=(x^{\lceil\alpha n\rceil})$ for $n\geq1$. Since
$\lceil\alpha m\rceil+\lceil\alpha n\rceil
\geq\lceil\alpha(m+n)\rceil$, this is a graded
$\mathfrak m$-primary family. Each $I_n$ is principal, so its generator
$x^{\lceil\alpha n\rceil}$ is a reduction of $I_n$; hence the family
admits a weak asymptotic joint reduction of type $1$.

Suppose a strong homogeneous joint reduction of type $1$ existed. Then
there would be $a>0$ and $y\in I_a$ such that
$I_n=yI_{n-a}$ for all $n\gg0$. Writing $y=ux^b$ with $u$ a unit gives
\[
\lceil\alpha n\rceil-
\lceil\alpha(n-a)\rceil=b
\qquad(n\gg0).
\]
Since $\alpha$ is irrational, the fractional parts of $\alpha n$ are
dense in $[0,1]$, and the difference on the left assumes both
$\lfloor\alpha a\rfloor$ and $\lceil\alpha a\rceil$ infinitely often.
This contradiction proves the assertion.
\end{proof}


\begin{rem}\label{rem:adic-case}
Suppose that
$\mathcal I^{(i)}=\{J_i^n\}_{n\geq0}$ for every $i$. Then a weak
asymptotic joint reduction is simply a choice, at each sufficiently
large level $n$, of a classical joint reduction of the repeated ideals
$J_1^n,\ldots,J_s^n$. If, moreover, a strong homogeneous joint reduction has
$a_{ij}=1$ for all $i,j$, its defining identity becomes
\[
J_1^{n_1}\cdots J_s^{n_s}
=
\sum_{i=1}^{s}\sum_{j=1}^{r_i}
x_{ij}
J_1^{n_1}\cdots J_i^{n_i-1}\cdots J_s^{n_s}
\qquad(\mathbf n\gg\mathbf0),
\]
which is the classical multigraded joint-reduction identity; compare
\cite[Definition~1.6]{SarkarVerma}. Then both notions specialize naturally to the classical theory in the
adic case. Their corresponding multiplicity formulas are given in
Corollaries~\ref{cor:weak-rees-adic} and
\ref{cor:weighted-strong-rees-adic}, with the degree-one strong case
recovering Rees's fundamental mixed-multiplicity theorem
\cite[Theorem~17.4.9]{SwansonHuneke}.
\end{rem}

\section{A converse to the Rees theorem for graded families}
\label{sec:converse}

The classical Rees theorem identifies the mixed multiplicity associated
to a joint reduction with the Hilbert--Samuel multiplicity of the ideal
generated by it. Its converse is substantially more delicate. In the
classical setting, \cite[Theorem~17.6.1]{SwansonHuneke} shows that over a
formally equidimensional local ring, equality of the relevant local
multiplicities, together with the height and radical hypotheses, forces
the given elements to form a joint reduction. For graded families the
weak/strong distinction becomes decisive. First, the next example shows that  the converse to the weak asymptotic Rees theorem fails in general.

\begin{ex}
\label{ex:weak-converse}
Let $R=k[[x]]$, $\mathfrak m=(x)$, and let
$\mathcal I=\{I_n\}_{n\geq0}$ be the adic family $I_n=(x^n)$. Then
$e(\mathcal I)=1$. For every $n\geq1$, choose
$x_n=x^{n+1}\in I_n$ and set $Q_n=(x_n)$. Since
$e(Q_n;R)=n+1$,
\[
\lim_{n\to\infty}\frac{e(Q_n;R)}{n}
=1=e(\mathcal I).
\]
Nevertheless $Q_n$ is not a reduction of $I_n$ for any $n$: if
$Q_nI_n^r=I_n^{r+1}$ for some $r\geq0$, then
$(x^{nr+n+1})=(x^{nr+n})$, which is impossible. Hence the numerical
identity in Theorem~\ref{thm:weak-rees} does not characterize weak
asymptotic joint reductions.
\end{ex}


We now prove a converse in a form that extends the full localized
converse of Rees's mixed-multiplicity theorem from ideals to
multigraded admissible families.

\begin{thm}[Localized converse of the strong Rees theorem]
\label{thm:localized-strong-converse}
Let $(R,\mathfrak m)$ be a formally equidimensional Noetherian local
ring, and let
$\boldsymbol{\mathcal I}
=(\mathcal I^{(1)},\ldots,\mathcal I^{(s)})$
be a multigraded admissible collection of graded families of ideals.
Assume that there exists an ideal $\mathfrak a$ such that
$\sqrt{I_n^{(i)}}=\mathfrak a$ for every $i=1,\ldots,s$ and every
$n\geq1$. Set $k=\operatorname{ht}\mathfrak a$, and let
$\mathbf d=(d_1,\ldots,d_s)\in\mathbb N^s$ satisfy $|\mathbf d|=k$. Choose positive integers $a_{ij}$ and elements
$x_{ij}\in I_{a_{ij}}^{(i)}$ for
$1\leq i\leq s$ and $1\leq j\leq d_i$, and set $Q=(x_{ij})$.
Assume that $\sqrt Q=\mathfrak a$ and that, for every
$P\in\operatorname{Min}(Q)$,
\[
e(QR_P;R_P)
=
\left(\prod_{i=1}^{s}\prod_{j=1}^{d_i}a_{ij}\right)
e\!\left(
\mathcal I_P^{(1)[d_1]},\ldots,
\mathcal I_P^{(s)[d_s]};R_P
\right),
\]
where $\mathcal I_P^{(i)}=\{I_n^{(i)}R_P\}_{n\geq0}$.
Then $\{x_{ij}\}$ is a strong homogeneous joint reduction of
$\boldsymbol{\mathcal I}$ of type $\mathbf d$, with homogeneous degrees
$a_{ij}$.
\end{thm}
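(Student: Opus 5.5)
The plan is to reduce the statement to the classical localized converse \cite[Theorem~17.6.1]{SwansonHuneke}, applied to a list of powers of the admissible base ideals, and then to lift the resulting classical joint reduction back to the multigraded Rees algebra. This mirrors the proofs of Theorem~\ref{thm:weighted-strong-rees} and Lemma~\ref{lem:power-stability}.

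First, I normalize the degrees. Let $\mathbf J=(J_1,\ldots,J_s)$ be an admissible base. By Lemma~\ref{lem:change-base} we may replace $J_i$ by $K_i=I_1^{(i)}$, so that $\sqrt{K_i}=\mathfrak a$. The sandwich \eqref{eq:admissible-sandwich} in the proof of Lemma~\ref{lem:admissible-invariance} does not use $\mathfrak m$-primarity, so it gives $\mathbf K^{\mathbf n}\subseteq\mathcal F(\mathbf n)\subseteq\mathbf K^{\mathbf n-\mathbf c}$ for $\mathbf n\gg\mathbf0$. This sandwich is preserved by localization at each $P\in\operatorname{Min}(Q)$. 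There $\mathfrak aR_P$ is $PR_P$-primary, since $P$ is minimal over $\mathfrak a$. The squeeze argument of Lemma~\ref{lem:admissible-invariance}, run in $R_P$, then gives
\[
e\bigl(\mathcal I_P^{(1)[d_1]},\ldots,\mathcal I_P^{(s)[d_s]};R_P\bigr)=e\bigl(K_{1P}^{[d_1]},\ldots,K_{sP}^{[d_s]};R_P\bigr).
\]
Here $\dim R_P=\operatorname{ht}P=k$; this uses formal equidimensionality and $\operatorname{ht}\mathfrak a=k$. By homogeneity of classical mixed multiplicities, the right-hand side times $\prod a_{ij}$ equals the mixed multiplicity $e(K_{ij}R_P;R_P)$ of the $k$-tuple $K_{ij}:=K_i^{a_{ij}}$. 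Since $x_{ij}\in I^{(i)}_{a_{ij}}$, we would like $x_{ij}\in K_i^{a_{ij}}$, but this fails in general. I would therefore use the Veronese trick instead. Choose $A_i=\operatorname{lcm}_j a_{ij}$, set $y_{ij}=x_{ij}^{A_i/a_{ij}}$, and pass to the Veronese collection $\mathcal G$, which is admissible by Lemma~\ref{lem:veronese-admissible}, with base $\mathcal G(\mathbf e_i)=I^{(i)}_{A_i}=:L_i$. Then $y_{ij}\in L_i$, $\sqrt{(y_{ij})}=\sqrt Q$, and \cite[Proposition~11.2.9(2)]{SwansonHuneke} in $R_P$ (the $x_{ij}$ form a system of parameters there) together with \eqref{eq:veronese-scaling} localized converts the hypothesis into
\[
e\bigl((y_{ij})R_P\bigr)=e\bigl(L_{1P}^{[d_1]},\ldots,L_{sP}^{[d_s]};R_P\bigr)\quad\text{for all } P\in\operatorname{Min}(Q).
\]

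Second, I apply \cite[Theorem~17.6.1]{SwansonHuneke} to the list in which $L_i$ occurs $d_i$ times. Its hypotheses are: $R$ is formally equidimensional; the radicals of the ideals and of $(y_{ij})$ all equal $\mathfrak a$, of height $k$ equal to the number of elements; and the localized multiplicities agree. The conclusion is that $\{y_{ij}\}$ is a classical joint reduction of that list. Next I upgrade this to the eventual multigraded identity $\mathbf L^{\mathbf n}=\sum y_{ij}\mathbf L^{\mathbf n-\mathbf e_i}$ for $\mathbf n\gg\mathbf0$, as asserted in the proof of Corollary~\ref{cor:strong-rees-classical} (a standard property of classical joint reductions, via \cite[Section~17.1]{SwansonHuneke}). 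Then I lift it to $\mathcal R(\boldsymbol{\mathcal G})$ exactly as in Lemma~\ref{lem:power-stability}. The ring $\mathcal A=\mathcal R(\boldsymbol{\mathcal G})$ is finite over $\mathcal B=\mathcal R(\mathbf L)$, and the identity gives $\mathcal B_{++}^N\subseteq(Y_{ij})\mathcal B$. Hence $\mathcal A_{\mathbf n}\subseteq(Y_{ij})\mathcal A$ for $\mathbf n\gg\mathbf0$, so $\{y_{ij}\}$ is a degree-one strong reduction of $\boldsymbol{\mathcal G}$.

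Finally, I descend back to the $x_{ij}$ with the original degrees. Strong reduction for $\boldsymbol{\mathcal G}$ only controls multidegrees in $\prod A_i\mathbb N$. For a general $\mathbf n\gg\mathbf0$, I would use that $\mathcal R(\boldsymbol{\mathcal I})$ is finite over $\mathcal R(\mathbf L^{\text{Ver}})=R[L_1t_1^{A_1},\ldots,L_st_s^{A_s}]$, which is finite over the Veronese-base algebra by the residue-class splitting in Lemma~\ref{lem:veronese-admissible}. The previous argument then gives that a power of the irrelevant ideal of this algebra lies in $(Y_{ij})\mathcal R(\boldsymbol{\mathcal I})\subseteq(X_{ij})\mathcal R(\boldsymbol{\mathcal I})$, since $Y_{ij}=X_{ij}^{c_{ij}}$. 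Consequently $\mathcal R(\boldsymbol{\mathcal I})_{\mathbf n}=(X_{ij})_{\mathbf n}$ for $\mathbf n\gg\mathbf0$, and Proposition~\ref{prop:intrinsic} concludes the proof.

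The main obstacle is the first step: identifying the localized graded-family mixed multiplicities with classical ones for non-$\mathfrak m$-primary ideals after localization. This requires checking that admissibility, and Cutkosky's limits, behave well under $R\to R_P$, and that $\dim R_P=k$ for every $P\in\operatorname{Min}(Q)$. A secondary point to verify is the passage from a classical joint reduction to the eventual multigraded identity when $\mathfrak a$ is not $\mathfrak m$-primary.
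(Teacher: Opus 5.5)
Your proposal is correct and follows essentially the same route as the paper. Both pass to $y_{ij}=x_{ij}^{A_i/a_{ij}}$ and the Veronese collection with base $I^{(i)}_{A_i}$, and use \cite[Proposition~11.2.9(2)]{SwansonHuneke}, Veronese scaling and admissible invariance at each $P\in\operatorname{Min}(Q)$ to reach the hypotheses of \cite[Theorem~17.6.1]{SwansonHuneke}; both then lift the classical joint reduction to the Rees algebras and descend to $\mathcal R(\boldsymbol{\mathcal I})$ via $(Y_{ij})\subseteq(X_{ij})$. Your localized sandwich argument spells out the invariance step that the paper only invokes, and note that $\dim R_P=k$ already follows from Krull's height theorem together with $P\supseteq\mathfrak a$, without formal equidimensionality.
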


\begin{proof}
For every $i$ with $d_i>0$, set
$A_i=\operatorname{lcm}(a_{i1},\ldots,a_{id_i})$ and
$c_{ij}=A_i/a_{ij}$; if $d_i=0$, put $A_i=1$. Define
$y_{ij}=x_{ij}^{c_{ij}}$ and $Q'=(y_{ij})$. Since each $y_{ij}$ is a
positive power of $x_{ij}$, we have
$\sqrt{Q'}=\sqrt Q=\mathfrak a$. Fix $P\in\operatorname{Min}(Q)$. Then
$\dim R_P=\operatorname{ht}P=k$, and $QR_P$ is $PR_P$-primary and
generated by $k$ elements. Hence these generators form a system of
parameters of $R_P$. By
\cite[Proposition~11.2.9(2)]{SwansonHuneke},
\[
e(Q'R_P;R_P)
=
\left(\prod_{i,j}c_{ij}\right)e(QR_P;R_P).
\]
Using the assumed localized equality and
$c_{ij}a_{ij}=A_i$, we obtain
\[
e(Q'R_P;R_P)
=
\left(\prod_{i=1}^{s}A_i^{d_i}\right)
e\!\left(
\mathcal I_P^{(1)[d_1]},\ldots,
\mathcal I_P^{(s)[d_s]};R_P
\right).
\]

For each $i$, let
$\mathcal G^{(i)}=\{I_{A_i n}^{(i)}\}_{n\geq0}$.
Localization commutes with passage to these coordinatewise Veronese
families. The Veronese scaling formula for mixed multiplicities gives
\[
e\!\left(
\mathcal G_P^{(1)[d_1]},\ldots,
\mathcal G_P^{(s)[d_s]};R_P
\right)
=
\left(\prod_{i=1}^{s}A_i^{d_i}\right)
e\!\left(
\mathcal I_P^{(1)[d_1]},\ldots,
\mathcal I_P^{(s)[d_s]};R_P
\right).
\]
Consequently,
\begin{equation}
\label{eq:localized-converse-G}
e(Q'R_P;R_P)
=
e\!\left(
\mathcal G_P^{(1)[d_1]},\ldots,
\mathcal G_P^{(s)[d_s]};R_P
\right).
\end{equation}

Set $K_i=I_{A_i}^{(i)}$. We claim that
$\boldsymbol{\mathcal G}$ is admissible with respect to
$\mathbf K=(K_1,\ldots,K_s)$. Indeed, if the original collection is
admissible with respect to $\mathbf J=(J_1,\ldots,J_s)$, then its
coordinatewise Veronese Rees algebra is finite over the corresponding
Veronese of $\mathcal R(\mathbf J)$. After regrading, the latter is
$\mathcal R(J_1^{A_1},\ldots,J_s^{A_s})$. Since
$J_i^{A_i}\subseteq K_i$ and
$K_i^n\subseteq I_{A_i n}^{(i)}$, one has
\[
\mathcal R(J_1^{A_1},\ldots,J_s^{A_s})
\subseteq
\mathcal R(\mathbf K)
\subseteq
\mathcal R(\boldsymbol{\mathcal G}).
\]
Thus the same finite set of module generators over the first algebra
also generates $\mathcal R(\boldsymbol{\mathcal G})$ over
$\mathcal R(\mathbf K)$. Recall that localization preserves this finiteness. Moreover,
$\sqrt{K_iR_P}=PR_P$, so every $K_iR_P$ is $PR_P$-primary.
By the invariance of mixed multiplicities for admissible collections,
\eqref{eq:localized-converse-G} becomes
\begin{equation}
\label{eq:localized-converse-classical}
e(Q'R_P;R_P)
=
e\!\left(
(K_1R_P)^{[d_1]},\ldots,
(K_sR_P)^{[d_s]};R_P
\right)
\end{equation}
for every $P\in\operatorname{Min}(Q')$. Since $y_{ij}\in K_i$, while $Q'$ and all the $K_i$ have the same
radical $\mathfrak a$ and the same height $k$, we may apply
\cite[Theorem~17.6.1]{SwansonHuneke} to the list in which $K_i$ occurs
$d_i$ times. The localized equalities required there are precisely
\eqref{eq:localized-converse-classical}. Hence the $y_{ij}$ form a
classical joint reduction of
\[
\underbrace{K_1,\ldots,K_1}_{d_1},
\ldots,
\underbrace{K_s,\ldots,K_s}_{d_s}.
\]

Let $S=\{i\mid d_i>0\}$ and set $P_0=\prod_{i\in S}K_i^{d_i},$ and  $H=
\sum_{i\in S}\sum_{j=1}^{d_i}
y_{ij}K_i^{d_i-1}
\prod_{\substack{h\in S\\h\neq i}}K_h^{d_h}.$
The classical joint-reduction condition says that $H$ is a reduction
of $P_0$. Hence $HP_0^r=P_0^{r+1}$ for some $r\geq0$. Expanding this
identity and multiplying by arbitrary additional powers of the active
$K_i$, as well as arbitrary powers of the inactive $K_h$, gives
\begin{equation}
\label{eq:B-tail-localized-converse}
K_1^{n_1}\cdots K_s^{n_s}
=
\sum_{i=1}^{s}\sum_{j=1}^{d_i}
y_{ij}
K_1^{n_1}\cdots K_i^{n_i-1}\cdots K_s^{n_s}
\end{equation}
for every $\mathbf n\gg\mathbf0$. Let $\mathcal B=R[K_1t_1,\ldots,K_st_s]$ and
$\mathcal C=\mathcal R(\boldsymbol{\mathcal G})$.
Since $\boldsymbol{\mathcal G}$ is $\mathbf K$-admissible,
$\mathcal C$ is a finite graded $\mathcal B$-module. Choose homogeneous
generators $u_1,\ldots,u_v$ with degrees
$\mathbf b_1,\ldots,\mathbf b_v$. For $\mathbf n\gg\mathbf0$, $\mathcal C_{\mathbf n}
=
\sum_{\ell=1}^{v}
\mathcal B_{\mathbf n-\mathbf b_\ell}u_\ell.$
Taking $\mathbf n$ sufficiently large that every
$\mathbf n-\mathbf b_\ell$ lies in the range of
\eqref{eq:B-tail-localized-converse}, and translating that identity into
homogeneous components, yields $\mathcal C_{\mathbf n}
=
\sum_{i,j}(y_{ij}t_i)\mathcal C_{\mathbf n-\mathbf e_i}.$
Thus $\{y_{ij}\}$ is a degree-one strong homogeneous joint reduction of
the Veronese collection $\boldsymbol{\mathcal G}$.

It remains to return to the original grading. Let
$\mathcal A=\mathcal R(\boldsymbol{\mathcal I})$ and let
$\mathcal D\subseteq\mathcal A$ be its coordinatewise
$\mathbf A$-Veronese subalgebra, so that after regrading
$\mathcal D=\mathcal R(\boldsymbol{\mathcal G})$. The algebra
$\mathcal A$ is finite over $\mathcal D$. Indeed, if
$\mathcal B_0=\mathcal R(\mathbf J)$ is an admissible standard
multigraded base, then $\mathcal A$ is finite over $\mathcal B_0$, while
$\mathcal B_0$ is finite over its coordinatewise
$\mathbf A$-Veronese subalgebra. Hence $\mathcal A$ is finite over
$\mathcal B_0^{(\mathbf A)}$, and since
$\mathcal B_0^{(\mathbf A)}\subseteq\mathcal D\subseteq\mathcal A$,
the same finite set of generators makes $\mathcal A$ a finite
$\mathcal D$-module. Choose homogeneous $\mathcal D$-module generators
$v_1,\ldots,v_q$. In the original grading put
$Y_{ij}=y_{ij}t_i^{A_i}$. The strong identity in $\mathcal D$, together
with the finite $\mathcal D$-module generation of $\mathcal A$, gives $\mathcal A_{\boldsymbol\nu}
=
\sum_{i,j}
Y_{ij}\mathcal A_{\boldsymbol\nu-A_i\mathbf e_i}$  for $\boldsymbol\nu\gg\mathbf0$.
Then $\{y_{ij}\}$ is a strong homogeneous joint reduction of the
original collection, with degrees $A_i$.

Now, put $X_{ij}=x_{ij}t_i^{a_{ij}}$. Since
$Y_{ij}=X_{ij}^{c_{ij}}$, one has
$(Y_{ij})\mathcal A\subseteq(X_{ij})\mathcal A$. Hence the vanishing of
the sufficiently large multigraded components of
$\mathcal A/(Y_{ij})\mathcal A$ implies the corresponding vanishing for
$\mathcal A/(X_{ij})\mathcal A$. By
Proposition~\ref{prop:intrinsic}, the original elements $x_{ij}$ form a
strong homogeneous joint reduction of type $\mathbf d$ and degrees
$a_{ij}$.
\end{proof}

The maximal-height $\mathfrak m$-primary converse used earlier in the
strong theory is now an immediate specialization of the localized
theorem.

\begin{cor}
\label{cor:strong-converse}
\label{thm:strong-converse}
Let $(R,\mathfrak m)$ be a formally equidimensional Noetherian local ring
of dimension $d\geq1$, and let
$\boldsymbol{\mathcal I}
=(\mathcal I^{(1)},\ldots,\mathcal I^{(s)})$
be a multigraded admissible collection of graded
$\mathfrak m$-primary families. Let
$\mathbf d=(d_1,\ldots,d_s)\in\mathbb N^s$ satisfy $|\mathbf d|=d$.
Choose positive integers $a_{ij}$ and elements
$x_{ij}\in I_{a_{ij}}^{(i)}$, and set $Q=(x_{ij})$. Assume that $Q$ is
$\mathfrak m$-primary and
\[
e(Q;R)
=
\left(\prod_{i=1}^{s}\prod_{j=1}^{d_i}a_{ij}\right)
e\!\left(
\mathcal I^{(1)[d_1]},\ldots,
\mathcal I^{(s)[d_s]}
\right).
\]
Then $\{x_{ij}\}$ is a strong homogeneous joint reduction of type
$\mathbf d$ with homogeneous degrees $a_{ij}$.
\end{cor}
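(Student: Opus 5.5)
The plan is to deduce the corollary directly from Theorem~\ref{thm:localized-strong-converse}, taking $\mathfrak a=\mathfrak m$.

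\textbf{Checking the radical hypothesis.} Every $I_n^{(i)}$ with $n\geq1$ is $\mathfrak m$-primary, so $\sqrt{I_n^{(i)}}=\mathfrak m$ for all $i$ and all $n\geq1$. Then
\[
k=\operatorname{ht}\mathfrak m=\dim R=d,
\]
so the condition $|\mathbf d|=d$ is exactly the condition $|\mathbf d|=k$ required there. Since $Q$ is assumed $\mathfrak m$-primary, we also have $\sqrt Q=\mathfrak m=\mathfrak a$. Admissibility and formal equidimensionality of $R$ are assumed in the corollary, exactly as in the theorem.

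\textbf{Checking the localized equalities.} The only remaining hypothesis concerns primes $P\in\operatorname{Min}(Q)$. Because $Q$ is $\mathfrak m$-primary, $\operatorname{Min}(Q)=\{\mathfrak m\}$. Localizing at $\mathfrak m$ changes nothing:
\begin{itemize}
\item $R_{\mathfrak m}=R$;
\item $QR_{\mathfrak m}=Q$;
\item $\mathcal I^{(i)}_{\mathfrak m}=\mathcal I^{(i)}$.
\end{itemize}
So the single required equality is precisely the hypothesis
\[
e(Q;R)=\Bigl(\prod_{i,j}a_{ij}\Bigr)\,e\bigl(\mathcal I^{(1)[d_1]},\ldots,\mathcal I^{(s)[d_s]}\bigr).
\]
Theorem~\ref{thm:localized-strong-converse} then says that $\{x_{ij}\}$ is a strong homogeneous joint reduction of type $\mathbf d$ with homogeneous degrees $a_{ij}$.

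\textbf{The one point needing care.} The mixed multiplicity notation in the theorem, $e(\cdots;R_P)$, has to be read at $P=\mathfrak m$ as Cutkosky's mixed multiplicity over $R$ itself. This holds because localizing at the maximal ideal is the identity operation. So the corollary is a purely formal specialization, and no step presents a real obstacle.
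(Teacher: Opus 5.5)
Your proposal is correct and is exactly the paper's argument: specialize Theorem~\ref{thm:localized-strong-converse} to $\mathfrak a=\mathfrak m$. There $\operatorname{ht}\mathfrak m=d$, $\sqrt Q=\mathfrak m$ and $\operatorname{Min}(Q)=\{\mathfrak m\}$, and localizing at $\mathfrak m$ changes nothing, so the localized hypothesis reduces to the single given equality at $P=\mathfrak m$.
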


\begin{proof}
Apply Theorem~\ref{thm:localized-strong-converse} with
$\mathfrak a=\mathfrak m$. Since every $I_n^{(i)}$ and $Q$ are
$\mathfrak m$-primary, their radicals are $\mathfrak m$, one has
$\operatorname{ht}\mathfrak m=d$, and
$\operatorname{Min}(Q)=\{\mathfrak m\}$. Thus the localized condition in
Theorem~\ref{thm:localized-strong-converse} reduces to the single
equality at $P=\mathfrak m$, which is precisely the hypothesis above.
\end{proof}

Combining the preceding converse with
Theorem~\ref{thm:weighted-strong-rees} gives the numerical
characterization in the original $\mathfrak m$-primary setting.

\begin{cor}
\label{cor:numerical-characterization}
Let $(R,\mathfrak m)$ be a formally equidimensional Noetherian local ring
of dimension $d\geq1$, and let
$\boldsymbol{\mathcal I}
=(\mathcal I^{(1)},\ldots,\mathcal I^{(s)})$
be a multigraded admissible collection of graded
$\mathfrak m$-primary families. Let
$\mathbf d=(d_1,\ldots,d_s)\in\mathbb N^s$ satisfy $|\mathbf d|=d$.
Choose positive integers $a_{ij}$ and elements
$x_{ij}\in I_{a_{ij}}^{(i)}$, and assume that
$Q=(x_{ij})$ is $\mathfrak m$-primary. Then the following conditions are
equivalent:
\begin{enumerate}
\item $\{x_{ij}\}$ is a strong homogeneous joint reduction of type
$\mathbf d$ with homogeneous degrees $a_{ij}$;
\item
\[
e(Q;R)
=
\left(\prod_{i=1}^{s}\prod_{j=1}^{d_i}a_{ij}\right)
e\!\left(
\mathcal I^{(1)[d_1]},\ldots,
\mathcal I^{(s)[d_s]}
\right).
\]
\end{enumerate}
\end{cor}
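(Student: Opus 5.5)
The equivalence in Corollary~\ref{cor:numerical-characterization} should follow by combining two results already established, one for each direction. No new argument is needed beyond checking that the hypotheses match.

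For (1)$\Rightarrow$(2), assume $\{x_{ij}\}$ is a strong homogeneous joint reduction of type $\mathbf d$ with homogeneous degrees $a_{ij}$. The ring $R$ is Noetherian local of dimension $d\geq1$, the collection is multigraded admissible, and $|\mathbf d|=d$. These are exactly the hypotheses of Theorem~\ref{thm:weighted-strong-rees}. That theorem gives both that $Q$ is $\mathfrak m$-primary and the displayed weighted formula, which is (2). Formal equidimensionality is not used in this direction. The standing assumption that $Q$ is $\mathfrak m$-primary is consistent with the conclusion of the theorem and is not needed here.

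For (2)$\Rightarrow$(1), assume the numerical equality. Together with the standing hypotheses (formal equidimensionality, admissibility, $\mathfrak m$-primary families, $|\mathbf d|=d$, $Q$ $\mathfrak m$-primary, $x_{ij}\in I_{a_{ij}}^{(i)}$), this is precisely the setting of Corollary~\ref{cor:strong-converse}. That corollary concludes that $\{x_{ij}\}$ is a strong homogeneous joint reduction of type $\mathbf d$ with degrees $a_{ij}$.

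The only step needing care is confirming that the global mixed multiplicity in (2) is the one to which Corollary~\ref{cor:strong-converse} applies. That corollary reduces Theorem~\ref{thm:localized-strong-converse} to the single prime $P=\mathfrak m$, where $\mathcal I_{\mathfrak m}^{(i)}=\mathcal I^{(i)}$ because $R_{\mathfrak m}=R$. The localized equality at $P=\mathfrak m$ is therefore literally the equality in (2), and no further obstacle remains.
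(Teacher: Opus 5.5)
Your proposal is correct and is exactly the paper's proof: (1)$\Rightarrow$(2) is Theorem~\ref{thm:weighted-strong-rees}, and (2)$\Rightarrow$(1) is Corollary~\ref{cor:strong-converse}. You also correctly check that the hypotheses match, including that the localized condition of Theorem~\ref{thm:localized-strong-converse} collapses to the single equality at $P=\mathfrak m$.
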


\begin{proof}
The implication $(1)\Rightarrow(2)$ is
Theorem~\ref{thm:weighted-strong-rees}, while
$(2)\Rightarrow(1)$ is
Corollary~\ref{cor:strong-converse}.
\end{proof}

The next corollary shows that Theorem~\ref{thm:localized-strong-converse}
recovers the full localized converse of Swanson--Huneke
\cite[Theorem~17.6.1]{SwansonHuneke}.

\begin{cor}
\label{cor:swanson-full-specialization}
Let $(R,\mathfrak m)$ be a formally equidimensional Noetherian local
ring, let $I_1,\ldots,I_k$ be ideals of $R$, and choose
$x_i\in I_i$ for $i=1,\ldots,k$. Assume that
$(x_1,\ldots,x_k)$ and the ideals $I_i$ have the same radical and the
same height $k$. If
\[
e((x_1,\ldots,x_k)R_P;R_P)
=
e(I_1R_P,\ldots,I_kR_P;R_P)
\]
for every $P\in\operatorname{Min}(x_1,\ldots,x_k)$, then
$(x_1,\ldots,x_k)$ is a joint reduction of
$(I_1,\ldots,I_k)$.
\end{cor}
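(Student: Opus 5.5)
The plan is to deduce the corollary from Theorem~\ref{thm:localized-strong-converse} by specializing to the adic collection $\mathcal I^{(i)}=\{I_i^n\}_{n\geq0}$, $i=1,\ldots,k$. We take $s=k$, $\mathbf d=(1,\ldots,1)$, and all $a_{i1}=1$, with $x_{i1}=x_i$. The hypotheses of that theorem must be checked in order.

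First, the common-radical condition. Put $\mathfrak a=\sqrt{(x_1,\ldots,x_k)}$, which by assumption equals $\sqrt{I_i}$ for every $i$. Then $\sqrt{I_i^n}=\sqrt{I_i}=\mathfrak a$ for all $n\geq1$, $\operatorname{ht}\mathfrak a=k$, and $|\mathbf d|=k$.

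Second, multigraded admissibility of the adic collection. We take $\mathbf J=(I_1,\ldots,I_k)$ and $\mathcal F(\mathbf n)=\mathbf I^{\mathbf n^+}$. Then $\mathcal R'(\mathcal F)=\mathcal R'(\mathbf J)$, so finiteness is trivial and conditions (1)--(4) of Definition~\ref{def:admissible} are immediate. The definition was phrased for $\mathfrak m$-primary ideals, but the theorem explicitly allows families that are not $\mathfrak m$-primary, so this is the sense in which it is used.

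Third, the localized equality. For $P\in\operatorname{Min}(Q)$, the ideals $I_iR_P$ are $PR_P$-primary, and the family $\mathcal I_P^{(i)}$ is the $I_iR_P$-adic family. Corollary~\ref{cor:weighted-strong-rees-adic}, last assertion, applied in $R_P$ (or directly \cite[Equation~(9)]{Cutkosky2026} together with homogeneity) gives
\[
e(\mathcal I_P^{(1)},\ldots,\mathcal I_P^{(k)};R_P)=e(I_1R_P,\ldots,I_kR_P;R_P).
\]
Since the product of the $a_{ij}$ equals $1$, the hypothesis of the corollary is exactly the localized hypothesis of the theorem. Theorem~\ref{thm:localized-strong-converse} then yields that $\{x_i\}$ is a degree-one strong homogeneous joint reduction, that is,
\[
I_1^{n_1}\cdots I_k^{n_k}=\sum_i x_iI_1^{n_1}\cdots I_i^{n_i-1}\cdots I_k^{n_k}\qquad(\mathbf n\gg\mathbf 0).
\]

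Finally, we extract the classical joint reduction. Taking $\mathbf n=(r+1)\mathbf e$ with $r\gg0$ and setting $P_0=I_1\cdots I_k$ and $H=\sum_i x_iI_1\cdots\widehat{I_i}\cdots I_k$, we get $P_0^{r+1}=HP_0^r$. This is the same manipulation as in the proof of Corollary~\ref{cor:weighted-strong-rees-adic}. Hence $H$ is a reduction of $P_0$, which is the definition of a joint reduction.

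The main obstacle is logical rather than computational. The proof of Theorem~\ref{thm:localized-strong-converse} itself invokes \cite[Theorem~17.6.1]{SwansonHuneke}, so this specialization is a consistency check and not an independent proof. A careful write-up should note this, and should verify that the admissibility notion and Cutkosky's localized mixed multiplicities make sense for ideals that are $PR_P$-primary after localization.
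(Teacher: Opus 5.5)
Your proposal is correct and follows the paper's proof essentially step for step. You specialize Theorem~\ref{thm:localized-strong-converse} to the adic families $\{I_i^n\}_{n\geq0}$ with $s=k$, $d_i=1$, $a_{i1}=1$, identify their localized mixed multiplicities with the classical ones, and evaluate the resulting degree-one strong identity on the diagonal to get $P_0^{r+1}=HP_0^r$. Your additional checks and remarks are accurate and make explicit what the paper leaves implicit: that $\mathcal R'(\mathcal F)=\mathcal R'(\mathbf J)$, that each $I_iR_P$ is $PR_P$-primary, and that the theorem's own proof already invokes \cite[Theorem~17.6.1]{SwansonHuneke}, so this is a consistency check rather than an independent argument.
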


\begin{proof}
For each $i$, take the adic family
$\mathcal I^{(i)}=\{I_i^n\}_{n\geq0}$, and set
$s=k$, $d_i=1$, and $a_{i1}=1$. These adic families form a multigraded
admissible collection, and their localized mixed multiplicities are the
classical mixed multiplicities. Thus the hypotheses of
Theorem~\ref{thm:localized-strong-converse} are precisely the hypotheses
above. Hence $\{x_1,\ldots,x_k\}$ is a degree-one strong homogeneous
joint reduction. Therefore, for every $\mathbf n\gg\mathbf0$, $I_1^{n_1}\cdots I_k^{n_k}
=
\sum_{i=1}^{k}
x_iI_1^{n_1}\cdots I_i^{n_i-1}\cdots I_k^{n_k}.$
Taking $n_1=\cdots=n_k=n\gg0$ and setting
$L=I_1\cdots I_k$ and
$H=\sum_i x_i\prod_{j\neq i}I_j$, we obtain
$L^n=HL^{n-1}$. Hence $H$ is a reduction of $L$, which is exactly the
statement that $(x_1,\ldots,x_k)$ is a classical joint reduction of
$(I_1,\ldots,I_k)$.
\end{proof}

\begin{defn}
Let $\mathcal I=\{I_n\}_{n\geq0}$ and
$\mathcal J=\{J_n\}_{n\geq0}$ be graded families such that
$I_n\subseteq J_n$ for every $n\geq0$. We say that
$\mathcal I$ is a \emph{reduction} of $\mathcal J$ if
$\mathcal R(\mathcal J)$ is a finite
$\mathcal R(\mathcal I)$-module.
\end{defn}

\begin{defn}
Let $\mathcal I=\{I_n\}_{n\geq0}$ be a Noetherian graded family. Its
\emph{analytic spread} is defined by
\[
\ell(\mathcal I)
:=
\dim\!\left(
\frac{\mathcal R(\mathcal I)}
{\mathfrak m\mathcal R(\mathcal I)}
\right).
\]
\end{defn}

\begin{lem}
\label{lem:equimultiple-homogeneous-reduction}
Let $(R,\mathfrak m)$ be a Noetherian local ring and let
$\mathcal I=\{I_n\}_{n\geq0}$ be a Noetherian graded family.
Set $\mathcal A=\mathcal R(\mathcal I)$ and $k=\ell(\mathcal I).$
Then there exist positive integers $a_1,\ldots,a_k$ and elements
$x_j\in I_{a_j}$ such that $I_n
=
\sum_{j=1}^{k}x_jI_{n-a_j}$ for  $n\gg0.$
Thus $\{x_1,\ldots,x_k\}$ is a strong homogeneous reduction of
$\mathcal I$.
\end{lem}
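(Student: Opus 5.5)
The plan is to work in the special fiber $\mathcal A/\mathfrak m\mathcal A$ and use graded Noether normalization there, and then lift back with Nakayama's lemma in each degree separately. Working in the fiber avoids any infinite-residue-field hypothesis. This is why the elements may have varying degrees $a_j$ rather than all being of degree one.

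Set $F=\mathcal A/\mathfrak m\mathcal A=\bigoplus_{n\geq0}I_n/\mathfrak m I_n$. Since $\mathcal I$ is Noetherian, $\mathcal A$ is a finitely generated $\N$-graded $R$-algebra with $\mathcal A_0=R$. Hence $F$ is a finitely generated $\N$-graded algebra over the field $F_0=R/\mathfrak m$, and $\dim F=\ell(\mathcal I)=k$.

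First I will choose a homogeneous system of parameters $\overline X_1,\ldots,\overline X_k$ of $F$ of positive degrees $a_1,\ldots,a_k$. Such a system exists by the standard graded version of Noether normalization (e.g.\ Bruns--Herzog, Prop.~1.5.17). The argument is by induction on $\dim F$.
\begin{itemize}
\item If $\dim F>0$, the irrelevant ideal $F_+$ is not contained in any minimal prime $P$ with $\dim F/P=\dim F$. Otherwise $F/P$ would be a quotient of $F/F_+=F_0$, a field, and would have dimension $0$.
\item Homogeneous prime avoidance then gives a homogeneous element of positive degree outside all these primes. If necessary one first passes to suitable powers of the generators so that they all have a common degree, which makes the avoidance argument work.
\item Modding out by this element lowers the dimension by one, and the induction continues.
\end{itemize}
If $k=0$ there is nothing to choose, since $F$ already has finite length. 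I expect this existence step to be the only place needing genuine care, mainly because the residue field may be finite and the degrees have to be allowed to vary. Since this is a classical fact, I will cite it rather than reprove it.

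Next, I lift each $\overline X_j$ to a homogeneous element $X_j=x_jt^{a_j}\in\mathcal A_{a_j}$, so that $x_j\in I_{a_j}$. Because $F/(\overline X_1,\ldots,\overline X_k)F$ is a finitely generated graded module of Krull dimension $0$, it has finite length. Therefore its homogeneous components vanish for $n\gg0$. In terms of $\mathcal A$, for all $n\gg0$ this says
\[
\mathcal A_n=\bigl((X_1,\ldots,X_k)\mathcal A\bigr)_n+\mathfrak m\mathcal A_n,
\qquad\text{that is,}\qquad
I_n=\sum_{j=1}^k x_jI_{n-a_j}+\mathfrak m I_n .
\]

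Finally, each $I_n$ is a finitely generated $R$-module, and $\sum_j x_jI_{n-a_j}$ is a submodule of it, where we use $x_jI_{n-a_j}\subseteq I_n$ by multiplicativity of the family. Nakayama's lemma in the local ring $(R,\mathfrak m)$ then yields
\[
I_n=\sum_{j=1}^k x_jI_{n-a_j}\qquad\text{for all } n\gg0 .
\]
This is precisely the identity of Definition~\ref{def:strong} with $s=1$ and $r_1=k$, with homogeneous degrees $a_j$. So $\{x_1,\ldots,x_k\}$ is a strong homogeneous reduction of $\mathcal I$, as claimed.
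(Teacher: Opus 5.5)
Your proposal is correct and follows the paper's proof. You pass to the fiber algebra $\mathcal A/\mathfrak m\mathcal A$, take homogeneous elements of positive degrees $a_1,\ldots,a_k$ from graded Noether normalization, lift them to $x_j\in I_{a_j}$, and finish degreewise with Nakayama's lemma; your homogeneous system of parameters is just the equivalent formulation of the normalization, and your finite-length argument for $F/(\overline X_1,\ldots,\overline X_k)F$ spells out the step the paper compresses into ``it follows that''.
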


\begin{proof}
Let
$\mathcal F(\mathcal I)
=\mathcal A/\mathfrak m\mathcal A$
be the fiber algebra. Since $\mathcal A$ is Noetherian,
$\mathcal F(\mathcal I)$ is a finitely generated graded algebra over
$R/\mathfrak m$, of dimension $k$. By homogeneous Noether
normalization, there exist homogeneous elements
$\overline X_1,\ldots,\overline X_k$ of positive degrees
$a_1,\ldots,a_k$ such that
$\mathcal F(\mathcal I)$ is finite over
$(R/\mathfrak m)[\overline X_1,\ldots,\overline X_k]$. Choose homogeneous lifts
$X_j=x_jt^{a_j}\in\mathcal A$.
It follows that, for all sufficiently large $n$, $I_n
=
\sum_{j=1}^{k}x_jI_{n-a_j}
+
\mathfrak m I_n.$
Since $I_n$ is a finite $R$-module, Nakayama's lemma gives $I_n
=
\sum_{j=1}^{k}x_jI_{n-a_j},$
as desired.
\end{proof}

Now, we are able to deduce a B\"oger-type reduction criterion for graded families,
extending \cite{Boger1969} to admissible families and localized mixed
multiplicities.
\begin{thm}
\label{thm:boger-graded-families}
Let $(R,\mathfrak m)$ be a formally equidimensional Noetherian local
ring, and let
$\mathcal I=\{I_n\}_{n\geq0}\subseteq
 \mathcal J=\{J_n\}_{n\geq0}$
be admissible graded families. Assume that there exists an ideal
$\mathfrak a$ such that $\sqrt{I_n}=\sqrt{J_n}=\mathfrak a$ for
$n\geq1,$
and set $k=\operatorname{ht}\mathfrak a$. Suppose that $\ell(\mathcal I)=k.$ If $e\!\left(\mathcal I_P^{[k]};R_P\right)
=
e\!\left(\mathcal J_P^{[k]};R_P\right)$
for every $P\in\operatorname{Min}(\mathfrak a)$, then
$\mathcal I$ is a reduction of $\mathcal J$.
\end{thm}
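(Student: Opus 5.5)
The plan is to reduce to the localized converse, Theorem~\ref{thm:localized-strong-converse}, applied with $s=1$ to the family $\mathcal J$ and type $\mathbf d=(k)$.

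First, I will produce candidate elements inside $\mathcal I$. The family $\mathcal I$ is admissible, so $\mathcal R(\mathcal I)$ is finite over a standard Rees algebra and is therefore Noetherian. Lemma~\ref{lem:equimultiple-homogeneous-reduction} then gives positive integers $a_1,\ldots,a_k$ and elements $x_j\in I_{a_j}$ with $I_n=\sum_j x_jI_{n-a_j}$ for $n\gg0$. Set $Q=(x_1,\ldots,x_k)$.

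I then have to check the hypotheses of Theorem~\ref{thm:localized-strong-converse} for $\mathcal J$, using these same $x_j\in I_{a_j}\subseteq J_{a_j}$.

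Radical condition. Take $n\gg0$ divisible by $\operatorname{lcm}(a_j)$. Iterating the reduction identity puts $I_n^N$, and indeed every $I_{n'}$ with $n'$ large, inside $Q$. Hence $\mathfrak a=\sqrt{I_n}\subseteq\sqrt Q$. Conversely $Q\subseteq\sum_j I_{a_j}\subseteq\mathfrak a$, so $\sqrt Q=\mathfrak a$. In particular $\operatorname{Min}(Q)=\operatorname{Min}(\mathfrak a)$. Every such $P$ has height $k$: formal equidimensionality makes all minimal primes of $\mathfrak a$ of height $\operatorname{ht}\mathfrak a$ by Ratliff's results, since $\mathfrak a$ is the radical of an ideal generated by $k=\operatorname{ht}\mathfrak a$ elements.

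Multiplicity condition. Here I apply Theorem~\ref{thm:weighted-strong-rees} to the localized family $\mathcal I_P$ on $R_P$, which has dimension $k$. Localization preserves the strong identity and admissibility, and $I_nR_P$ is $PR_P$-primary. The theorem gives
\[
e(QR_P;R_P)=\Bigl(\prod_j a_j\Bigr)e(\mathcal I_P^{[k]};R_P)=\Bigl(\prod_j a_j\Bigr)e(\mathcal J_P^{[k]};R_P),
\]
the second equality being the hypothesis of the statement.

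Theorem~\ref{thm:localized-strong-converse} now says that $\{x_j\}$ is a strong homogeneous joint reduction of $\mathcal J$, that is, $J_n=\sum_j x_jJ_{n-a_j}$ for $n\ge n_0$.

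Deducing finiteness. Put $X_j=x_jt^{a_j}\in\mathcal R(\mathcal I)$. I will show that $\mathcal R(\mathcal J)$ is generated as an $\mathcal R(\mathcal I)$-module by $\bigoplus_{n<n_0+\max a_j}J_nt^n$. These pieces are finitely generated $R$-modules, so this finitely many generators suffice. The argument is induction on degree: for $n\ge n_0$, every element of $J_nt^n$ lies in $\sum_j X_j\,J_{n-a_j}t^{n-a_j}$, which drops the degree of the $\mathcal J$-factor. Thus $\mathcal R(\mathcal J)$ is finite over $\mathbb Z$-span of the $X_j$, and hence finite over $\mathcal R(\mathcal I)$.

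The main obstacle is the height and dimension bookkeeping. Theorem~\ref{thm:localized-strong-converse} needs every $P\in\operatorname{Min}(Q)$ to have height exactly $k$, and it needs the localized families to be admissible and primary to $PR_P$. If Ratliff's equidimensionality argument does not give the height statement cleanly, I would instead appeal directly to the hypothesis already built into the converse theorem, namely that $Q$ and the families share radical $\mathfrak a$ of height $k$. A second, smaller point is that "admissible" for a single family should be read with $s=1$ in Definition~\ref{def:admissible}.
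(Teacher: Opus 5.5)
Your proposal is correct and follows the paper's proof step for step. Both arguments take the homogeneous reduction $x_j\in I_{a_j}$ from Lemma~\ref{lem:equimultiple-homogeneous-reduction}, make the same radical computation $\sqrt Q=\mathfrak a$, and apply Theorem~\ref{thm:weighted-strong-rees} on $R_P$ to turn the hypothesis into the localized equality needed for $\mathcal J$. Both then invoke Theorem~\ref{thm:localized-strong-converse} and conclude finiteness from $R[x_1t^{a_1},\ldots,x_kt^{a_k}]\subseteq\mathcal R(\mathcal I)\subseteq\mathcal R(\mathcal J)$.

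Two small remarks. The height claim needs no Ratliff argument: a prime minimal over the $k$-generated ideal $Q$ has height at most $k$ by Krull's height theorem, and at least $\operatorname{ht}\mathfrak a=k$ because it contains $\mathfrak a$. In the finiteness step, the ring you want is the $R$-algebra $R[X_1,\ldots,X_k]$, not the $\mathbb Z$-span of the $X_j$.
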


\begin{proof}
By Lemma~\ref{lem:equimultiple-homogeneous-reduction}, there exist
positive integers $a_1,\ldots,a_k$ and elements
$x_j\in I_{a_j}$ such that $I_n=\sum_{j=1}^{k}x_jI_{n-a_j}$ for $n\gg0$.
Set $Q=(x_1,\ldots,x_k)$. Since $x_j\in I_{a_j}\subseteq\mathfrak a$, we have
$\sqrt Q\subseteq\mathfrak a$. On the other hand, the strong
reduction identity gives $I_n\subseteq Q$ for all sufficiently large
$n$. Hence $\mathfrak a=\sqrt{I_n}\subseteq\sqrt Q,$
and therefore $\sqrt Q=\mathfrak a$.

Let $P\in\operatorname{Min}(\mathfrak a)$. Since $Q$ is generated by
$k=\operatorname{ht}\mathfrak a$ elements and $\sqrt Q=\mathfrak a$,
we have $\operatorname{ht}P=k$. Thus $\dim R_P=k$. Localizing the strong homogeneous reduction of $\mathcal I$ at $P$
and applying Theorem~\ref{thm:weighted-strong-rees}, we obtain
\[
e(QR_P;R_P)
=
\left(\prod_{j=1}^{k}a_j\right)
e\!\left(\mathcal I_P^{[k]};R_P\right).
\]
By hypothesis,
\[
e(QR_P;R_P)
=
\left(\prod_{j=1}^{k}a_j\right)
e\!\left(\mathcal J_P^{[k]};R_P\right)
\]
for every $P\in\operatorname{Min}(\mathfrak a)$.
Since $x_j\in I_{a_j}\subseteq J_{a_j}$, all the hypotheses of
Theorem~\ref{thm:localized-strong-converse} are satisfied for
$\mathcal J$. Hence $\{x_1,\ldots,x_k\}$ is a strong homogeneous
reduction of $\mathcal J$. Therefore
$\mathcal R(\mathcal J)$ is finite over $R[x_1t^{a_1},\ldots,x_kt^{a_k}].$
But $R[x_1t^{a_1},\ldots,x_kt^{a_k}]
\subseteq
\mathcal R(\mathcal I)
\subseteq
\mathcal R(\mathcal J).$
Consequently $\mathcal R(\mathcal J)$ is finite over
$\mathcal R(\mathcal I)$, and hence $\mathcal I$ is a reduction of
$\mathcal J$.
\end{proof}

The previous results allow us to deduce  the classical theorem of B\"oger
\cite{Boger1969}. 
\begin{cor}
\label{cor:boger-specialization}
Let $I\subseteq J\subseteq\sqrt I$ be ideals in a formally
equidimensional local ring, and assume that
$\ell(I)=\operatorname{ht}I$. If $e(IR_P;R_P)=e(JR_P;R_P)$
for every $P\in\operatorname{Min}(I)$, then $I$ is a reduction of
$J$.
\end{cor}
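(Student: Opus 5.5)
The plan is to specialize Theorem~\ref{thm:boger-graded-families} to the adic families $\mathcal I=\{I^n\}_{n\geq0}$ and $\mathcal J=\{J^n\}_{n\geq0}$, and to check that each hypothesis there becomes one of the hypotheses of the corollary.

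\emph{Inclusions and radicals.} From $I\subseteq J$ we get $I^n\subseteq J^n$ for every $n$. Put $\mathfrak a=\sqrt I$. Since $I\subseteq J\subseteq\sqrt I$, we have $\sqrt J=\sqrt I$, and hence $\sqrt{I^n}=\sqrt{J^n}=\mathfrak a$ for all $n\geq1$. Moreover $\operatorname{ht}\mathfrak a=\operatorname{ht}I=:k$ and $\operatorname{Min}(\mathfrak a)=\operatorname{Min}(I)$.

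\emph{Admissibility and analytic spread.} Each adic family is admissible with respect to its own ideal, because its extended Rees algebra is $R[Jt,t^{-1}]$ (respectively $R[It,t^{-1}]$) itself. Both families are Noetherian, and $\ell(\mathcal I)$ as defined in the paper is $\dim \mathcal R(I)/\mathfrak m\mathcal R(I)=\ell(I)$. The hypothesis $\ell(I)=\operatorname{ht}I$ therefore gives $\ell(\mathcal I)=k$.

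\emph{Multiplicities.} The step needing the most care is identifying $e(\mathcal I_P^{[k]};R_P)$ with $e(IR_P;R_P)$. For $P\in\operatorname{Min}(I)$, the ideal $IR_P$ is $PR_P$-primary and $\dim R_P=k$. By Corollary~\ref{cor:weighted-strong-rees-adic}, or directly from \eqref{eq:cutkosky-level-limit} with $s=1$ together with $e(I^nR_P)=n^ke(IR_P)$, we get $e(\mathcal I_P^{[k]};R_P)=e(IR_P;R_P)$. The same argument gives $e(\mathcal J_P^{[k]};R_P)=e(JR_P;R_P)$. So the assumed equality $e(IR_P;R_P)=e(JR_P;R_P)$ is exactly the localized hypothesis of Theorem~\ref{thm:boger-graded-families}.

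\emph{Conclusion.} Theorem~\ref{thm:boger-graded-families} then shows that $\mathcal R(\mathcal J)=R[Jt]$ is a finite module over $R[It]$. Finiteness in degree-one-generated algebras gives $J^{r+1}=IJ^r$ for some $r\geq0$: take homogeneous generators of degree at most $r$ and compare the components of degree $n\gg0$. This is exactly the statement that $I$ is a reduction of $J$.
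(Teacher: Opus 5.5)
Your proposal is correct and follows the paper's own proof: you specialize Theorem~\ref{thm:boger-graded-families} to the adic families $\{I^n\}$ and $\{J^n\}$, identify $\ell(\mathcal I)=\ell(I)$ and the localized multiplicities, and translate finiteness of $R[Jt]$ over $R[It]$ back into $J^{r+1}=IJ^r$, which you spell out in more detail than the paper does. The one claim you leave unjustified is that $\dim R_P=k$ for every $P\in\operatorname{Min}(I)$, which is needed for $e(\mathcal I_P^{[k]};R_P)=e(IR_P;R_P)$ to make sense; it follows from $\ell(I)=\operatorname{ht}I$, since Lemma~\ref{lem:equimultiple-homogeneous-reduction} gives a $k$-generated ideal with radical $\sqrt I$, and Krull's height theorem then forces $\operatorname{ht}P\le k$.
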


\begin{proof}
Apply Theorem~\ref{thm:boger-graded-families} to the adic families
$\mathcal I=\{I^n\}_{n\geq0}$ and
$\mathcal J=\{J^n\}_{n\geq0}$.
In this case
$\ell(\mathcal I)=\ell(I)$, $e(\mathcal I_P^{[k]};R_P)=e(IR_P;R_P)$, $e(\mathcal J_P^{[k]};R_P)=e(JR_P;R_P)$,
and reduction of the two adic families is equivalent to reduction of
$I\subseteq J$.
\end{proof}

\subsection*{Declaration of generative AI and AI-assisted technologies}

During the preparation of this work, the authors used OpenAI
ChatGPT as an auxiliary tool for discussion, proof
exploration, and improvements in organization and language. It was also
used to suggest potentially relevant references, which were subsequently
verified against the original sources. All mathematical arguments,
proofs, computations, and citations were independently checked by the
authors, who take full responsibility for the content of the paper.


\section{Acknowledgments}
The first author gratefully acknowledges the Universidade Tecnol\'ogica
Federal do Paran\'a - Campus Guarapuava for the opportunity to carry out his postdoctoral
research. He also acknowledges financial support from FAPESP, grant
2025/20830-5.



\begin{thebibliography}{99}

\bibitem{Bhattacharya1957}
P. B. Bhattacharya,
\emph{The Hilbert function of two ideals},
Math. Proc. Cambridge Philos. Soc. \textbf{53} (1957), 568--575.

\bibitem{Boger1969}
E. B\"oger,
\emph{Eine Verallgemeinerung eines Multiplizit\"atensatzes von D. Rees},
J. Algebra \textbf{12} (1969), 207--215.


\bibitem{CidRuizMontano}
Y. Cid-Ruiz and J. Monta\~no,
\emph{Mixed multiplicities of graded families of ideals},
J. Algebra \textbf{590} (2022), 394--412.

\bibitem{Cutkosky2014}
S. D. Cutkosky,
\emph{Asymptotic multiplicities of graded families of ideals and linear
series},
Adv. Math. \textbf{264} (2014), 55--113.

\bibitem{CutkoskyDivisorial}
S. D. Cutkosky,
\emph{Mixed multiplicities of divisorial filtrations},
Adv. Math. \textbf{358} (2019), 106842.

\bibitem{Cutkosky2013}
S. D. Cutkosky,
\emph{Multiplicities associated to graded families of ideals},
Algebra Number Theory \textbf{7} (2013), no.~9, 2059--2083.


\bibitem{Cutkosky2026}
S. D. Cutkosky,
\emph{Multiplicities of graded families of ideals on Noetherian local rings},
arXiv:2603.06844, 2026.

\bibitem{CutkoskySarkar2022}
S. D. Cutkosky and P. Sarkar,
\emph{Multiplicities and mixed multiplicities of arbitrary filtrations},
Res. Math. Sci. \textbf{9} (2022), Art.~14.

\bibitem{CutkoskySarkarSrinivasan}
S. D. Cutkosky, P. Sarkar and H. Srinivasan,
\emph{Mixed multiplicities of filtrations},
Trans. Amer. Math. Soc. \textbf{372} (2019), no.~9, 6183--6211.

\bibitem{CutkoskySrinivasanVerma}
S. D. Cutkosky, H. Srinivasan and J. K. Verma,
\emph{Positivity of mixed multiplicities of filtrations},
Bull. London Math. Soc. \textbf{52} (2020), 335--348.

\bibitem{HerzogHibiTrung}
J. Herzog, T. Hibi and N. V. Trung,
\emph{Symbolic powers of monomial ideals and vertex cover algebras},
Adv. Math. \textbf{210} (2007), no.~1, 304--322.

\bibitem{KatzVerma1989}
D. Katz and J. K. Verma,
\emph{Extended Rees algebras and mixed multiplicities},
Math. Z. \textbf{202} (1989), 111--128.

\bibitem{MasutiSarkarVerma}
S. K. Masuti, P. Sarkar and J. K. Verma,
\emph{Hilbert polynomials of multigraded filtrations of ideals},
J. Algebra \textbf{444} (2015), 527--566.

\bibitem{Rees1984}
D. Rees,
\emph{Generalizations of reductions and mixed multiplicities},
J. London Math. Soc. (2) \textbf{29} (1984), 397--414.

\bibitem{ReesSharp1978}
D. Rees and R. Y. Sharp,
\emph{On a theorem of B. Teissier on multiplicities of ideals in local
rings},
J. London Math. Soc. (2) \textbf{18} (1978), 449--463.


\bibitem{SarkarVerma}
P. Sarkar and J. K. Verma,
\emph{Local cohomology of multi-Rees algebras, joint reduction numbers and product of complete ideals},
Nagoya Math. J. \textbf{228} (2017), 1--20.

\bibitem{SwansonHuneke}
I. Swanson and C. Huneke,
\emph{Integral Closure of Ideals, Rings, and Modules},
London Math. Soc. Lecture Note Ser. \textbf{336},
Cambridge University Press, Cambridge, 2006.

\bibitem{Trung2001}
N. V. Trung,
\emph{Positivity of mixed multiplicities},
Math. Ann. \textbf{319} (2001), 33--63.

\bibitem{TrungVerma2007}
N. V. Trung and J. K. Verma,
\emph{Mixed multiplicities of ideals versus mixed volumes of polytopes},
Trans. Amer. Math. Soc. \textbf{359} (2007), no.~10, 4711--4727.

\bibitem{TrungVermaSurvey}
N. V. Trung and J. K. Verma,
\emph{Hilbert functions of multigraded algebras, mixed multiplicities
of ideals and their applications},
J. Commut. Algebra \textbf{2} (2010), no.~4, 515--565.

\bibitem{Viet2000}
D. Q. Viet,
\emph{Mixed multiplicities of arbitrary ideals in local rings},
Comm. Algebra \textbf{28} (2000), no.~8, 3803--3821.

\end{thebibliography}
\end{document}